\newtheorem{remark}{Remark}
\newtheorem{assumption}{Assumption}
\newcommand{\R}{\mathbb{R}}
\newcommand{\mP}{\mathcal{P}}
\newcommand{\tr}{\operatorname{tr}}
\renewcommand{\d}[1]{\,\mathrm{d} #1}
\DeclareMathOperator{\meas}{\operatorname{meas}}
\newcommand{\norm}[1]{\left\lVert #1 \right\rVert}
\newcommand{\enormh}[1]{\Vert #1 \Vert_h}
\newcommand{\rr}{\mathbb{R}}
\newcommand{\bff}{\mathbf{f}}
\newcommand{\bu}{\mathbf{u}}
\newcommand{\bc}{\mathbf{c}}
\newcommand{\be}{\mathbf{e}}
\newcommand{\bM}{\mathbf{M}}
\newcommand{\mT}{\mathcal{T}}
\newcommand{\mF}{\mathcal{F}}
\newcommand{\mI}{\mathcal{I}}
\newcommand{\Vtracez}{V_{h,\Theta}^{\Gamma,0}}
\newcommand{\Vk}{V_{h,\Theta}^k}
\newcommand{\Vkz}{V_{h,\Theta}^{k,0}}
\newcommand{\Ik}{I^k}
\newcommand{\Ione}{I^1}
\newcommand{\ITk}{I_{\Theta}^k}
\newcommand{\ITkz}{I_{\Theta}^{k,0}}
\newcommand{\mTGamma}{\mT^{\Gamma}}
\newcommand{\mFGamma}{\mF^{\Gamma}}
\newcommand{\lin}{\text{lin}}
\newcommand{\Gammalin}{\Gamma^{\lin}}
\newcommand{\jump}[1]{[\![#1]\!]}
\newcommand{\OGamma}{\Omega^{\Gamma}}
\newcommand{\hphi}{\hat \phi_h}
\newcommand{\Vregh}{V_{\text{reg},h}}
\newcommand{\nlin}{n_{\rm lin}}
 \newcommand{\bv}{\mathbf v}
\newcommand{\bS}{\mathbf S}
\newcommand{\averageleft}{\{\!\!\{}
\newcommand{\averageright}{\}\!\!\}}
\newcommand{\average}[1]{\averageleft #1 \averageright}
 \newcommand{\TGamma}{{\mathcal T}^{\Gamma}}
\DeclareSIUnit[number-unit-product = {~}]\Q{~}
\DeclareSIPrefix\kilo{K}{3}
\DeclareSIPrefix\mega{M}{6}
\DeclareSIPrefix\terra{K}{9}
\title{Analysis of a high order Trace Finite Element Method for PDEs on level set surfaces}
\author{
J\"org Grande\thanks{Institut f\"ur
Geometrie und Praktische Mathematik, RWTH Aachen University, D-52056 Aachen,
Germany; email: {\tt \{grande,reusken\}@igpm.rwth-aachen.de}}
\and
Christoph Lehrenfeld\thanks{Institut f\"ur Numerische und Angewandte Mathematik,
Georg-August-Universit\"at G\"ottingen, D-37083 G\"ottingen, Germany; email: {\tt lehrenfeld@math.uni-goettingen.de}}
\and
Arnold Reusken\footnotemark[1]
}
\begin{document}
\maketitle
\begin{abstract}
We present a new high order finite element method for the discretization of partial differential equations on stationary smooth surfaces which are implicitly described as the zero level of a level set function. The discretization is based on a trace finite element technique. The higher discretization accuracy is obtained by using an isoparametric mapping of the volume mesh, based on the level set function, as introduced in [C. Lehrenfeld, \emph{High order unfitted finite element methods on level set domains using isoparametric mappings}, Comp. Meth. Appl. Mech. Engrg. 2016]. The resulting trace finite element method is easy to implement. We present an error analysis of this method and derive optimal order $H^1(\Gamma)$-norm error bounds. A second topic of this paper is a unified analysis of several stabilization methods for trace finite element methods. 
Only a stabilization method which is based on adding an anisotropic diffusion in the volume mesh is able to control the condition number of the stiffness matrix also for the case of higher order discretizations. Results of numerical experiments are included which confirm the theoretical findings on optimal order discretization errors and uniformly bounded condition numbers.
\end{abstract}

\begin{keywords} 
trace finite element method,
isoparametric finite element method,
high order methods,
geometry errors,
conditioning,
surface PDEs
 \end{keywords}
 \begin{AMS} 
   58J32, 65N15, 65N22, 65N30
 \end{AMS}


\section[Introduction]{Introduction}\label{sec:introduction}
Recently there has been an increasing interest in \emph{unfitted finite element methods}. These methods offer the possibility to handle complex geometries which are not aligned with a computational (background) mesh. Also the development and analysis of numerical methods for PDEs on (evolving) surfaces is a rapidly growing research area.

The trace finite element method (TraceFEM) \cite{olshanskii2009finite} is an unfitted FEM for PDEs on implicit domains which are described via a level set function. 
In this paper we introduce and analyze a \emph{higher order} TraceFEM for surface PDEs. Furthermore, several \emph{stabilization methods} are studied. The aim of these methods is to obtain condition numbers which are uniformly bounded with respect to the location of the surface in the underlying volume triangulation. 

\subsection*{Literature}
The TraceFEM was introduced in \cite{olshanskii2009finite} for elliptic PDEs on smooth stationary surfaces. 
For piecewise linears, the method has been studied extensively. For stationary surfaces, the conditioning properties of the resulting stiffness matrices are discussed in \cite{reusken10matrixprop}. 
Convection dominated problems are considered in \cite{olshanskii14advection,burman15cutdgimajna}. In \cite{olshanskii14advection} a streamline diffusion stabilization is treated, whereas in \cite{burman15cutdgimajna} a Discontinuous Galerkin formulation is studied.
For PDEs on \emph{evolving} surfaces, space-time formulations of the TraceFEM were first considered in \cite{grande2014movingsurfaces}. A space-time formulation of the TraceFEM is analyzed in \cite{grande2014spacetime,olshanskii14spacetime,olshanskii14spacetimeanalysis} .
In all these publications, only piecewise linears are considered. 

One major issue in the design and realization of high order methods in the context of unfitted finite element methods is the problem of numerical integration on domains which are represented implicitly. Different approaches to deal with this issue exist, cf. the literature overview in \cite{lehrenfeld2015cmame}. 

For surface PDEs on implicit domains, higher order FE methods have first been considered in \cite{demlow2009higher}. In that paper it is crucial that the surface is given as the \emph{zero level of a smooth signed distance function  which is explicitly known}. Based on this distance function a parametric mapping of a shape regular piecewise triangular surface approximation to the zero level of the distance function is constructed which results in a higher order surface representation. In that method the finite element space is explicitly defined with respect to this triangular surface approximation. Hence, it is not a TraceFEM.
In \cite{grande2014highordersjna} a higher order TraceFEM discretization is introduced for PDEs on surfaces which are represented as the zero level of a level set function, which is not necessarily a signed distance function. To this end, 
a \emph{parametric mapping of a piecewise planar interface approximation} 
is constructed based on quasi-normal fields. Both in \cite{demlow2009higher} and \cite{grande2014highordersjna} optimal a-priori error bounds are derived. 
An approach, similar to the one in \cite{grande2014highordersjna}, to enhance the geometry approximation of a piecewise planar interface approximation has recently been introduced in \cite{lehrenfeld2015cmame}. In the latter paper, however, a \emph{parametric mapping of the underlying mesh} is used. The construction of such a mapping is presented in \cite{lehrenfeld2015cmame}, and optimal approximation properties have been derived in \cite{CLARH1} for an elliptic interface model problem. 
The parametric mapping of the underlying mesh allows for a high order approximation of both bulk domains and implicitly defined surfaces/interfaces. Hence this approach can be used to obtain higher order discretizations for interface problems (as in \cite{CLARH1}) as well as for surface-bulk coupled problems with Trace FEM (as considered in \cite{goresaim}).

Different aspects, which are less relevant for the topic of this paper, of high order discretizations on \emph{triangulated surface} are treated in \cite{demlow2009higher,langer2014discontinuousbook,antonietti2015high}.

Related to the conditioning of stiffness matrices in the TraceFEM, we note the following. To improve the conditioning of the stiffness matrices in the TraceFEM, the ``full gradient volume stabilization'' has been considered in \cite{burman16fullgradcmame,reusken2015}. Other techniques known in the literature are the ``ghost penalty stabilization'' \cite{Burman2010,BHL15} and the ``full gradient surface stabilization'' \cite{DER14,reusken2015}. In this paper we study one further method which we call ``normal derivative volume stabilization''. This stabilization has also been proposed in the recent preprint \cite{burmanembedded} (which we were not aware of while studying the method).
As we will explain further on, this method outperforms the other three in the case of higher order trace finite element discretizations.
The relation between the results on this stabilization method presented in this paper and in \cite{burmanembedded} is discussed in Remark~\ref{comextra}. A comparison of all four above-mentioned stabilization methods is given in section~\ref{sec:condition-number}.

\subsection*{Main contributions of this paper}
We use the approach presented in \cite{lehrenfeld2015cmame} to obtain a \emph{higher order isoparametric TraceFEM for surface PDEs}. The method needs as input only a (high order) finite element approximation of the level set function and is easy to implement (in particular, easier than the method treated in \cite{grande2014highordersjna}). In this TraceFEM a finite element space is defined 
on a \emph{transformed} background mesh and a discretization is obtained by restricting the corresponding functions to an (approximated) surface and applying a Galerkin formulation. The isoparametric mapping of the background mesh is the key ingredient for obtaining a higher order discretization, very similar to the standard finite element isoparametric technique for higher order boundary approximation. We present an error analysis for this method and derive \emph{optimal order $H^1$-norm discretization error bounds}. A second main contribution of this paper is concerned with stabilization methods for obtaining condition numbers which are uniformly bounded with respect to the location of the surface in the underlying volume triangulation. We present a \emph{unified framework} for analyzing such methods and treat the recently developed ``\emph{normal derivative volume stabilization}''. The analysis reveals that only the latter method is suitable for higher order trace finite element discretizations.
\subsection*{Structure of the paper}
In section \ref{sec:problem} we recall the weak formulation of the Laplace--Beltrami equation and introduce our assumptions concerning the geometry description based on a level set function. The parametric mapping used to obtain a high order accurate geometry description is introduced in section 
\ref{sec:mapping}. The isoparametric trace FEM is given in section \ref{sec:itracefem}. In that section we introduce a generic stabilization $s_h(\cdot,\cdot)$. In section \ref{sec:error-analysis}
we derive an optimal a-priori discretization error bound in the $H^1$-norm. For this we need two conditions on the stabilization bilinear form $s_h(\cdot,\cdot)$ to hold. 
In section \ref{sec:condition-number} we derive condition number bounds for the stiffness matrix which are robust with respect to the
position of the surface in the computational mesh. In this analysis a third condition for the stabilization bilinear form $s_h(\cdot,\cdot)$ is introduced. It is shown that the three conditions on $s_h(\cdot,\cdot)$ that arise in the analysis are satisfied for certain known stabilization methods applied to \emph{linear} FE discretizations. 
An analysis of  the normal derivative volume stabilization is given in section \ref{sec:new-stab}. This analysis shows that for this method the three conditions on $s_h(\cdot,\cdot)$ are satisfied also for \emph{higher order} trace finite element discretizations.
 Numerical experiments which illustrate the (optimal) higher order of convergence and the conditioning of the corresponding stiffness matrices are provided in section \ref{sec:numex}. A summary and outlook are given in section \ref{sec:conclusion}.

\section{Problem formulation} \label{sec:problem}
Let $\Omega\subseteq\R^3$
be a polygonal domain and $\Gamma \subset \Omega $ a smooth, closed, connected 2D surface.
Given $f\in H^{-1}(\Gamma)$, with $f(1)=0$ we consider the following Laplace--Beltrami equation:
Find $u\in H_\ast^1(\Gamma):= \{\, v \in H^1(\Gamma)~|~\int_\Gamma v \, \d{s}=0 \, \}$ such that
\begin{equation}\label{eq:weak-LB}
  a(u,v)= f(v)\quad\text{for all } v\in H_\ast^1(\Gamma)
\end{equation}
with
\[
  a(u,v)= \int_\Gamma \nabla_\Gamma u \cdot \nabla_\Gamma v \, \d{s}.
\]

\subsection{Geometry description through a level set function} \label{prelim}
We assume that the smooth surface $\Gamma$ is the zero level of a smooth level set function $\phi$, i.e., $\Gamma= \{\, x \in \Omega~|~\phi(x)=0\,\}$.
This level set function is not necessarily close to a distance function, but has the usual properties of a level set function: 
\begin{equation} \label{eq:lsetprops}
\|\nabla \phi(x)\| \sim 1~,~~\|D^2 \phi(x)\| \leq c \quad \text{for all}~x~~ \text{in a neighborhood $U$ of $\Gamma$}.
\end{equation}
We assume that the level set function has the smoothness property $\phi \in C^{k+2}(U)$, where $k$ denotes the polynomial degree of the finite element space introduced below. The assumptions on the level set function \eqref{eq:lsetprops} imply the following relation, which is fundamental in the analysis below:
\begin{equation} \label{eq:lsetdist}
 |\phi(x+\epsilon \nabla \phi(x)) - \phi(x+\tilde \epsilon \nabla \phi(x))| \sim |\epsilon - \tilde \epsilon|, \quad x \in U,
\end{equation}
for $|\epsilon|, |\tilde \epsilon|$ sufficiently small.

We assume a simplicial triangulation of $\Omega$, denoted by $\mT \in \{\mT_h\}_{h>0}$, and the standard finite element space of continuous piecewise polynomials up to degree $k$ by $V_h^k$. 
The nodal interpolation operator in $V_h^k$ is denoted by $\Ik$. 

For ease of presentation we assume quasi-uniformity of the mesh, and $h$ denotes a characteristic mesh size with $h\sim h_T:={\rm diam}(T)$, $~T\in\mT$.

As input for the parametric mapping we need an approximation $\phi_h \in V_h^k$ of $\phi$, and in the error analysis we assume that this approximation satisfies the error estimate
\begin{equation} \label{err2}
  \max_{T\in \mT} |\phi_h - \phi|_{m,\infty,T \cap U} \lesssim h^{k+1-m},\quad 0 \leq m \leq k+1.
\end{equation}
Here $|\cdot|_{m,\infty,T\cap U}$ denotes the usual semi-norm on the Sobolev space $H^{m,\infty}(T\cap U)$ and the constant used in $\lesssim$ depends on $\phi$ but is independent of $h$.
Note that \eqref{err2} implies the estimate
\begin{equation} \label{err1}
 \|\phi_h - \phi\|_{\infty, U} + h\|\nabla(\phi_h - \phi)\|_{\infty, U} \lesssim h^{k+1}. 
\end{equation}
The zero level of the finite element function $\phi_h$ (implicitly) characterizes an approximation of the interface. 
\emph{The piecewise linear nodal interpolation of $\phi_h$ is denoted by $\hphi = \Ione \phi_h$.} Hence, $\hphi(x_i)=\phi_h(x_i)$ at all vertices $x_i$ in the triangulation $\mT$. 
The low order geometry approximation of the interface, which is needed in our discretization method, is the zero level of this function:
\[
  \Gammalin := \{ x\in\Omega\mid \hphi(x) = 0\}.
\]
All elements in the triangulation $\mT$ which are cut by $\Gammalin$ are collected in the set $\mTGamma := \{T \in \mT\mid T \cap \Gammalin \neq \emptyset \}$. The corresponding domain is $\OGamma := \{ x \in T \mid T\in \mTGamma\}$. 
We define the set of facets \emph{inside} $\Omega^\Gamma$, $\mathcal{F}^\Gamma := \{ F = \overline{T}_a \cap \overline{T}_b; T_a,T_b \in \mT^\Gamma, \meas_{2}(F) > 0\}$.

\section{The isoparametric mapping} \label{sec:mapping}
We use the mesh transformation introduced in \cite{lehrenfeld2015cmame} and \cite{CLARH1}. 
We only outline the important ingredients in the construction of the mapping. For details we refer to the thorough discussion in \cite[Section 3]{CLARH1}. 

We first introduce a mapping $\Psi$ on $\OGamma$ with the property $\Psi(\Gammalin)=\Gamma$. Using $G:=\nabla \phi$ a function $d: \OGamma \to \mathbb{R}$ is defined as follows: $d(x)$ is the (in absolute value) smallest number such that
\begin{equation} \label{cond1}
  \phi(x + d(x) G(x))=\hat \phi_h(x) \quad \text{for}~~x \in \OGamma.
\end{equation}
Let $C^l(\TGamma) := \{ v \mid v|_T \in C^l(T),~ T \in \TGamma\},~l \in \mathbb{N}_0,$ be the space of element-wise $C^l$-continuous functions that can be discontinuous across element faces.
In \cite{CLARH1} it is shown that for $h$ sufficiently small the relation in \eqref{cond1} defines a unique 
 $d(x)$ and $d \in C(\OGamma) \cap H^{1,\infty} (\OGamma) \cap C^{k+1}(\TGamma)$. Given the function
 $dG \in [C(\OGamma)]^3 \cap [H^{1,\infty}(\OGamma)]^3$ we define:
\begin{equation} \label{psi1}
 \Psi(x):= x + d(x) G(x), \quad x \in \OGamma.
\end{equation}
In general, e.g., if $\phi$ is not explicitly known, the mapping $\Psi$ is not computable. We introduce an easy to construct accurate approximation of $\Psi$ as follows.
%
%

Let $\mP(T)$ and $\mP(\mathbb{R}^3)$ denote the space of polynomials of degree up to $k$ on $T$ and $\mathbb{R}^3$, respectively. We define the polynomial extension 
$\mathcal{E}_T: \mP(T) \rightarrow \mP(\mathbb{R}^3)$ so that for $v \in V_h^k$ we have $(\mathcal{E}_T v)|_T = v|_T,~T\in\TGamma$.
For a search direction $G_h$ we need a sufficiently accurate approximation of $\nabla \phi$. In this paper we take 
\[ G_h = \nabla \phi_h,
\]
but there are other options.
Given $G_h$ we define a function $d_h: \mTGamma \to [-\delta,\delta]
$, with $\delta > 0$ sufficiently small, as follows: 
$d_h(x)$ is the (in absolute value) smallest number such that
\begin{equation} \label{eq:psihmap}
  \mathcal{E}_T \phi_h(x + d_h(x) G_h(x)) = \hphi(x), \quad \text{for}~~ x\in T \in \mTGamma.
\end{equation}
In the same spirit as above, corresponding to $d_h$ we define
\begin{equation} \label{eq:psih}
  \Psi_h(x) := x + d_h(x) G_h(x), \quad \text{for}~~ x\in T \in \mTGamma,
\end{equation}
which is an approximation of the mapping $\Psi$ in \eqref{psi1}.
The mapping may be discontinuous across facets. We use a simple and well-known projection operator, cf. \cite[Eqs.(25)-(26)]{oswald} and \cite{ernguermond15}, to map $\Psi_h$ into the finite element space. This projection relies on a nodal representation of the finite element space $(V_h^k)_{|\OGamma}$.
The set of finite element nodes $x_i$ in $\TGamma$ is denoted by $N(\TGamma)$, and $N(T)$ denotes the set of finite element nodes associated to $T \in \TGamma $. All elements $T \in \TGamma$ which contain the same finite element node $x_i$ form the set denoted by $\omega(x_i)$, i.e., 
$
\omega(x_i)  := \{ T \in \TGamma | x_i \in N(T) \}, ~ x_i \in N(\TGamma).
$
For each finite element node we define the local average as 
\[
  A_{x_i}(v):= \frac{1}{|\omega(x_i)|} \sum_{T \in \omega(x_i)} v_{|T}(x_i), ~~x_i \in N(\TGamma).
\]
where $|\cdot|$ denotes the cardinality of the set $\omega(x_i)$.
The projection operator $P_h: C(\TGamma)^3 \to (V_h^k)^3$ is defined as
\begin{equation} \label{defP} 
P_h v:= \sum_{x_i \in N(\TGamma)} A_{x_i}(v) \, \psi_i, \quad v \in C(\TGamma)^d,
\end{equation}
where $\psi_i$ is the nodal basis function corresponding to $x_i$. Note that
\[ P_h w = \Ik w ~~\forall~ w \in C(\OGamma)^3, \quad  \|P_h w\|_{\infty,\Omega} \lesssim \max_{T \in\TGamma} \| w\|_{\infty,T}~~\forall~ w \in C(\TGamma)^3.
\]
Based on this projection operator we define
\begin{equation} \label{defThetah} \Theta_h := P_h \Psi_h \in (V_h^k)^3,
\end{equation} 
Note that $k=1$ implies $\phi_h= \hat \phi_h$ and thus $\Theta_h ={\rm id}$. Hence, only $k \geq 2$ is of interest. 
Based on the transformation $\Theta_h$ we define
\begin{equation}
\Gamma_h := \Theta_h(\Gammalin) = \{ x~|~\hat\phi_h \big( \Theta_h^{-1}(x)\big) = 0 \}.
\end{equation}
 \begin{figure}[h!]
   \vspace*{-0.2cm}
   \begin{center}
     \begin{tikzpicture}[scale=1.5]
       \node(Olin) {
     \begin{tikzpicture}[scale=1.5]
       \coordinate (A) at (0.0,0.0);
       \coordinate (B) at (0.7,0.7);
       \coordinate (C) at (1.0,0.0);
       \coordinate (D) at (0.6,-0.6);
       \coordinate (E) at (-0.2,-0.8);
       \coordinate (F) at ($(A)!0.6!(E)$);
       \coordinate (G) at ($(A)!0.5!(D)$);
       \coordinate (H) at ($(A)!0.5!(C)$);
       \coordinate (I) at ($(B)!0.5!(C)$);
       \draw[line width=1pt] (A) -- (B) -- (C) --cycle;
       \draw[line width=1pt] (A) -- (C) -- (D) --cycle;
       \draw[line width=1pt] (A) -- (D) -- (E) --cycle;
       \draw[red!85!black,line width=1.5pt] (F) -- (G) -- (H) -- (I);
       \node[above right] at (I) {\color{red!85!black}$\Gammalin$};
       \node[right] at (D) {\color{black}$\OGamma$};
       \coordinate (J) at (0.625,0.125);
       \coordinate (K) at (0.475,0.275);
       \draw[red!85!black,line width=0.75pt,->] (J) -- (K);
       \node[above right,scale=0.7] at (K) {\!\!\color{red!85!black}$n_{\text{lin}}$};
     \end{tikzpicture}
   };
   \node[right= 2cm of Olin.north east](Oexa) {
     \begin{tikzpicture}[scale=1.5]
       \coordinate (A) at (0.0,0.0);
       \coordinate (B) at (0.7,0.7);
       \coordinate (C) at (1.0,0.0);
       \coordinate (D) at (0.6,-0.6);
       \coordinate (E) at (-0.2,-0.8);
       \coordinate (F) at (-0.2,-0.55);
       \coordinate (I) at (0.92,0.42);
       \draw[line width=1pt] (A) to[in=-160,out=70] (B) to[in=100,out=-30] (C);
       \draw[line width=1pt] (A) to[in=170,out=-12] (C) to[in=60,out=-150] (D);
       \draw[line width=1pt] (A) to[in=150,out=-60] (D) to[in=0,out=-160] (E) to[in=-130,out=95] (A);
       \draw[green!85!black,line width=1.5pt] (F) to[in=-170,out=0] (I);
       \node[above right] at (I) {\color{green!85!black}$\Gamma$};
       \node[right] at (D) {\color{black}$\OGamma_{\Psi}$};
       \coordinate (J) at (0.575,0.215);
       \coordinate (K) at (0.425,0.335);
       \draw[green!85!black,line width=0.75pt,->] (J) -- (K);
       \node[above right,scale=0.7] at (K) {\!\!\color{green!85!black}$n$};
     \end{tikzpicture}
   };
   \node[right = 2cm of Oexa.east, anchor=north west](Oh) {
     \begin{tikzpicture}[scale=1.5]
       \coordinate (A) at (0.0,0.0);
       \coordinate (B) at (0.7,0.7);
       \coordinate (C) at (1.0,0.0);
       \coordinate (D) at (0.6,-0.6);
       \coordinate (E) at (-0.2,-0.8);
       \coordinate (F) at (-0.2,-0.55);
       \coordinate (G) at (0.255,-0.35);
       \coordinate (H) at (0.5,0.04);
       \coordinate (I) at (0.94,0.42);
       \draw[line width=1pt] (A) to[in=-160,out=70] (B) to[in=95,out=-35] (C);
       \draw[line width=1pt] (A) to[in=170,out=10] (C) to[in=73,out=-138] (D);
       \draw[line width=1pt] (A) to[in=150,out=-60] (D) to[in=5,out=-155] (E) to[in=-130,out=95] (A);
       \draw[blue!85!black,line width=1.5pt] (F) to[in=-135,out=5] (G) to[in=-130,out=65] (H) to[out=70,in=-170] (I);
       \node[above right] at (I) {\color{blue!85!black}$\Gamma_h$};
       \node[right] at (D) {\color{black}$\OGamma_{\Theta}$};
       \coordinate (J) at (0.59,0.23);
       \coordinate (K) at (0.425,0.335);
       \draw[blue!85!black,line width=0.75pt,->] (J) -- (K);
       \node[above right,scale=0.7] at (K) {\!\!\color{blue!85!black}$n_h$};
     \end{tikzpicture}
   };
   \draw[line width=0.75pt,->] (Olin.east) to[in=-160,out=-20] node[below] {$\Theta_h$} (Oh.west);
   \draw[line width=0.75pt,->] (Olin.north) to[in=160,out=45] node[above left] {$\Psi$} (Oexa.west);
   \draw[line width=0.75pt,->] (Oexa.east) to[in=130,out=0] node[above right] {$F_h$} (Oh.north);

   \end{tikzpicture}
   \end{center}
   \vspace*{-0.4cm}
   \caption{Sketch of different transformations. $\Psi$ maps the interface approximation $\Gammalin$ onto the exact interface $\Gamma$; $\Theta_h$ is the discrete approximation of $\Psi$. The transformation $F_h := \Theta_h \circ \Psi^{-1}$ has the property $F_h(\Gamma)=\Gamma_h$.}
   \label{fig:trafos}
 \end{figure}

\begin{remark} \label{rem:polext} \rm
The polynomial extension $\mathcal{E}_T$ used in \eqref{eq:psihmap} ensures that the computation of $d_h|_T$ only depends on $\phi_h|_T$, i.e. element-local quantities. This avoids searches in a neighborhood of the element, which enhances the computational efficiency, especially in case of a parallel implementation.
\end{remark}

A key result of the error analysis in \cite{CLARH1} is summarized in the following lemma.
\begin{lemma} \label{lem:phih} The following estimates hold:
\begin{align} 
\Vert \Theta_h - \Psi \Vert_{\infty,\OGamma} + h \Vert D (\Theta_h - \Psi)\Vert_{\infty,\OGamma} & \lesssim h^{k+1},
\label{eq:phih1} \\
\Vert {\rm id} - \Psi \Vert_{\infty,\OGamma} + h \Vert I - D\Psi\Vert_{\infty,\OGamma} & \lesssim h^2. \label{eq:phi2}
\end{align}
\end{lemma}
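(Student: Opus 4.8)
The plan is to prove the two estimates separately: first the coarse bound \eqref{eq:phi2} for the exact map $\Psi$, and then to bootstrap to \eqref{eq:phih1} by comparing $\Theta_h$ with $\Psi$ through the intermediate map $\Psi_h$.

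For \eqref{eq:phi2}, since ${\rm id} - \Psi = -\,dG$ with $\|G\| = \|\nabla\phi\|\sim 1$, it suffices to bound $\|d\|_{\infty,\OGamma}$. From $\phi(x) = \phi(x+0\cdot G(x))$, the defining relation $\phi(x+d(x)G(x)) = \hphi(x)$, and the fundamental relation \eqref{eq:lsetdist}, one gets $|d(x)|\sim|\hphi(x)-\phi(x)|$, and $\|\hphi - \phi\|_{\infty,U}\le\|\Ione\phi_h-\phi_h\|_\infty + \|\phi_h-\phi\|_\infty\lesssim h^2$: the second term by \eqref{err1}, the first by the element-wise estimate $\|\Ione\phi_h-\phi_h\|_{\infty,T}\lesssim h^2|\phi_h|_{2,\infty,T}$ together with $|\phi_h|_{2,\infty,T}\lesssim 1$, which follows from \eqref{err2} (at $m=2$) and \eqref{eq:lsetprops}. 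For the derivative, $D\Psi = I + G\,(\nabla d)^{T} + d\,D^2\phi$, so $\|I - D\Psi\|_\infty\lesssim\|\nabla d\|_\infty + \|d\|_\infty$ and only $\|\nabla d\|_\infty\lesssim h$ is left; differentiating \eqref{cond1} gives $\big(G(x)^{T}\nabla\phi(\Psi(x))\big)\nabla d(x) = \nabla\hphi(x) - \nabla\phi(\Psi(x)) - d(x)D^2\phi(x)\nabla\phi(\Psi(x))$, where the scalar prefactor equals $\|\nabla\phi(x)\|^2 + O(h^2)$ (since $|\Psi(x)-x|\lesssim h^2$) and is thus bounded below, while $\|\nabla\hphi-\nabla\phi(\Psi)\|_\infty\le\|\nabla\hphi-\nabla\phi\|_\infty + \|\nabla\phi-\nabla\phi(\Psi)\|_\infty\lesssim h$.

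For \eqref{eq:phih1}, I would split $\Theta_h - \Psi = (P_h\Psi_h - \Psi_h) + (\Psi_h - \Psi)$. In $\Psi_h - \Psi = (d_h-d)G + d_h(G_h-G)$ the last term is $O(h^{k+2})$, because $\|G_h-G\|_\infty = \|\nabla\phi_h-\nabla\phi\|_\infty\lesssim h^k$ by \eqref{err2} and $\|d_h\|_\infty\lesssim h^2$ (by the same argument as for $d$); hence only $\|d_h-d\|_\infty\lesssim h^{k+1}$ is needed. From $\phi(\Psi(x)) = \hphi(x) = \mathcal{E}_T\phi_h(\Psi_h(x))$ one obtains $\phi(\Psi(x)) - \phi(\Psi_h(x)) = \mathcal{E}_T\phi_h(\Psi_h(x)) - \phi(\Psi_h(x))$; since $\Psi_h(T)$ lies in an $O(h)$-neighborhood of $T$, a polynomial inverse (extension) estimate together with \eqref{err2} gives $\|\mathcal{E}_T\phi_h - \phi\|_\infty\lesssim h^{k+1}$ there (and $\lesssim h^k$ for the gradient), so the right-hand side is $\lesssim h^{k+1}$; on the left, $\Psi(x)-\Psi_h(x) = (d(x)-d_h(x))G(x) + O(h^{k+2})$ is, up to the negligible term, parallel to $\nabla\phi(x)$, so \eqref{eq:lsetdist} near $\Gamma$ yields $|\phi(\Psi(x))-\phi(\Psi_h(x))|\sim|d(x)-d_h(x)|$, whence $\|d_h-d\|_\infty\lesssim h^{k+1}$ and $\|\Psi_h-\Psi\|_\infty\lesssim h^{k+1}$; differentiating the same identity gives $\|D(\Psi_h-\Psi)\|_\infty\lesssim h^k$. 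For the first term, $\Psi\in C(\OGamma)^3$ implies $P_h\Psi = \Ik\Psi$, so $P_h\Psi_h - \Psi_h = P_h(\Psi_h-\Psi) + (\Ik\Psi-\Psi) + (\Psi-\Psi_h)$, and $L^\infty$-stability of $P_h$ plus an inverse inequality handle the first summand at orders $h^{k+1}$, $h^k$, the element-wise interpolation estimate $\|D^j(\Ik\Psi-\Psi)\|_{\infty,T}\lesssim h^{k+1-j}|\Psi|_{k+1,\infty,T}$ ($j=0,1$) together with boundedness of $|d|_{k+1,\infty,T}$ (from $\phi\in C^{k+2}(U)$ and $d\in C^{k+1}(\TGamma)$) handle the second, and the third was just estimated. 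Summing yields \eqref{eq:phih1}.

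The hard part is the cluster of estimates around the discrete map $\Psi_h$. One must show that the element-local polynomial extension $\mathcal{E}_T\phi_h$ still approximates $\phi$ to the full orders $h^{k+1}$ and $h^k$ not merely on $T$ but on the $O(h)$-neighborhood of $T$ that $\Psi_h$ reaches — a polynomial inverse estimate on a slightly enlarged reference element — and one must carry out the implicit-function comparison of $d$ and $d_h$ even though their search directions $\nabla\phi$ and $\nabla\phi_h$ differ by $O(h^k)$, verifying that this discrepancy enters the error only multiplied by $d_h = O(h^2)$ and so does not degrade the rate. Once these are in place, the behaviour of the averaging operator $P_h$ on the merely element-wise smooth $\Psi_h$ is routine. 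The complete argument is carried out in \cite[Section~3]{CLARH1}.
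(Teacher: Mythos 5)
Your proposal is correct and follows the same route as the paper, which proves Lemma~\ref{lem:phih} simply by citing \cite[Corollary 3.2 and Lemma 3.6]{CLARH1}; your sketch is a faithful reconstruction of that reference's argument (bounding $d$ and $\nabla d$ via \eqref{eq:lsetdist} and implicit differentiation of \eqref{cond1}, comparing $d$ with $d_h$ through the two defining relations and the extension estimate for $\mathcal{E}_T\phi_h$ on an $\mathcal{O}(h)$-neighborhood of $T$, and handling $P_h\Psi_h-\Psi_h$ via the stability of $P_h$ and the interpolation error for $\Psi$). You also correctly identify where the technical effort lies, so nothing essential is missing.
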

\begin{proof}
 \cite[Corollary 3.2 and Lemma 3.6]{CLARH1}.
\end{proof}

We emphasize that the constants hidden in the $\lesssim$ notation in \eqref{eq:phih1}, \eqref{eq:phi2}, and also in the estimates in the remainder, do not depend on how $\Gamma^{\rm lin}$ intersects the triangulation $\mT^\Gamma$.
We define the transformed cut mesh domains $\OGamma_{\Theta} := \Theta_h(\OGamma)$, $\OGamma_{\Psi} := \Psi(\OGamma)$, cf. Fig.~\ref{fig:trafos}.
The results in Lemma~\ref{lem:phih} imply that, for $h$ sufficiently small, both $\Theta_h:\, \OGamma \to \OGamma_{\Theta}$ and $\Psi:\, \OGamma \to \OGamma_{\Psi}$ are homeomorphisms.
Furthermore, using \eqref{eq:phih1} one easily derives (\cite[Lemma 3.7]{CLARH1}):
\begin{equation} \label{resdist}
 {\rm dist}(\Gamma_h,\Gamma) \lesssim h^{k+1}.
\end{equation}
For the analysis we also need a result on the approximation of normals in a neighborhood of $\Gamma$. Let $n(x)$, $x \in \Gamma$ be the unit normal to $\Gamma$ (in the direction of $\phi > 0$). In a (sufficiently small) neighborhood of $\Gamma$ we define $n(x):= \nabla \phi / \Vert \nabla \phi \Vert_2$. In case that $\phi$ is a signed distance function this coincides with $n(x) = n(p(x))$ where $p$ is the closest point projection on $\Gamma$.
In the following lemma we consider a computable accurate approximation of $n(x)$.
\begin{lemma} \label{Lemnormals}
 For $x \in T \in \TGamma$ define 
\[ \nlin= \nlin(T):= \frac{\nabla \hat \phi_h(x)}{\|\nabla \hat \phi_h(x)\|_2} = \frac{\nabla \hat\phi_{h|T}}{\|\nabla \hat \phi_{h|T}\|_2}, \quad n_h(\Theta_h(x)):= \frac{D\Theta_h(x)^{-T}\nlin}{\|D\Theta_h(x)^{-T}\nlin\|_2}.
\] 
Let $n_{\Gamma_h}(x)$, $x\in \Gamma_h$ a.e., be the unit normal on $\Gamma_h$ (in the direction of $\phi_h > 0$). The following holds
\begin{align}
 \|n_h - n\|_{\infty, \OGamma_{\Theta}} & \lesssim h^k \label{normalres1} \\
 \|n_{\Gamma_h} - n\|_{\infty, \Gamma_h} & \lesssim h^k. \label{normalres2} 
\end{align}
\end{lemma}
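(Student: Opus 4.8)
The plan is to establish \eqref{normalres1} first and then derive \eqref{normalres2} from it together with the distance estimate \eqref{resdist}. For \eqref{normalres1}, the key observation is that $n_h \circ \Theta_h$ is, up to normalization, the pushforward $D\Theta_h^{-T} \nlin$ of the discrete normal $\nlin$ on $\Gammalin$, while the exact normal $n = \nabla\phi/\|\nabla\phi\|$ is (up to normalization) the pushforward $D\Psi^{-T} n_{\Gammalin}$ of the \emph{same} field $\nlin$ under the exact map $\Psi$, because $\Psi$ maps $\Gammalin$ to $\Gamma$ and $d$ in \eqref{cond1} is built along the direction $G = \nabla\phi$. More precisely, I would write $\nlin = \nabla\hphi/\|\nabla\hphi\|$ and compare $D\Theta_h^{-T}\nlin$ with $D\Psi^{-T}\nlin$, using that $D\Theta_h$ and $D\Psi$ are both $O(1)$-perturbations of the identity (by \eqref{eq:phih1}, \eqref{eq:phi2}) with uniformly bounded inverses, and that $\|D(\Theta_h - \Psi)\|_{\infty,\OGamma} \lesssim h^k$ from \eqref{eq:phih1}. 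A standard Lipschitz-continuity argument for the map $v \mapsto v/\|v\|_2$ away from the origin then gives $\|n_h\circ\Theta_h - (\text{normalized }D\Psi^{-T}\nlin)\|_{\infty,\OGamma} \lesssim h^k$.

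It then remains to compare the normalized field $D\Psi^{-T}\nlin$ with $n\circ\Psi$ on $\OGamma$. Here I would use that $\nlin = \nabla\hphi/\|\nabla\hphi\|$ approximates $\nabla\phi/\|\nabla\phi\| \circ \id$ only to order $h$ (since $\hphi$ is the \emph{piecewise linear} interpolant of $\phi_h$), so a naive estimate loses too much. The correct route is to exploit that $D\Psi^{-T}$ acts exactly so as to map the gradient of $\hphi$ to (a multiple of) the gradient of $\hphi\circ\Psi^{-1}$; since $\Psi(\Gammalin)=\Gamma$ and $\phi\circ\Psi = c\,\hphi$ with $c = \phi/\hphi$ bounded away from $0$ on a neighborhood (this is essentially \eqref{cond1} differentiated, see \cite{CLARH1}), one gets that $D\Psi(x)^{-T}\nabla\hphi(x)$ is parallel to $\nabla\phi(\Psi(x))$ up to a scalar, hence after normalization equals $\pm n(\Psi(x))$ exactly. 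With the orientation fixed consistently ($\phi>0$ side), this comparison contributes \emph{no} error, so \eqref{normalres1} follows with the rate $h^k$ coming solely from the $\Theta_h$–$\Psi$ discrepancy. Transforming back via $x \mapsto \Theta_h^{-1}(x)$ (a homeomorphism, $O(h^{k+1})$-close to $\id$) turns the bound on $\OGamma$ into the bound on $\OGamma_\Theta$.

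For \eqref{normalres2}, note that $n_{\Gamma_h}$ is the unit normal of $\Gamma_h = \Theta_h(\Gammalin)$, which coincides (up to sign) with the trace of the field $n_h$ on $\Gamma_h$ by the very definition of $n_h(\Theta_h(x))$ in terms of $D\Theta_h^{-T}\nlin$: pushing a co-normal field by $D\Theta_h^{-T}$ is exactly the transformation rule for surface normals under $\Theta_h$. Hence on $\Gamma_h$ we have $n_{\Gamma_h} = n_h|_{\Gamma_h}$ (with matching orientation), and then \eqref{normalres2} is immediate from \eqref{normalres1} since $\Gamma_h \subset \OGamma_\Theta$. Alternatively, if one prefers to avoid identifying the two fields, one compares $n_{\Gamma_h}(x)$ with $n(x)$ for $x \in \Gamma_h$ by first going to the nearest point $p(x) \in \Gamma$ (with $|x - p(x)| \lesssim h^{k+1}$ by \eqref{resdist}), using $\|D^2\phi\| \lesssim 1$ to control $\|n(x) - n(p(x))\| \lesssim h^{k+1}$, and then using the tangential-gradient/normal relation between $\Gamma$ and $\Gamma_h = F_h(\Gamma)$ with $\|DF_h - I\| \lesssim h^k$; this again yields the $h^k$ rate.

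The main obstacle is the second paragraph: one must resist estimating $\|\nlin - n\|_\infty$ directly (which is only $O(h)$) and instead recognize that the isoparametric construction is designed so that the \emph{pushforward} $D\Psi^{-T}\nlin$ is, after normalization, the exact normal $n$ — the geometry error enters only through $\Theta_h - \Psi$, not through $\hphi - \phi$. Making this cancellation precise requires carefully differentiating the defining relation \eqref{cond1} (equivalently, invoking the corresponding lemma of \cite{CLARH1}) and keeping track of orientations so that the sign in $\pm n$ is fixed by the $\phi > 0$ / $\phi_h > 0$ convention.
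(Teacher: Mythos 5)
Your proposal is correct and follows essentially the same route as the paper: the crucial point in both is that $n(\Psi(x))$ equals the \emph{normalized} $D\Psi(x)^{-T}\nlin$ exactly (the paper obtains this from the fact that $\Psi$ maps the level sets of $\hphi$ onto those of $\phi$, you obtain it by differentiating $\phi\circ\Psi=\hphi$ from \eqref{cond1} --- the same fact), so that the $h^k$ rate comes solely from $\|D(\Theta_h-\Psi)\|_\infty\lesssim h^k$ plus an $h^{k+1}$ term from evaluating $n$ at $\Theta_h(x)$ versus $\Psi(x)$, and \eqref{normalres2} reduces to \eqref{normalres1} because $n_{\Gamma_h}$ coincides with $n_h$ on $\Gamma_h$ by the co-normal transformation rule.
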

\begin{proof} 
 Define the isosurface $\Gammalin_c:= \{\, x \in \OGamma~|~\hat \phi_h(x)=c \,\}$ (not necessarily connected) and its image $\Gamma_{h,c}:= \{\, \Theta_h(x)~|~x \in \Gammalin_c\,\}$. Note that $\Gamma_h=\Gamma_{h,0}$. Take $x \in T \in \TGamma$ and $c$ such that $x \in \Gammalin_c$. The unit normal on $\Gammalin_c$ at $x$ is given by $\nlin$. The unit normal on $\Gamma_{h,c}$ at $\Theta_h(x)$ is given by $n_h(\Theta_h(x))$. Hence, for $y=\Theta_h(x) \in \Gamma_h$ we have $n_{\Gamma_h}(y) - n(y)= n_h(\Theta_h(x))-n(\Theta_h(x))$, and thus \eqref{normalres2} follows from \eqref{normalres1}. Let $\Gamma_c:=\{ x \in \OGamma_{\Psi}~|~ \phi(x)=c \,\}$ be the $c$-isosurface of $\Gamma$. The definition of $\Psi$ implies that
$\Gamma_c= \{\,\Psi(x)~|~x \in \Gammalin_c\,\}$. Thus we get 
\[ n(\Psi(x))=\frac{D\Psi(x)^{-T}\nlin}{\|D\Psi(x)^{-T}\nlin\|_2}. \]
Using this and the result in Lemma~\ref{lem:phih} we get (uniformly in $x$ and $T$):
\begin{align*}
 \|n_h(\Theta_h(x)) - n(\Theta_h(x))\|_2 & \leq \|n_h(\Theta_h(x)) - n(\Psi(x))\|_2 +\|n(\Theta_h(x))- n(\Psi(x))\|_2 \\
 & \lesssim \left\| \frac{D\Theta_h(x)^{-T}\nlin}{\|D\Theta_h(x)^{-T}\nlin\|_2} - \frac{D\Psi(x)^{-T}\nlin}{\|D\Psi(x)^{-T}\nlin\|_2} \right\|_2 + h^{k+1} \\
 & \lesssim\frac{ \Vert \big(D\Theta_h(x)^{-T}- D\Psi(x)^{-T} \big)\nlin \Vert_2}{\|D\Psi(x)^{-T}\nlin\|_2} + h^{k+1} \lesssim h^k.
\end{align*}
In the last inequality we used \eqref{eq:phi2} and \eqref{eq:phih1}. This proves \eqref{normalres1}.
\end{proof}

One further property that we need in the analysis is the uniform local regularity of the mapping $\Theta_h$ that we will show in Lemma \ref{lemregu}. As a preliminary we give the following lemma.

\begin{lemma}\label{lem:dh}
For $h$ sufficiently small, $T\in\mT^\Gamma$, and $F\in\mathcal{F}^\Gamma$, the functions $d_h$ and $\Psi_h$ defined in \eqref{eq:psihmap} and \eqref{eq:psih} have the properties
\begin{subequations}
\begin{align}
 \max_{T\in\mTGamma} \Vert d_h \Vert_{H^{l,\infty}(T)} \lesssim 1, ~ & ~ \max_{T\in\mTGamma} \Vert \Psi_h \Vert_{H^{l,\infty}(T)} \lesssim 1, \qquad l \leq k+1,
\label{eq:psihbound}\\
 \max_{F \in \mFGamma} \Vert \jump{d_h}_F \Vert_{\infty,F} \lesssim h^{k+1}, ~ & ~
 \max_{F \in \mFGamma} \Vert \jump{\Psi_h}_F \Vert_{\infty,F} \lesssim h^{k+1},
\label{eq:jumpbound}
\end{align}
where $\jump{\cdot}_F$ denotes the usual jump operator across the facet $F$.
\end{subequations}
\end{lemma}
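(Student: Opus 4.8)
The plan is to derive both the interior bounds \eqref{eq:psihbound} and the facet-jump bounds \eqref{eq:jumpbound} from the defining relation \eqref{eq:psihmap} together with the level set assumptions \eqref{eq:lsetprops}, \eqref{eq:lsetdist} and the approximation property \eqref{err2}. The key structural observation is that on a single element $T$ the function $d_h|_T$ is defined implicitly, by an \emph{element-local} equation, via the polynomial extension $\mathcal{E}_T\phi_h$; this is exactly the point stressed in Remark~\ref{rem:polext}, and it means the regularity of $d_h|_T$ is governed entirely by the smoothness of the polynomial $\mathcal{E}_T\phi_h$ and by a quantitative implicit-function argument.

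First I would establish the interior estimate \eqref{eq:psihbound}. Fix $T\in\mTGamma$ and write $g(x,t):=\mathcal{E}_T\phi_h(x+tG_h(x))-\hphi(x)$, so that $g(x,d_h(x))=0$. Since $G_h=\nabla\phi_h$ and, by \eqref{err1}, $\|\nabla\phi_h-\nabla\phi\|_{\infty}\lesssim h^k$, we have $\|G_h\|_\infty\sim 1$ on $U$ for $h$ small, and $\partial_t g(x,t) = G_h(x)\cdot\nabla(\mathcal{E}_T\phi_h)(x+tG_h(x))$, which for $|t|\le\delta$ small is close to $\|\nabla\phi(x)\|^2\sim 1$, so it is bounded away from zero uniformly in $x$, $T$ and $h$. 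Hence the implicit function theorem applies with uniform constants, giving $d_h|_T\in C^{k+1}(T)$ and, by successively differentiating $g(x,d_h(x))=0$ and using that all derivatives of $\mathcal{E}_T\phi_h$ up to order $k+1$ on the relevant (bounded) region are $\lesssim 1$ (this follows from \eqref{err2} with $m\le k+1$ plus an inverse estimate controlling the polynomial extension $\mathcal{E}_T$, noting that $\|\hphi\|_{H^{l,\infty}(T)}$ for $l\ge 2$ is likewise bounded using $\|D^2\phi\|\lesssim 1$ and the interpolation error), one obtains $\|d_h\|_{H^{l,\infty}(T)}\lesssim 1$ for $l\le k+1$. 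The bound on $\Psi_h=\id+d_h G_h$ then follows from the product rule together with $\|G_h\|_{H^{l,\infty}(T)}=\|\nabla\phi_h\|_{H^{l,\infty}(T)}\lesssim 1$ for $l\le k$.

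Next I would prove the jump estimate \eqref{eq:jumpbound}. Let $F=\overline{T}_a\cap\overline{T}_b$ be an interior facet and $x\in F$. On $T_a$ the value $d_h^a:=d_h|_{T_a}(x)$ solves $\mathcal{E}_{T_a}\phi_h(x+d_h^a G_h(x))=\hphi(x)$ and similarly for $T_b$; note $G_h(x)=\nabla\phi_h(x)$ and $\hphi(x)$ are single-valued on $F$ (continuity of $\phi_h$ and of $\hphi$), so the only source of the jump is the different polynomial extensions. The idea is to compare both with the value $d(x)$ defined by the exact relation \eqref{cond1}, $\phi(x+d(x)G(x))=\hphi(x)$. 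Subtracting, one writes $\phi(x+d(x)G(x)) - \mathcal{E}_{T_a}\phi_h(x+d_h^a G_h(x)) = 0$ and splits this into (i) the consistency error $\phi - \mathcal{E}_{T_a}\phi_h$ evaluated at nearby points, which on $T_a\cap U$ is $\lesssim h^{k+1}$ by \eqref{err2} (with an inverse estimate to pass from $\phi_h$ on $T_a$ to $\mathcal{E}_{T_a}\phi_h$ on the $\delta$-neighborhood), (ii) the difference $G-G_h=\nabla(\phi-\phi_h)$, which is $\lesssim h^k$ but enters multiplied by the factor $d(x)$, and here the crucial point is that $|d(x)|\lesssim h^{k+1}$ because $|\phi(x)-\hphi(x)| = |\phi(x)-\Ione\phi_h(x)|\lesssim |\phi(x)-\phi_h(x)| + |\phi_h(x)-\Ione\phi_h(x)| \lesssim h^{k+1} + h^2\|D^2\phi_h\|$ — wait, this needs care for $k\ge 2$; instead one uses $|\hphi(x)| = |\hphi(x)-\phi(x)|$ on $\Gammalin$ and more directly the bound from \cite{CLARH1} that $\|d\|_{\infty,\OGamma}\lesssim h^2$, which combined with the $\delta$-smallness suffices — and (iii) the linearization error from the nonlinearity of $\phi$, controlled by the Lipschitz bound on $\nabla\phi$. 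Dividing by $\partial_t g\sim 1$ yields $|d_h^a(x)-d(x)|\lesssim h^{k+1}$, and the same for $d_h^b$, whence $|\jump{d_h}_F(x)|\le|d_h^a(x)-d(x)|+|d(x)-d_h^b(x)|\lesssim h^{k+1}$. The jump bound for $\Psi_h$ is then immediate since $\jump{\Psi_h}_F = \jump{d_h}_F\, G_h$ (as $\id$ and $G_h=\nabla\phi_h$ are continuous across $F$) and $\|G_h\|_\infty\lesssim 1$.

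I expect the main obstacle to be making the quantitative implicit-function argument genuinely uniform in $T$ and $h$ while controlling the polynomial extension $\mathcal{E}_T\phi_h$: one must be careful that all the relevant evaluations $x+t G_h(x)$ stay within a fixed-size neighborhood where $\mathcal{E}_T\phi_h$ is a good approximation of $\phi$ (this is why $\delta$ is chosen small and fixed), and that the inverse/extension estimates for high-degree polynomials do not hide an $h$-dependence. A secondary subtlety is bootstrapping the higher derivatives $l=2,\dots,k+1$ of $d_h$: each differentiation of the implicit relation produces lower-order derivatives of $d_h$ (already controlled by induction) times derivatives of $\mathcal{E}_T\phi_h$, divided by the non-degenerate factor $\partial_t g$, so the induction closes, but writing the Faà di Bruno bookkeeping cleanly requires some care. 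Since the analogous estimates for the \emph{exact} objects $d$, $\Psi$ are already available in \cite{CLARH1}, the cleanest exposition is probably to state the argument for $d_h$ by direct analogy, emphasizing only the two new ingredients: element-locality of $\mathcal{E}_T$ (for the interior bounds) and the comparison with the single-valued exact quantity $d$ (for the jumps).
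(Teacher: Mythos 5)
Your treatment of \eqref{eq:psihbound} is essentially the paper's argument: the same quantitative implicit function theorem applied to $F(x,y)=\mathcal{E}_T\phi_h(x+yG_h(x))-\hphi(x)$ with $D_yF$ uniformly bounded away from zero, followed by bootstrapping the higher derivatives of $d_h$ from derivatives of $F$. For the jump of $d_h$ you take a slightly different but legitimate route (triangle inequality through the exact $d$ from \eqref{cond1}, rather than the paper's direct comparison of $d_h^1$ and $d_h^2$ via \eqref{eq:lsetdist}); the wobble about whether $|d|\lesssim h^{k+1}$ is harmless, since $\|d\|_{\infty}\lesssim h^2$ suffices to make the term $d\cdot(G-G_h)$ of size $h^{2+k}\le h^{k+1}$.

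There is, however, a genuine error: you assert that $G_h=\nabla\phi_h$ is single-valued on an interior facet ``by continuity of $\phi_h$.'' This is false --- $\phi_h\in V_h^k$ is only $C^0$, so its gradient jumps across facets (the tangential part is continuous, the normal part is not). This has two consequences. First, in your jump argument the two neighboring elements use different search directions $G_h^a\neq G_h^b$, so the jump of $d_h$ is not caused solely by the different polynomial extensions; this is repairable because $\|G_h^i-\nabla\phi\|_\infty\lesssim h^k$ on both sides, so the discrepancy enters multiplied by $|d_h^i|\lesssim h^2$ and contributes only $O(h^{k+2})$. Second, and more seriously, your one-line conclusion $\jump{\Psi_h}_F=\jump{d_h}_F\,G_h$ is not valid. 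The correct decomposition is
\begin{equation*}
\jump{d_h G_h}=\jump{d_h}\average{G_h}+\average{d_h}\jump{G_h},
\end{equation*}
and one must additionally bound the second term using $|\jump{G_h}|\lesssim h^k$ (from \eqref{err1} and the continuity of $\nabla\phi$) together with $|\average{d_h}|\lesssim h^2$, giving an $O(h^{k+2})$ contribution. Without this extra step the bound on $\jump{\Psi_h}_F$ is not established.
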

\begin{proof}
Similar results are derived in \cite{CLARH1}, e.g.  \cite[Lemma 3.4]{CLARH1}. We include  a proof in Appendix \ref{sec:proof:lem:dh}.
\end{proof}
\ \\
Note that the constants hidden in $\lesssim$ in Lemma~\ref{lem:dh} depend on higher derivatives of the level set function $\phi$ (and thus on the smoothness of $\Gamma$), for example, the constants in \eqref{eq:psihbound} depend on $\|\phi\|_{H^{k+2,\infty}(\OGamma)}$. 
\begin{lemma}\label{lemregu}
  The following holds: $\max_{T\in\mTGamma} \Vert \Theta_h \Vert_{H^{l,\infty}(T)} \lesssim 1$, $l \leq k+1.$
\end{lemma}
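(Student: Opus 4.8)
The plan is to compare $\Theta_h = P_h \Psi_h$ with $\Psi_h$ element by element, using that on a fixed $T\in\mTGamma$ the difference $q_T := (\Theta_h - \Psi_h)|_T$ is a polynomial of degree $\le k$ whose nodal values are controlled by the facet jumps of $\Psi_h$ from Lemma~\ref{lem:dh}. Once $\|q_T\|_{\infty,T}\lesssim h^{k+1}$ is established, a scaled inverse inequality converts this into a bound on all derivatives, and the smooth-looking part $\Psi_h$ is handled directly by Lemma~\ref{lem:dh}.

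First I would fix $T\in\mTGamma$ and write $\Theta_h|_T = \Psi_h|_T + q_T$. By \eqref{eq:psihbound} we have $\|\Psi_h\|_{H^{l,\infty}(T)}\lesssim 1$ for $l\le k+1$, so it remains to estimate $\|q_T\|_{H^{l,\infty}(T)}$. Since $\Theta_h|_T\in(\mP(T))^3$ and $\Psi_h|_T\in(\mP(T))^3$, also $q_T\in(\mP(T))^3$, hence $|q_T|_{H^{m,\infty}(T)}=0$ for $m>k$ and, by the scaled inverse inequality, $|q_T|_{H^{m,\infty}(T)}\lesssim h^{-m}\|q_T\|_{\infty,T}$ for $m\le k$, with constants depending only on $k$ and the shape regularity of $\mT$. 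Therefore it suffices to show $\|q_T\|_{\infty,T}\lesssim h^{k+1}$, which then gives $\|q_T\|_{H^{l,\infty}(T)}\lesssim \sum_{m=0}^{\min(l,k)} h^{-m}h^{k+1}\lesssim 1$ for $l\le k+1$, and hence the claim.

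To bound $\|q_T\|_{\infty,T}$: as $q_T$ is a polynomial of degree $\le k$, it equals its own nodal interpolant on $T$, so by uniform boundedness of the Lagrange basis on $T$ we get $\|q_T\|_{\infty,T}\lesssim \max_{x_i\in N(T)}|q_T(x_i)|$. For $x_i\in N(T)$ the definition \eqref{defP} of $P_h$ yields $q_T(x_i) = A_{x_i}(\Psi_h) - \Psi_h|_T(x_i) = \frac{1}{|\omega(x_i)|}\sum_{T'\in\omega(x_i)} \big(\Psi_h|_{T'}(x_i) - \Psi_h|_T(x_i)\big)$. For each $T'\in\omega(x_i)$ there is a chain $T=T_0,T_1,\dots,T_j=T'$ in $\omega(x_i)$ such that consecutive elements share a facet $F_l\in\mFGamma$ with $x_i\in F_l$, and $j$ is bounded by the (uniformly bounded) cardinality of $\omega(x_i)$; telescoping and using the jump estimate \eqref{eq:jumpbound} gives $|\Psi_h|_{T'}(x_i) - \Psi_h|_T(x_i)| \le \sum_l \|\jump{\Psi_h}_{F_l}\|_{\infty,F_l} \lesssim h^{k+1}$. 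Hence $|q_T(x_i)|\lesssim h^{k+1}$ and $\|q_T\|_{\infty,T}\lesssim h^{k+1}$, which finishes the proof.

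I expect the only mildly delicate point to be the combinatorial chain argument around a node $x_i$ — connecting any $T'\in\omega(x_i)$ to $T$ through facets of $\mFGamma$ containing $x_i$ in a uniformly bounded number of steps; this is exactly where quasi-uniformity and shape regularity of $\mT$ enter and guarantee that the constants are independent of how $\Gammalin$ cuts the mesh. An alternative to the jump step is to invoke $\|\Theta_h-\Psi\|_{\infty,\OGamma}\lesssim h^{k+1}$ from Lemma~\ref{lem:phih} together with $\|\Psi-\Psi_h\|_{\infty,\OGamma}\lesssim h^{k+1}$ (established as in \cite{CLARH1}) to obtain $\|q_T\|_{\infty,T}\lesssim h^{k+1}$ directly, after which the inverse-estimate argument is identical; this route is close in spirit to the proof of Lemma~\ref{lem:dh}.
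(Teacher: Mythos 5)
Your overall strategy (reduce everything to a sup-norm bound on a polynomial correction at the nodes, then convert to derivative bounds by an inverse estimate) is essentially the paper's, and your nodal computation $q_T(x_i)=A_{x_i}(\Psi_h)-\Psi_h|_T(x_i)$ together with the telescoping over facet jumps from \eqref{eq:jumpbound} is exactly the core of the paper's argument. But there is a genuine gap at the first step: you assert that $\Psi_h|_T\in(\mP(T))^3$, and this is false. By \eqref{eq:psih}, $\Psi_h(x)=x+d_h(x)G_h(x)$ where $d_h$ is defined \emph{implicitly} by \eqref{eq:psihmap}; for $k\ge 2$ the function $d_h|_T$ is the root of a degree-$k$ polynomial equation in its second argument and is therefore only an elementwise smooth function ($C^{k+1}$ with bounds from Lemma~\ref{lem:dh}), not a polynomial. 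That is precisely why the projection $P_h$ into $(V_h^k)^3$ appears in the definition $\Theta_h=P_h\Psi_h$. Consequently $q_T=(\Theta_h-\Psi_h)|_T$ is not a polynomial, so the claims ``$|q_T|_{H^{m,\infty}(T)}=0$ for $m>k$'', ``$q_T$ equals its own nodal interpolant'', and the inverse inequality $|q_T|_{H^{m,\infty}(T)}\lesssim h^{-m}\Vert q_T\Vert_{\infty,T}$ all fail. The same objection defeats your proposed alternative via $\Vert\Theta_h-\Psi\Vert_{\infty,\OGamma}$ and $\Vert\Psi-\Psi_h\Vert_{\infty,\OGamma}$: it again ends with an inverse estimate applied to a non-polynomial.

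The repair is the three-term splitting the paper uses: insert the elementwise nodal interpolant $\Pi\Psi_T\in\mP(T)^3$ of $\Psi_T:=\Psi_h|_T$ and write $\Theta_h|_T=\Psi_T+(\Pi\Psi_T-\Psi_T)+(P_h\Psi_h-\Pi\Psi_T)$. The first term is bounded by Lemma~\ref{lem:dh}; the second is an interpolation error of a $C^{k+1}$ function and satisfies $\Vert\Pi\Psi_T-\Psi_T\Vert_{H^{l,\infty}(T)}\lesssim\Vert\Psi_T\Vert_{H^{l,\infty}(T)}\lesssim 1$; the third term \emph{is} a polynomial, vanishes at nodes interior to $T$, has values $A_{x_i}(\Psi_h)-\Psi_T(x_i)=\mathcal{O}(h^{k+1})$ at boundary nodes by exactly your jump/telescoping argument, and then your expansion in the nodal basis with $\Vert\psi_i\Vert_{H^{l,\infty}(T)}\lesssim h^{-l}$ closes the estimate for $l\le k+1$. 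One further remark: the facet-chain connectivity of $\omega(x_i)$ inside $\mTGamma$ that you flag as delicate is treated just as tacitly in the paper, so I would not count that against you; the polynomial issue is the real defect.
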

\begin{proof}
Recall that $\Theta_h = P_h \Psi_h$, cf.~\eqref{defThetah}. We fix an element $T \in \mTGamma$ and set $\Psi_T = \Psi_h|_T \in C^{k+1}(T)$.
We have 
\[
\Vert \Theta_h \Vert_{H^{l,\infty}(T)} \leq \Vert \Psi_T \Vert_{H^{l,\infty}(T)} + \Vert P_h \Psi_h - \Pi \Psi_T\Vert_{H^{l,\infty}(T)} + \Vert \Pi \Psi_T - \Psi_T\Vert_{H^{l,\infty}(T)},
\]
where $\Pi$ is the nodal interpolation operator into $\mP(T)$. 
For the latter interpolation error we have
\[
\Vert \Pi \Psi_T - \Psi_T \Vert_{H^{l,\infty}(T)} \lesssim \Vert \Psi_T \Vert_{H^{l,\infty}(T)}.
\]
With Lemma \ref{lem:dh} we have
$
\Vert \Psi_T \Vert_{H^{l,\infty}(T)} \lesssim 1
$ 
uniformly in $T$ and hence can bound the first and the last term with $\mathcal{O}(1)$.
It remains to show the estimate for $P_h \Psi_h - \Pi \Psi_T$. Let $\{\psi_i\}_{i\in \mI_T}$, be the nodal basis in $\mP(T)$, as also used in the definition of $P_h$, cf. \eqref{defP}, and $\{x_i\}_{i\in \mI_T}$ the corresponding nodes. We can write (on $T$)
\[
P_h \Psi_h - \Pi \Psi_T = \sum_{i\in \mI_T} (A_{x_i}(\Psi_h) - \Psi_T(x_i)) \psi_i.
\]
For finite element nodes which lie inside an element $T$, i.e. $x_i \not\in \partial T$, we have
$A_{x_i}(\Psi_h) = \Psi_T(x_i)$. For $x_i \in\partial T$ we  use the definition of $A_{x_i}$ and Lemma \ref{lem:dh} and thus obtain: 
\begin{equation} \label{eq:boundq}
|A_{x_i}(\Psi_h) - \Psi_T(x_i)| = |A_{x_i}(\Psi_h - \mathcal{E}_T\Psi_T)| \lesssim \sum_{F \in \mF^{\Gamma}\cap T} \Vert \jump{\Psi_h} \Vert_{\infty,F} \lesssim h^{k+1}.
\end{equation}
In this estimate we used that the number of facets that share a point is uniformly bounded on shape regular meshes. 
With the bound in \eqref{eq:boundq} we get 
\[
\Vert P_h \Psi_h - \Pi \Psi_T \Vert_{H^{l,\infty}(T)}
\lesssim \sum_{i\in \mI_T} \underbrace{|A_{x_i}(\Psi_h) - \Psi_T(x_i)|}_{\lesssim ~ h^{k+1}} \underbrace{\Vert \psi_i \Vert_{H^{l,\infty}(T)}}_{\lesssim ~ h^{-l}} \lesssim 1, \quad l \leq k+1.
\]
which completes the proof.
\end{proof}

We note that Lemma \ref{lemregu} implies that for $u \in H^l(T)$, $T \in \mTGamma$, $l \leq k+1$, we have $\Vert u \circ \Theta_h^{-1} \Vert_{H^l(\Theta_h(T))} \lesssim \Vert u \Vert_{H^l(T)}$.
\section{The isoparametric trace FEM} \label{sec:itracefem}
We start by introducing the space used in the isoparametric trace FEM. We consider the local volume triangulation $\TGamma$ and the standard affine polynomial finite element space $V_h^k$ restricted to $\TGamma$, i.e., $(V_h^k)_{|\OGamma}$. To this space we apply the transformation $\Theta_h$ resulting in the isoparametric space
\begin{equation} \label{isoFEspace} \begin{split}
 V_{h,\Theta}^k & := \{\, v_h \circ \Theta_h^{-1} \mid v_h \in (V_h^k)_{|\OGamma}\, \},\\
 V_{h,\Theta}^{k,0} & := \{ v_h \in V_{h,\Theta}^k \mid \int_{\Gamma_h} v_h \d{s_h}=0 \}.
 \end{split}
\end{equation}
The latter space will be used in our finite element discretization \eqref{eq:varform} below. 
In the error analysis of the method we also use the following larger (infinite dimensional) space:
\[
 \Vregh := \{ v \in H^1(\OGamma_{\Theta})~|~ \tr|_{\Gamma_h} v \in H^1(\Gamma_h)\} \supset V_{h,\Theta}^k,
\]
on which the bilinear forms introduced below are well-defined.
Besides the bilinear form related to the Laplace--Beltrami operator, we also use a \emph{stabilization} $s_h(\cdot, \cdot)$ which we assume to be \emph{symmetric positive semi-definite} and well-defined on the space $\Vregh$. We allow $s_h(\cdot,\cdot) \equiv 0$. The error analysis will reveal that for $s_h(\cdot,\cdot) \equiv 0$ we have optimal order discretization error bounds. For $s_h(\cdot,\cdot) \equiv 0$, however, the stiffness matrix can be very ill-conditioned, depending on how the interface cuts the outer triangulation. The stabilization is used to obtain the usual $\mathcal{O}(h^{-2})$-bound for the condition number of the stiffness matrix, uniformly w.r.t. the cut geometry. In the analysis below we derive conditions on $s_h(\cdot,\cdot)$ such that the latter property holds and we still have optimal order discretization error bounds. Specific choices for $s_h(\cdot,\cdot)$ are discussed in Section \ref{sec:condition-number}.
We introduce the bilinear form 
\begin{align}
 A_h(u,v) & := a_h(u,v)+ s_h(u,v), \quad u,v \in \Vregh, \label{eq:blf2} \\
 a_h(u,v) & := \int_{\Gamma_h} \nabla_{\Gamma_h} u \cdot \nabla_{\Gamma_h} v \, \d{s_h} \label{eq:blf3}.
\end{align}
For the discrete problem we need a suitable extension of the data $f$ to $\Gamma_h$, which is denoted by $f_h$. Specific choices for $f_h$ are discussed in Remark~\ref{daterror}. 
The discrete problem is as follows: 
Find $u_h \in \Vkz$ such that 
\begin{equation} \label{eq:varform} 
 A_h(u_h,v_h) = \int_{\Gamma_h} f_h v_h \, \d{s_h} \quad \text{ for all } v_h \in \Vkz.
\end{equation}

\begin{remark} \label{remnonunique} \rm Because we take the trace of outer finite element functions on the surface approximation $\Gamma_h$ it is natural to introduce the following trace spaces:
\begin{equation} \label{tracespace}
 \begin{split}
V_{h,\Theta}^\Gamma & := \text{tr}|_{\Gamma_h} (V_{h,\Theta}^k), \\
 V_{h,\Theta}^{\Gamma,0} & := \{\, v_h \in V_{h,\Theta}^\Gamma \mid \int_{\Gamma_h} v_h \d{s_h}=0\,\}.
\end{split} 
\end{equation}
Concerning \eqref{eq:varform}, there is the issue that there may be different $w_h,\tilde w_h\in\Vkz$ with the same trace $v_h^\Gamma \in\Vtracez$. In the case $s_h(\cdot,\cdot) \equiv 0$ only trace values on $\Gamma_h$ are used in \eqref{eq:varform}, an thus  we can replace the trial and test space $ \Vkz$ in \eqref{eq:varform} by $V_{h,\Theta}^{\Gamma,0}$. The latter formulation then has a unique solution $u_h^\Gamma \in \Vtracez$, whereas the one in \eqref{eq:varform} may have more solutions, which however, all have the same trace $u_h^\Gamma$. This non-uniqueness issue is directly related to the fact that the set of traces of the outer finite element nodal basis functions form only a frame (in general not a basis) of the trace space $V_{h,\Theta}^\Gamma $. In some of the stabilization approaches introduced further on, the bilinear form $s_h(u_h,v_h)$ will depend also on function values $u_h(x),v_h(x)$ with $x \in \Theta_h(\Omega^\Gamma) \setminus \Gamma_h$. This is the reason why
  we use $\Vkz$ (instead 
of $\Vtracez$) in \eqref{eq:varform}. Adding an appropriate stabilization term $s_h$ will remove the above-mentioned non-uniqueness issue. 
\end{remark}

\begin{remark}[Implementational aspects]\rm
The integrals in \eqref{eq:varform} can be implemented based on numerical integration rules with respect to $\Gammalin$ and the transformation $\Theta_h$. We illustrate this for the bilinear form $a_h(\cdot,\cdot)$, cf. \eqref{eq:blf3}. With $\tilde{u}_h = u_h \circ \Theta_h,~\tilde{v}_h = v_h \circ \Theta_h \in V_h^k$, there holds
\begin{equation}
\label{eq:implementational-aspects}
a_h(u_h, v_h) = \int_{\Gammalin} \tilde g \, \d{\tilde s_h},\quad \tilde g=\mathcal{J}_\Gamma \cdot P_h (D \Theta_h)^{-T} \nabla \tilde{u}_h \cdot 
\ P_h (D \Theta_h)^{-T} \nabla \tilde{v}_h,
\end{equation}
with $P_h = I - n_h n_h^T$ the tangential projection, $n_h = N / \norm{N}$ the unit-normal on $\Gamma_h$ with $N = (D \Theta_h)^{-T} \hat{n}_h$ where $\hat{n}_h = \nabla \hphi / \Vert\nabla \hphi\Vert$ is the normal with respect to $\Gammalin$, and $\mathcal{J}_\Gamma = \det(D \Phi_h) \cdot \Vert N \Vert$.
The mesh transformation $\Theta_h \in (V_h^k)^3$ is explicitly available in the implementation.
The integrand $\tilde g$ is not a polynomial, but smooth on each $T \in \mT^\Gamma$ and thus also on $T \cap \Gammalin$, $T \in \mT^\Gamma$. This means that we only need an accurate integration with respect to the low order geometry $\Gammalin$. We use a numerically stable quadrature rule of exactness degree $2k - 2$ on each $T \cap \Gammalin$, $T \in \mT^\Gamma$, to approximate the integral on the right-hand side of \eqref{eq:implementational-aspects} in the numerical examples. This is motivated by the optimal discretization error bounds in the $H^1$-norm for standard elliptic problems (with variable coefficients) if quadrature is used, see \cite[Thm. 29.1]{CiarletHandbook}.
%
\end{remark}

\section{Discretization error analysis}
\label{sec:error-analysis}
The discretization error analysis that we present is along similar lines as in most papers on finite elements for surface PDEs. We use a Strang Lemma which bounds the discretization error in terms of an approximation error and a consistency error (due to the geometric error). For bounding these two error terms we use results known from the literature. The only essential difference between the analysis below and the analyses known in the literature is that we allow for a generic stabilization $s_h(\cdot,\cdot)$ and introduce conditions on this bilinear form which are sufficient for deriving optimal order discetization error bounds.

In the analysis we need a sufficiently small tubular neigborhood of $\Gamma$.
Let ${\rm sdist}$ denote the signed distance function to $\Gamma$ and $ U_r=\{x\in\R^3\mid  |{\rm sdist}(x)| \le r\}$. The closest-point projector onto $\Gamma$ is denoted by  $p:\, U_r \to \Gamma$. We assume that $r$ is sufficiently small such that the decomposition
\begin{equation} \label{decompo}
 x=p(x)+{\rm sdist}(x)n(x), \quad x \in U_r
\end{equation}
is unique. We assume that $h$ is sufficiently small such that   $\OGamma_{\Theta} \subset U_r$.
 We define the extension $w^e$ of $w \in H^1(\Gamma)$ by $w^e(x):=w(p(x))$ for all $x \in U_r$. We then have $n\cdot \nabla w^e =0$ on $U_r$. In the error analysis we use the natural (semi-)norms
\begin{equation}\label{eq:def-h-norm}
  \enormh{u}^2 :=\|u\|_a^2 + s_h(u,u), \quad \|u\|_a^2:=a_h(u,u), \quad u \in \Vregh.
\end{equation}

\begin{remark} \rm 
On $\Vtracez$ the semi-norm $\|\cdot\|_a$ defines a norm. This follows from a Poincar{\'e} inequality in $\Vtracez$, cf. \eqref{eq:poincare-gamma-h} below.  This implies that, for a solution $u_h\in\Vkz$ of the discrete problem \eqref{eq:varform}, the trace $u_h|_{\Gamma_h}\in \Vtracez$ is unique. The uniqueness of $u_h\in\Vkz$ depends on the stabilization term and will addressed in Remark~\ref{uniquesol} below.
\end{remark}

The error analysis is based on a Strang Lemma:
\begin{lemma} \label{Strang} Let $u \in H^1_\ast(\Gamma)$ be the unique solution of \eqref{eq:weak-LB} with the extension $u^e \in \Vregh$ and $u_h \in \Vkz$ be a solution of \eqref{eq:varform}. Then we have the discretization error bound
\begin{equation} \label{Strangbound}
 \enormh{u^e-u_h} \leq 2 \min_{v_h \in \Vkz} \enormh{u^e - v_h} + \sup_{w_h \in \Vkz} \frac{|A_h(u^e,w_h)-\int_{\Gamma_h} f_h w_h \, \d{s_h}|}{\enormh{w_h}}.
\end{equation}
\end{lemma}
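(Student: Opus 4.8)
The plan is to run the classical Strang argument, using that the bilinear form $A_h = a_h + s_h$ of \eqref{eq:blf2} is precisely the symmetric positive semi-definite form whose induced seminorm is $\enormh{\cdot}$: by \eqref{eq:def-h-norm} and \eqref{eq:blf3} one has $A_h(v,v) = \|v\|_a^2 + s_h(v,v) = \enormh{v}^2$ for all $v \in \Vregh$. In particular the Cauchy--Schwarz inequality $|A_h(v,w)| \le \enormh{v}\,\enormh{w}$ holds for all $v,w \in \Vregh$, even though $\enormh{\cdot}$ is in general only a seminorm (on $\Vregh$, and, depending on $s_h$, possibly also on $\Vkz$); the degenerate cases with vanishing seminorm will be dealt with directly.

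First I would fix an arbitrary $v_h \in \Vkz$ and split $u^e - u_h = (u^e - v_h) + (v_h - u_h)$, so that the triangle inequality gives $\enormh{u^e - u_h} \le \enormh{u^e - v_h} + \enormh{e_h}$ with $e_h := v_h - u_h$. Since $\Vkz$ is a linear space, $e_h \in \Vkz$ and hence is admissible as a test function in \eqref{eq:varform}. Using $A_h(e_h,e_h)=\enormh{e_h}^2$, replacing $A_h(u_h,e_h)$ by $\int_{\Gamma_h} f_h e_h \, \d{s_h}$ via \eqref{eq:varform}, and adding and subtracting $A_h(u^e,e_h)$, I obtain the identity
\[
 \enormh{e_h}^2 = A_h(v_h - u^e,\,e_h) + \Big( A_h(u^e,e_h) - \int_{\Gamma_h} f_h e_h \, \d{s_h} \Big).
\]
The first summand is bounded by $\enormh{u^e - v_h}\,\enormh{e_h}$ via Cauchy--Schwarz; the second is bounded by $\enormh{e_h}$ times the supremum appearing in \eqref{Strangbound} (choosing $w_h = e_h$ there). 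Dividing by $\enormh{e_h}$, or noting the inequality is trivial when $\enormh{e_h}=0$, yields $\enormh{e_h} \le \enormh{u^e - v_h} + \delta_h$, where $\delta_h$ denotes that supremum. Inserting this into the triangle inequality gives $\enormh{u^e - u_h} \le 2\enormh{u^e - v_h} + \delta_h$ (the factor $2$ arising because $\enormh{u^e - v_h}$ enters both estimates), and minimizing over $v_h \in \Vkz$ delivers \eqref{Strangbound}.

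I do not expect a real obstacle here; the only points requiring attention are (i) that $\enormh{\cdot}$ is only a seminorm, so one uses Cauchy--Schwarz for positive semi-definite forms and handles $\enormh{e_h}=0$ as a special case, and (ii) that the ``coercivity'' step is the \emph{identity} $A_h(e_h,e_h)=\enormh{e_h}^2$ rather than an inequality --- which is exactly the reason for including $s_h$ both in $A_h$ and in $\enormh{\cdot}$. The two terms on the right of \eqref{Strangbound} are the approximation error and the consistency (geometric) error; estimating the latter is where Lemmas~\ref{lem:phih} and~\ref{Lemnormals} and the conditions to be imposed on $s_h(\cdot,\cdot)$ will be needed, but that lies outside the scope of the present lemma.
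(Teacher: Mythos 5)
Your argument is correct and is essentially the paper's own proof: both set up the error $e_h$ between $u_h$ and an arbitrary $v_h\in\Vkz$, use $A_h(e_h,e_h)=\enormh{e_h}^2$, split via $u^e$, invoke Cauchy--Schwarz and the discrete equation \eqref{eq:varform}, and finish with the triangle inequality. Your explicit handling of the seminorm/degenerate case $\enormh{e_h}=0$ is a small point of added care that the paper's one-line division glosses over, but the route is identical.
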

\begin{proof}
For $v_h \in \Vkz$ and $w_h = u_h - v_h \in \Vkz$ we have 
\[
 \Vert u_h - v_h \Vert_h \leq \frac{A_h(u_h-v_h,u_h-v_h)}{\Vert w_h \Vert_h} \leq \Vert u^e - v_h \Vert_h + \frac{A_h(u_h-u^e,w_h)}{\Vert w_h \Vert_h}.
\]
Together with \eqref{eq:varform} and the triangle inequality
$
\enormh{u^e - u_h} \leq \enormh{u^e - v_h} + \enormh{u_h - v_h}
$ the claim follows.
\end{proof}

In the following two subsections we analyze the terms in the Strang error bound.

\subsection{Approximation error}
We first recall some known approximation results from the literature. The isoparametric interpolation $\ITk:\, C(\OGamma_\Theta) \to \Vk$ is defined by $(\ITk v)\circ \Theta_h= \Ik(v\circ \Theta_h)$. Using the property in Lemma~\ref{lemregu}, the theory on isoparametric finite elements, cf.~\cite{Lenoir86},
yields the following optimal interpolation error bound for $0 \leq l \leq k+1$:
\begin{equation} \label{interpol}
 \|v - \ITk v\|_{H^l(\Theta_h(T))} \lesssim h^{k+1-l} \|v\|_{H^{k+1}(\Theta_h(T))} ~~ \text{for all}~v \in H^{k+1}(\Theta_h(T)), ~T \in \mT.
\end{equation}
We will also need the following trace estimate \cite{hansbo2002unfitted}:
\begin{equation} \label{Hansboest}
\|v\|_{L^2(\Gamma_T)}^2 \lesssim h^{-1} \|v\|_{L^2(\Theta_h(T))}^2 + h \|\nabla v\|_{L^2(\Theta_h(T))}^2, \quad v \in H^1(\Theta_h(T)),
\end{equation}
with $\Gamma_T:=\Gamma_h \cap \Theta_h(T)$.
To obtain an interpolation in $\Vkz$, we define 
\[
 \ITkz v := \ITk v - |\Gamma_h|^{-1} \int_{\Gamma_h} \ITk v \, \d{s_h}.
\]
For this interpolation operator we have the following error estimate.
\begin{lemma} \label{lemintb} The following holds for all $v \in H^{k+1}(\OGamma_\Theta))$, $l=0,1,2$:
\[
  \| v - \ITkz v \|_{H^l(\OGamma_\Theta)} \lesssim h^{k+1-l} \|v\|_{H^{k+1}(\OGamma_\Theta)} + h^\frac12 \left|\int_{\Gamma_h} v \d{s_h} \right|.
\]
\end{lemma}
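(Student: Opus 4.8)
The plan is to estimate the two terms separately: the interpolation error of $\ITk v$ (already available from \eqref{interpol}) and the correction term coming from the $\Gamma_h$-mean value. Write
\[
v - \ITkz v = (v - \ITk v) + |\Gamma_h|^{-1}\left(\int_{\Gamma_h} \ITk v \,\d{s_h}\right).
\]
Since the second summand is a constant function on $\OGamma_\Theta$, its $H^l$-seminorms for $l=1,2$ vanish, so for the gradient terms only the first summand contributes and \eqref{interpol} (summed over $T \in \mTGamma$, using $|\OGamma_\Theta| \sim h$ times the number of cut elements is not needed here since $l\geq 1$ kills the constant) gives the bound $h^{k+1-l}\|v\|_{H^{k+1}(\OGamma_\Theta)}$ directly. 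For $l=0$ I must also control the $L^2(\OGamma_\Theta)$-norm of the constant, which is $|\OGamma_\Theta|^{1/2} |\Gamma_h|^{-1} |\int_{\Gamma_h}\ITk v\,\d{s_h}|$; using $|\OGamma_\Theta|\sim h$ and $|\Gamma_h|\sim 1$ this is $\lesssim h^{1/2}|\int_{\Gamma_h}\ITk v\,\d{s_h}|$.

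The remaining task is to replace $\int_{\Gamma_h}\ITk v\,\d{s_h}$ by $\int_{\Gamma_h} v\,\d{s_h}$ at the cost of a higher-order term. Write
\[
\left|\int_{\Gamma_h}\ITk v\,\d{s_h}\right| \leq \left|\int_{\Gamma_h} v\,\d{s_h}\right| + \left|\int_{\Gamma_h}(v - \ITk v)\,\d{s_h}\right|,
\]
and bound the last integral by $|\Gamma_h|^{1/2}\|v - \ITk v\|_{L^2(\Gamma_h)}$. Then apply the trace estimate \eqref{Hansboest} on each $\Theta_h(T)$ together with the interpolation bound \eqref{interpol} for $l=0,1$:
\[
\|v-\ITk v\|_{L^2(\Gamma_T)}^2 \lesssim h^{-1}\|v-\ITk v\|_{L^2(\Theta_h(T))}^2 + h\|\nabla(v-\ITk v)\|_{L^2(\Theta_h(T))}^2 \lesssim h^{2k+1}\|v\|_{H^{k+1}(\Theta_h(T))}^2,
\]
and summing over $T\in\mTGamma$ gives $\|v-\ITk v\|_{L^2(\Gamma_h)} \lesssim h^{k+1/2}\|v\|_{H^{k+1}(\OGamma_\Theta)}$. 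Hence $h^{1/2}|\int_{\Gamma_h}(v-\ITk v)\,\d{s_h}| \lesssim h^{k+1}\|v\|_{H^{k+1}(\OGamma_\Theta)}$, which is absorbed into the first term of the asserted bound, while $h^{1/2}|\int_{\Gamma_h} v\,\d{s_h}|$ is exactly the second term.

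The only mildly delicate points are bookkeeping rather than substance: one must use the uniformity of the constants in Lemma~\ref{lemregu} and \eqref{interpol} (independent of how $\Gammalin$ cuts $\mTGamma$), the estimates $|\OGamma_\Theta|\sim h$ and $|\Gamma_h|\sim 1$ which follow from Lemma~\ref{lem:phih} and shape-regularity, and the fact that $\Theta_h(T)$ for $T\in\mTGamma$ has diameter $\sim h$ so the constants in \eqref{Hansboest} are $h$-uniform. No single step is a real obstacle; the main thing to be careful about is keeping track of which terms are constants (so that their higher-order seminorms vanish) versus which contribute the $h^{1/2}$-weighted mean-value remainder.
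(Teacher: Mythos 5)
Your proof is correct and follows essentially the same route as the paper's: split off the constant mean-value correction, bound its $L^2(\OGamma_\Theta)$-norm by $h^{1/2}\bigl|\int_{\Gamma_h}\ITk v \d{s_h}\bigr|$ using $|\OGamma_\Theta|\lesssim h\,|\Gamma_h|$, and then trade $\int_{\Gamma_h}\ITk v \d{s_h}$ for $\int_{\Gamma_h}v \d{s_h}$ via the trace inequality \eqref{Hansboest} combined with the interpolation bound \eqref{interpol}. One small bookkeeping point: for $l=1,2$ the full $H^l$-norm (not only the seminorm) of the constant term must be retained, but this is exactly the $L^2$-contribution you already estimate in the $l=0$ case, so nothing is lost.
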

\begin{proof}
From a triangle inequality and $|\OGamma_\Theta| \lesssim | \Gamma_h | h$ we get: 
\[
  \| v - \ITkz v \|_{H^l(\OGamma_\Theta)} \lesssim \| v - \ITk v \|_{H^l(\OGamma_\Theta)} + h^\frac12 \left\vert \int_{\Gamma_h} \ITk v \d{s_h} \right\vert, \quad l=0,1,2.
\]
The first term on the right-hand side can be bounded by $ch^{k+1-l} \|v\|_{H^{k+1}(\OGamma_\Theta)}$ using the result in \eqref{interpol}.
For the second term we have, using \eqref{Hansboest}, 
\begin{align*}
 h^\frac12 & \left\vert \int_{\Gamma_h} \ITk v \d{s_h} \right\vert 
\leq 
h ^\frac12 \left|\int_{\Gamma_h} v - \ITk v \d{s_h} \right| + h^\frac12 \left|\int_{\Gamma_h} v \d{s_h} \right| \lesssim h^\frac12 \| v - \ITk v \|_{L^2(\Gamma_h)} \\
 & + h^\frac12 \left|\int_{\Gamma_h} v \d{s_h} \right| \lesssim \| v - \ITk v \|_{L^2(\OGamma_\Theta)} + h \| v - \ITk v \|_{H^1(\OGamma_\Theta)}+ h^\frac12 \left|\int_{\Gamma_h} v \d{s_h} \right|.
\end{align*}
Together with \eqref{interpol} this completes the proof.
\end{proof}
\ \\[1ex]
\begin{lemma} \label{intertrace} For the space $\Vkz$ we have the approximation error estimate
\begin{equation}
 \begin{split} 
 & \min_{v_h \in \Vkz} \big( \|v^e-v_h\|_{L^2(\Gamma_h)} + h \|\nabla(v^e-v_h)\|_{L^2(\Gamma_h)} \big) \\
 & \leq \|v^e-\ITkz v^e \|_{L^2(\Gamma_h)} + h \|\nabla(v^e- \ITkz v^e)\|_{L^2(\Gamma_h)} 
 \lesssim h^{k+1} \|v\|_{H^{k+1}(\Gamma)} 
\end{split}
\end{equation}
for all $v \in H^{k+1}(\Gamma) \cap H^1_\ast(\Gamma)$. (Recall that $v^e$ is a normal extension of $v$.)
\end{lemma}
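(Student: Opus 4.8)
The plan is to reduce the bound on $\Gamma_h$ to the already-established interpolation bounds on the bulk domain $\OGamma_\Theta$ (Lemma~\ref{lemintb}) via the trace inequality \eqref{Hansboest}, and then to control the extra term $|\int_{\Gamma_h} v^e \d{s_h}|$ that appears in Lemma~\ref{lemintb} by exploiting that $v$ has zero mean on $\Gamma$ together with the geometric consistency estimates. The first inequality in the statement is trivial: $\ITkz v^e \in \Vkz$ is an admissible competitor in the minimum. So the work is entirely in the second inequality.

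First I would take $v^e$, the normal extension of $v$, and apply the trace estimate \eqref{Hansboest} summed over all $T \in \mTGamma$ to the function $w = v^e - \ITk v^e$ (note $\ITkz v^e$ and $\ITk v^e$ differ only by a constant, so their gradients on $\Gamma_h$ coincide, and for the $L^2$-part I keep track of the constant separately). This gives
\[
\|v^e - \ITkz v^e\|_{L^2(\Gamma_h)} + h\|\nabla(v^e - \ITkz v^e)\|_{L^2(\Gamma_h)} \lesssim \sum_{l=0}^{1} h^{-\frac12 + l}\|v^e - \ITkz v^e\|_{H^l(\OGamma_\Theta)} + h\,\|v^e - \ITk v^e\|_{H^2(\OGamma_\Theta)}^{\text{part}},
\]
where the $H^2$ contribution comes from applying \eqref{Hansboest} to $\nabla(v^e - \ITk v^e)$ componentwise. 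Plugging in Lemma~\ref{lemintb} (for $l=0,1$) and \eqref{interpol} (for the $H^2$-term, using that $\ITk$ reproduces the constant so $\nabla \ITkz v^e = \nabla \ITk v^e$) yields a bound of the form $h^{k+1}\|v^e\|_{H^{k+1}(\OGamma_\Theta)} + h^{k+1}\big|\int_{\Gamma_h} v^e \d{s_h}\big|$ — here the half-powers of $h$ from \eqref{Hansboest} exactly cancel the half-power in Lemma~\ref{lemintb}, and the $h^{k+1-l}$ rates combine with the $h^{-\frac12+l}$ and $h\cdot h^{k-1}$ weights to give $h^{k+1}$ uniformly in $l$. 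The norm $\|v^e\|_{H^{k+1}(\OGamma_\Theta)}$ is controlled by $\|v\|_{H^{k+1}(\Gamma)}$ using that $\OGamma_\Theta$ is an $\mathcal{O}(h)$-tubular neighborhood and $v^e = v \circ p$ is the normal extension (a standard estimate, e.g.\ from the smoothness of $p$ and a change of variables on the tube).

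It remains to handle $\big|\int_{\Gamma_h} v^e \d{s_h}\big|$. Since $v \in H^1_\ast(\Gamma)$ we have $\int_\Gamma v \d{s} = 0$, so I would write $\int_{\Gamma_h} v^e \d{s_h} = \int_{\Gamma_h} v^e \d{s_h} - \int_\Gamma v \d{s}$ and transport the integral over $\Gamma_h$ back to $\Gamma$ via the map $\Phi := p|_{\Gamma_h}$ (or equivalently via $F_h$ composed with the closest-point projection), picking up a surface measure ratio $\mu_h$ with $\|1 - \mu_h\|_{\infty,\Gamma_h} \lesssim h^{k+1}$, which follows from the normal approximation estimate \eqref{normalres2} and the distance estimate \eqref{resdist} — these are exactly the geometric consistency bounds that the higher-order construction delivers. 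This gives $\big|\int_{\Gamma_h} v^e \d{s_h}\big| = \big|\int_\Gamma v(1 - \mu_h\circ\Phi^{-1})\d{s}\big| \lesssim h^{k+1}\|v\|_{L^1(\Gamma)} \lesssim h^{k+1}\|v\|_{H^{k+1}(\Gamma)}$, and since this term is already multiplied by $h^{k+1}$ in the estimate above it contributes at an even higher order and is certainly absorbed.

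The main obstacle is bookkeeping the powers of $h$ correctly through the two nested reductions (trace inequality $\to$ bulk interpolation $\to$ the extra mean-value term), making sure the half-integer powers cancel and that the $H^2$-interpolation term — which needs $\ITk v^e$ rather than $\ITkz v^e$ and uses $\nabla\ITkz = \nabla\ITk$ — is handled with the right rate; and secondly, stating cleanly the geometric lemma $\|1-\mu_h\|_{\infty} \lesssim h^{k+1}$ for the surface-measure ratio between $\Gamma_h$ and $\Gamma$, which is a consequence of Lemma~\ref{lem:phih} and Lemma~\ref{Lemnormals} but should be invoked explicitly. Everything else is a routine assembly of the cited estimates.
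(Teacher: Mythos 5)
Your proposal follows the paper's proof in all essentials: the first inequality by taking $v_h=\ITkz v^e$ as competitor, the trace inequality \eqref{Hansboest} to pass from $\Gamma_h$ to the bulk, Lemma~\ref{lemintb}/\eqref{interpol} for the bulk interpolation error, a narrow-band estimate for $\|v^e\|_{H^{k+1}(\OGamma_\Theta)}$, and the zero-mean property of $v$ together with the surface-measure ratio $\|1-\mu_h\|_{\infty,\Gamma_h}\lesssim h^{k+1}$ to kill the term $|\int_{\Gamma_h}v^e\,\d{s_h}|$; this is exactly the paper's argument. Two intermediate powers of $h$ are misstated, however, and they happen to compensate. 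First, combining \eqref{Hansboest} (weights $h^{-1/2+l}$) with Lemma~\ref{lemintb} (rates $h^{k+1-l}$) yields $h^{k+1/2}\|v^e\|_{H^{k+1}(\OGamma_\Theta)}$, not $h^{k+1}\|v^e\|_{H^{k+1}(\OGamma_\Theta)}$; the missing half power must come from the narrow-band estimate $\|v^e\|_{H^{k+1}(\OGamma_\Theta)}\lesssim h^{1/2}\|v\|_{H^{k+1}(\Gamma)}$ (the paper's \eqref{nb}/\eqref{pp}), so your "standard estimate... on the tube" needs to be invoked \emph{with} its $h^{1/2}$ gain, not merely as $\lesssim\|v\|_{H^{k+1}(\Gamma)}$. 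Second, the mean-value term $|\int_{\Gamma_h}v^e\,\d{s_h}|$ enters with weight $\mathcal{O}(1)$ (from $l=0$: $h^{-1/2}\cdot h^{1/2}$), not with weight $h^{k+1}$, so it does \emph{not} contribute "at an even higher order": the full $\mathcal{O}(h^{k+1})$ geometric consistency bound on that term is needed exactly, with no slack. Since you do derive that bound anyway, the proof closes, but the power counting as written would not survive a literal transcription.
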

\begin{proof} Take $v \in H^{k+1}(\Gamma) \cap H^1_\ast(\Gamma)$, hence $\int_{\Gamma} v \d s =0 $ holds.
From \eqref{Hansboest} and Lemma~\ref{lemintb} we obtain
\begin{align}
  & \|v^e-\ITkz v^e \|_{L^2(\Gamma_h)} + h \|\nabla(v^e- \ITkz v^e)\|_{L^2(\Gamma_h)} \nonumber \\ & \lesssim h^{-\frac12}\| v^e - \ITkz v^e \|_{L^2(\OGamma_\Theta)} + h^\frac12 \| v^e - \ITkz v^e \|_{H^1(\OGamma_\Theta)} + h^\frac32 \sum_{T \in \mT^\Gamma} \| v^e - \ITkz v^e \|_{H^2(\Theta_h(T))} \nonumber \\
   & \lesssim h^{k+\frac12} \|v^e \|_{H^{k+1}(\OGamma_\Theta)}+ \left|\int_{\Gamma_h} v^e \d{s_h} \right|. \label{tt}
\end{align}
Now note that 
\begin{equation} \label{nb}
 \|D^\mu u^e\|_{L^2(\OGamma_\Theta)} \lesssim h^\frac12 \|u\|_{H^m(\Gamma)} \quad \text{for all}~~u \in H^m(\Gamma), ~|\mu| \leq m,
 \end{equation} 
holds, cf. \cite[Lemma 3.1]{reusken2015}. Using this we get
\begin{equation} \label{pp}
  \|v^e \|_{H^{k+1}(\OGamma_\Theta)} \lesssim h^\frac12 \|v \|_{H^{k+1}(\Gamma)}. 
\end{equation}
We now treat the term $\lvert \int_{\Gamma_h} v^e \d{s_h}\rvert$ in \eqref{tt}.
Recall that $p$ is the closest point projection on $\Gamma$. We use standard results from the literature, e.g. \cite{Demlow06}. For the transformation of the surface measure the relation 
\begin{equation} \label{surfmeasuer}
 \mu_h ds_h(x) = ds(p(x)), \quad \text{for}~~ x \in \Gamma_h,
\end{equation}
holds \cite[Proposition A.1]{Demlow06}. The function $\mu_h$ satisfies
\begin{equation} \label{estmu}
 \|1- \mu_h\|_{\infty,\Gamma_h} \lesssim h^{k+1},
\end{equation}
cf. \cite{Demlow06,reusken2015}.
Using this and $v \in H_\ast^1(\Gamma)$ we get
\begin{equation} \label{ttp} \begin{split}
 \left| \int_{\Gamma_h} v^e \d{s_h} \right|& = 
\Big| \int_{\Gamma_h} v^e \d{s_h} - \int_{\Gamma} v \d{s} \Big|
 = \Big| \int_{\Gamma_h} v^e ( 1 - \mu_h) \d{s_h} \Big| \\ & \lesssim h^{k+1} \Vert v^e \Vert_{L^2(\Gamma_h)} \lesssim h^{k+1} \Vert v \Vert_{L^2(\Gamma)}. 
\end{split} \end{equation}
Combining this with the results in \eqref{tt}, \eqref{pp} completes the proof.
\end{proof}

Using this interpolation bound one easily obtains a bound for the approximation term in the Strang Lemma.
\begin{lemma} Assume that the stabilization satisfies
\begin{equation} \label{conds1}
 s_h(w,w) \lesssim h^{-3}\|w\|_{L^2(\OGamma_\Theta)}^2 + h^{-1} \|\nabla w\|_{L^2(\OGamma_\Theta)}^2 \quad \text{for all} ~~w \in \Vregh.
\end{equation}
Then 
\[
  \min_{v_h \in \Vkz} \enormh{u^e - v_h} \lesssim h^k \|u\|_{H^{k+1}(\Gamma)} \quad \text{holds for all}~~u \in H^{k+1}(\Gamma) \cap H_\ast^1(\Omega).
\]
\end{lemma}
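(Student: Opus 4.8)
The natural candidate is $v_h := \ITkz u^e \in \Vkz$, which is well defined since $u \in H^{k+1}(\Gamma)$ with $\Gamma$ two-dimensional implies (by Sobolev embedding, as $k+1 \geq 2$) that $u^e$ is continuous on $\OGamma_\Theta$, and since $\int_{\Gamma_h} \ITkz u^e \, \d{s_h} = 0$ by construction. With this choice we split
\[
 \enormh{u^e - \ITkz u^e}^2 = \|u^e - \ITkz u^e\|_a^2 + s_h(u^e - \ITkz u^e, u^e - \ITkz u^e)
\]
and estimate the two terms separately.

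For the first term, since $\|w\|_a^2 = \|\nabla_{\Gamma_h} w\|_{L^2(\Gamma_h)}^2 \leq \|\nabla w\|_{L^2(\Gamma_h)}^2$, the bound $\|u^e - \ITkz u^e\|_a \lesssim h^k \|u\|_{H^{k+1}(\Gamma)}$ follows directly from the interpolation estimate in Lemma~\ref{intertrace} (applied to $u \in H^{k+1}(\Gamma) \cap H^1_\ast(\Gamma)$).

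For the stabilization term, I would invoke the hypothesis \eqref{conds1} with $w = u^e - \ITkz u^e \in \Vregh$, reducing matters to bounding $h^{-3}\|w\|_{L^2(\OGamma_\Theta)}^2 + h^{-1}\|\nabla w\|_{L^2(\OGamma_\Theta)}^2$. Here Lemma~\ref{lemintb} (with $l=0$ and $l=1$) gives
\[
 \|w\|_{L^2(\OGamma_\Theta)} \lesssim h^{k+1}\|u^e\|_{H^{k+1}(\OGamma_\Theta)} + h^{\frac12}\Big|\int_{\Gamma_h} u^e \, \d{s_h}\Big|, \qquad
 \|\nabla w\|_{L^2(\OGamma_\Theta)} \lesssim h^{k}\|u^e\|_{H^{k+1}(\OGamma_\Theta)} + h^{\frac12}\Big|\int_{\Gamma_h} u^e \, \d{s_h}\Big|,
\]
and then the two auxiliary bounds already established inside the proof of Lemma~\ref{intertrace}, namely $\|u^e\|_{H^{k+1}(\OGamma_\Theta)} \lesssim h^{\frac12}\|u\|_{H^{k+1}(\Gamma)}$ (from \eqref{nb}/\eqref{pp}) and $\big|\int_{\Gamma_h} u^e \, \d{s_h}\big| \lesssim h^{k+1}\|u\|_{L^2(\Gamma)}$ (from \eqref{estmu}/\eqref{ttp}), finish the job: substituting, $h^{-3}\|w\|_{L^2(\OGamma_\Theta)}^2 \lesssim h^{-3}(h^{2k+3} + h^{2k+3})\|u\|_{H^{k+1}(\Gamma)}^2 \lesssim h^{2k}\|u\|_{H^{k+1}(\Gamma)}^2$ and likewise $h^{-1}\|\nabla w\|_{L^2(\OGamma_\Theta)}^2 \lesssim h^{-1}(h^{2k+1} + h^{2k+3})\|u\|_{H^{k+1}(\Gamma)}^2 \lesssim h^{2k}\|u\|_{H^{k+1}(\Gamma)}^2$. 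Combining with the first term yields $\enormh{u^e - \ITkz u^e} \lesssim h^k\|u\|_{H^{k+1}(\Gamma)}$, and a fortiori the claimed bound on the minimum over $\Vkz$.

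There is no genuine obstacle here; the only point requiring care is the bookkeeping of powers of $h$. In particular the argument shows that the weight $h^{-3}$ appearing in condition \eqref{conds1} is precisely the largest weight compatible with optimal order: it is exactly balanced by the $O(h^{k+1})$ $L^2$-interpolation bound on the co-dimension-one strip $\OGamma_\Theta$ together with the extra $h^{\frac12}$ gained from the normal-extension scaling \eqref{nb} and from the $O(h^{k+1})$ geometric consistency of the surface measure.
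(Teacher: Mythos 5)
Your proposal is correct and follows essentially the same route as the paper: choose $v_h=\ITkz u^e$, bound the $\|\cdot\|_a$-part by Lemma~\ref{intertrace}, and bound the stabilization part via hypothesis \eqref{conds1} together with Lemma~\ref{lemintb} and the estimates \eqref{pp}, \eqref{ttp}. The paper states this in one sentence; your version just makes the power-of-$h$ bookkeeping explicit, and that bookkeeping is accurate.
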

\begin{proof} 
 Take $u \in H^{k+1}(\Gamma)\cap H_\ast^1(\Gamma)$ and $v_h:= \ITkz u^e$. From Lemma~\ref{intertrace} we get $\|u^e-v_h\|_a \lesssim h^k \|u\|_{H^{k+1}(\Gamma)} $.
 From the assumption \eqref{conds1} combined with the results in
  Lemma~\ref{lemintb}
and the estimates \eqref{pp} and \eqref{ttp} we get $s_h(u^e-v_h,u^e-v_h)^\frac12 \lesssim h^k \|u\|_{H^{k+1}(\Gamma)}$, which completes the proof.
\end{proof}

\subsection{Consistency error}
We derive a bound for the consistency error term on the right-hand side in the Strang estimate \eqref{Strangbound}. We have to quantify the accuracy of the data extension $f_h$. 
We recall the definition of $\mu_h$, cf. \eqref{surfmeasuer}, and define
\[
  \delta_f:=f_h- \mu_h f^e \quad \text{on}~~\Gamma_h.
\]
 \begin{lemma} \label{lem:conserr}
Let $u \in H_\ast^1(\Gamma)$ be the solution of \eqref{eq:weak-LB}. Assume that the data error satisfies $\|\delta_f\|_{L^2(\Gamma_h)} \lesssim h^{k+1} \|f\|_{L^2(\Gamma)}$ and the stabilization satisfies
\begin{equation} \label{conds2}
 \sup_{w_h \in \Vkz} \frac{s_h(u^e,w_h)}{\enormh{w_h}} \lesssim h^{l}\|f\|_{L^2(\Gamma)}, \quad \text{with $l=k$ or $l=k+1$}.
\end{equation}
 Then the following holds:
\[ \sup_{w_h \in \Vkz} \frac{|A_h(u^e,w_h)-\int_{\Gamma_h} f_h w_h \, \d{s_h}|}{\enormh{w_h}} \lesssim h^{l}\|f\|_{L^2(\Gamma)}.
\]
\end{lemma}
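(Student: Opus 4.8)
The plan is to compare the discrete left- and right-hand sides of \eqref{eq:varform} with their continuous counterparts on $\Gamma$, thereby isolating a geometric error, a data error and a stabilization error. Write $A_h(u^e,w_h)=a_h(u^e,w_h)+s_h(u^e,w_h)$; by symmetry and sign-definiteness of $s_h$ (apply \eqref{conds2} also to $-w_h$) the stabilization contribution is $|s_h(u^e,w_h)|\lesssim h^{l}\|f\|_{L^2(\Gamma)}\enormh{w_h}$, so it remains to bound $a_h(u^e,w_h)-\int_{\Gamma_h}f_h w_h\,\d{s_h}$. Since $f(1)=0$ and $a(u,1)=0$, the identity \eqref{eq:weak-LB} extends from $H^1_\ast(\Gamma)$ to all of $H^1(\Gamma)$, i.e.\ $a(u,v)=\int_\Gamma f\,v\,\d s$ for every $v\in H^1(\Gamma)$. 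For $w_h\in\Vkz$ let $w_h^\ell\in H^1(\Gamma)$ be its lift to $\Gamma$ along the closest-point projection $p$, characterised by $w_h^\ell\circ p=w_h$ on $\Gamma_h$. Then
\[
 a_h(u^e,w_h)-\int_{\Gamma_h}f_h w_h\,\d{s_h}
 =\underbrace{\bigl(a_h(u^e,w_h)-a(u,w_h^\ell)\bigr)}_{=:E_{\rm geo}}
 +\underbrace{\Bigl(\textstyle\int_\Gamma f\,w_h^\ell\,\d s-\int_{\Gamma_h}f_h w_h\,\d{s_h}\Bigr)}_{=:E_{\rm data}},
\]
and I would estimate $E_{\rm geo}$ and $E_{\rm data}$ separately.

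For $E_{\rm geo}$ I would transform $a_h(u^e,w_h)=\int_{\Gamma_h}\nabla_{\Gamma_h}u^e\cdot\nabla_{\Gamma_h}w_h\,\d{s_h}$ into an integral over $\Gamma$ via $p$, using the standard identities expressing $\nabla_{\Gamma_h}(v\circ p)$ in terms of $\nabla_\Gamma v$ (they involve the tangential projection $\mathbf{P}_{\Gamma_h}=I-n_h n_h^T$ and the factor $I-{\rm sdist}\,\mathbf{H}$, with $\mathbf{H}$ the Weingarten map of $\Gamma$) together with the measure transformation \eqref{surfmeasuer}. This gives $a_h(u^e,w_h)=\int_\Gamma(\mathbf{B}_h\nabla_\Gamma u)\cdot\nabla_\Gamma w_h^\ell\,\d s$ for a geometry-perturbation matrix $\mathbf{B}_h$, so that $E_{\rm geo}=\int_\Gamma\bigl((\mathbf{B}_h-\mathbf{P}_\Gamma)\nabla_\Gamma u\bigr)\cdot\nabla_\Gamma w_h^\ell\,\d s$. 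The crucial point is $\|\mathbf{B}_h-\mathbf{P}_\Gamma\|_{\infty,\Gamma}\lesssim h^{k+1}$: the factor $\mu_h^{-1}-1$ and the ${\rm sdist}\,\mathbf{H}$ contributions are $\mathcal{O}(h^{k+1})$ by \eqref{estmu} and \eqref{resdist}, while the contribution of $\mathbf{P}_{\Gamma_h}-\mathbf{P}_\Gamma=nn^T-n_h n_h^T$ equals, after projection with $\mathbf{P}_\Gamma$ on both sides, $-(\mathbf{P}_\Gamma n_h)(\mathbf{P}_\Gamma n_h)^T=\mathcal{O}(\|n_h-n\|_{\infty}^2)=\mathcal{O}(h^{2k})$ (using $\mathbf{P}_\Gamma n=0$ and Lemma~\ref{Lemnormals}), and $h^{2k}\le h^{k+1}$ for $k\ge1$. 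A Cauchy--Schwarz inequality, the equivalence $\|\nabla_\Gamma w_h^\ell\|_{L^2(\Gamma)}\lesssim\|\nabla_{\Gamma_h}w_h\|_{L^2(\Gamma_h)}$ of lifted gradient norms, and the a priori bound $\|u\|_{H^1(\Gamma)}\lesssim\|f\|_{L^2(\Gamma)}$ (from coercivity of $a$ on $H^1_\ast(\Gamma)$) then yield $|E_{\rm geo}|\lesssim h^{k+1}\|f\|_{L^2(\Gamma)}\|\nabla_{\Gamma_h}w_h\|_{L^2(\Gamma_h)}$.

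For $E_{\rm data}$ I would use $w_h^\ell\circ p=w_h$ and \eqref{surfmeasuer} to rewrite $\int_\Gamma f\,w_h^\ell\,\d s=\int_{\Gamma_h}\mu_h f^e\,w_h\,\d{s_h}$, whence $E_{\rm data}=-\int_{\Gamma_h}\delta_f\,w_h\,\d{s_h}$; Cauchy--Schwarz and the assumed data error estimate give $|E_{\rm data}|\lesssim\|\delta_f\|_{L^2(\Gamma_h)}\|w_h\|_{L^2(\Gamma_h)}\lesssim h^{k+1}\|f\|_{L^2(\Gamma)}\|w_h\|_{L^2(\Gamma_h)}$. Finally, the Poincar\'e inequality in $\Vtracez$ (cf.\ \eqref{eq:poincare-gamma-h}) shows $\|w_h\|_{L^2(\Gamma_h)}+\|\nabla_{\Gamma_h}w_h\|_{L^2(\Gamma_h)}\lesssim\|w_h\|_a\le\enormh{w_h}$, so $|E_{\rm geo}|,|E_{\rm data}|\lesssim h^{k+1}\|f\|_{L^2(\Gamma)}\enormh{w_h}$. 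Adding the stabilization contribution of order $h^l$, $l\in\{k,k+1\}$, and noting $h^{k+1}\lesssim h^l$, dividing by $\enormh{w_h}$ and taking the supremum over $w_h\in\Vkz$ completes the argument.

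The step I expect to require real work is the geometric-perturbation bound $\|\mathbf{B}_h-\mathbf{P}_\Gamma\|_{\infty,\Gamma}\lesssim h^{k+1}$: although the discrete normal field is only $\mathcal{O}(h^k)$ accurate (Lemma~\ref{Lemnormals}), one must exploit the cancellation $\mathbf{P}_\Gamma n n^T\mathbf{P}_\Gamma=0$ so that the normal error enters the relevant quadratic form only quadratically, giving $\mathcal{O}(h^{2k})\subseteq\mathcal{O}(h^{k+1})$ for $k\ge1$. The remaining estimates are routine and rely only on facts already collected above (e.g.\ \eqref{resdist}, \eqref{estmu}, \eqref{surfmeasuer}) and on standard surface-FEM perturbation arguments, cf.\ \cite{Demlow06,reusken2015}.
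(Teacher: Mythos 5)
Your proof is correct and takes essentially the same route as the paper: split $A_h(u^e,w_h)=a_h(u^e,w_h)+s_h(u^e,w_h)$, bound the stabilization term by hypothesis \eqref{conds2}, and bound the remaining geometric/data consistency term by $h^{k+1}\|f\|_{L^2(\Gamma)}\enormh{w_h}$ using \eqref{resdist}, \eqref{estmu}, \eqref{normalres2} and the data-error assumption. The only difference is that the paper delegates this last estimate to \cite[Lemma 5.5]{reusken2015}, whereas you write out that lift-and-perturb argument explicitly, including the essential quadratic cancellation $\mathbf{P}_\Gamma n_h n_h^T\mathbf{P}_\Gamma=\mathcal{O}(h^{2k})$ that upgrades the $\mathcal{O}(h^k)$ normal accuracy to an $\mathcal{O}(h^{k+1})$ consistency error.
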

\begin{proof}
 We use the splitting
\[
   |A_h(u^e,w_h)-\int_{\Gamma_h} f_h w_h \, \d{s_h}| \leq |a_h(u^e,w_h)-\int_{\Gamma_h} f_h w_h \d{s_h}| + s_h(u^e,w_h).
\]
The first term has been analyzed in \cite{reusken2015}, Lemma 5.5. In the analysis one essentially only needs the closeness properties in \eqref{resdist}, \eqref{normalres2} and the bound on the data error.
The analysis yields
\[
  \sup_{w_h \in \Vkz} \frac{|a_h(u^e,w_h)-\int_{\Gamma_h} f_h w_h \, \d{s_h}|}{\enormh{w_h}} \lesssim h^{k+1}\|f\|_{L^2(\Gamma)}.
\]
We use assumption \eqref{conds2} to bound the second term.
\end{proof}

\subsection{Optimal $H^1$-error bound}
As an immediate consequence of the previous results we obtain the following main theorem.
\begin{theorem} \label{mainthm}
Let $u \in H^{k+1}(\Gamma)\cap H_\ast^1(\Gamma)$ be the solution of \eqref{eq:weak-LB} and $u_h \in \Vkz$ a solution of \eqref{eq:varform}. Assume that the data error satisfies $\|\delta_f\|_{L^2(\Gamma_h)} \lesssim h^{k+1} \|f\|_{L^2(\Gamma)}$ and the stabilization satisfies the conditions \eqref{conds1}, \eqref{conds2}. Then the following holds:
\begin{equation}
  \enormh{u^e - u_h } \lesssim h^k \Vert u \Vert_{H^{k+1}(\Gamma)} + h^{l} \Vert f \Vert_{L^2(\Gamma)}.
\end{equation}
\end{theorem}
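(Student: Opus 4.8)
The plan is to combine the Strang estimate from Lemma~\ref{Strang} with the two bounds established in the preceding subsections. The key observation is that all the ingredients are now in place, so the proof amounts to assembling them.

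First I would invoke Lemma~\ref{Strang}, which gives
\[
 \enormh{u^e-u_h} \leq 2 \min_{v_h \in \Vkz} \enormh{u^e - v_h} + \sup_{w_h \in \Vkz} \frac{|A_h(u^e,w_h)-\int_{\Gamma_h} f_h w_h \, \d{s_h}|}{\enormh{w_h}}.
\]
This requires knowing that the exact solution $u \in H^{k+1}(\Gamma) \cap H_\ast^1(\Gamma)$ has a normal extension $u^e \in \Vregh$, which holds since $u^e \in H^1(\OGamma_\Theta)$ and $\tr|_{\Gamma_h} u^e \in H^1(\Gamma_h)$ by the regularity of $u$ on $\Gamma$ and of the closest-point projection.

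Next I would bound the first (approximation) term. Since the stabilization satisfies \eqref{conds1}, the lemma immediately preceding this theorem yields
\[
 \min_{v_h \in \Vkz} \enormh{u^e - v_h} \lesssim h^k \|u\|_{H^{k+1}(\Gamma)}.
\]
For the second (consistency) term, I would apply Lemma~\ref{lem:conserr}: using the assumed data error bound $\|\delta_f\|_{L^2(\Gamma_h)} \lesssim h^{k+1}\|f\|_{L^2(\Gamma)}$ together with the stabilization condition \eqref{conds2} (with exponent $l \in \{k, k+1\}$), one obtains
\[
 \sup_{w_h \in \Vkz} \frac{|A_h(u^e,w_h)-\int_{\Gamma_h} f_h w_h \, \d{s_h}|}{\enormh{w_h}} \lesssim h^{l}\|f\|_{L^2(\Gamma)}.
\]

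Adding these two estimates and absorbing the constant factor $2$ gives the claimed bound $\enormh{u^e - u_h} \lesssim h^k \Vert u \Vert_{H^{k+1}(\Gamma)} + h^{l} \Vert f \Vert_{L^2(\Gamma)}$. There is no real obstacle here: the entire difficulty of the analysis has been front-loaded into the approximation lemma (which rests on the isoparametric interpolation bound \eqref{interpol}, the trace inequality \eqref{Hansboest}, the scaling estimate \eqref{nb}, and the surface-measure comparison \eqref{estmu}) and into Lemma~\ref{lem:conserr} (which relies on the geometric consistency estimates \eqref{resdist}, \eqref{normalres2}, the analysis of \cite{reusken2015}, and the stabilization conditions). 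The only point meriting care is to confirm that the hypotheses of both subsidiary results — in particular that $u^e$ is admissible as a test/comparison function in $\Vregh$ and that the stabilization conditions \eqref{conds1} and \eqref{conds2} are exactly those assumed — are met verbatim, so that the theorem follows as a direct corollary.
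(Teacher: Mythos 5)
Your proposal is correct and follows exactly the route the paper intends: the theorem is stated there as an immediate consequence of Lemma~\ref{Strang} combined with the approximation bound (the lemma assuming \eqref{conds1}) and the consistency bound of Lemma~\ref{lem:conserr}. Your added check that $u^e\in\Vregh$ is a reasonable point of care but introduces nothing beyond the paper's own assembly of these three results.
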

\begin{remark} \rm
\label{daterror}
 We comment on the data error $\|\delta_f\|_{L^2(\Gamma_h)}$, with $\delta_f=f_h -\mu_h f^e$. For the choice $f_h= f^e- \frac{1}{|\Gamma_h|} \int_{\Gamma_h} f^e \d{s_h}$, which in practice often can \emph{not} be realized, we obtain, using \eqref{estmu}, the data error bound $\| \delta_f\|_{L^2(\Gamma_h)} \leq c h^{k+1} \|f\|_{L^2(\Gamma)}$ (as in Lemma \ref{intertrace}). For this data error bound we only need $f \in L^2(\Gamma)$, i.e., we avoid higher order regularity assumptions on $f$. Another, more feasible, possibility arises if we assume $f$ to be defined in a neighborhood $U_{\delta_0}$ of $\Gamma$. As extension one can then use
\begin{equation} \label{extens}
 f_h(x)= f(x)- c_f, \quad c_f:=\frac{1}{|\Gamma_h|} \int_{\Gamma_h} f\, \d{s_h}.
\end{equation}
Using $\int_{\Gamma} f \, \d{s}=0$, \eqref{resdist}, \eqref{estmu} and a Taylor expansion we get $|c_f| \leq c h^{k+1} \|f\|_{H^{1,\infty}(U_{\delta_0})}$ and $\|f- \mu_h f^e\|_{L^2(\Gamma_h)} \leq c h^{k+1} \|f\|_{H^{1,\infty}(U_{\delta_0})}$. Hence, we obtain a data error bound $\|\delta_f\|_{L^2(\Gamma_h)} \leq \hat c h^{k+1}\|f\|_{L^2(\Gamma)}$ with $\hat c=\hat c(f)=c \|f\|_{H^{1,\infty}(U_{\delta_0})} \|f\|_{L^2(\Gamma)}^{-1}$ and a constant $c$ independent of $f$. Thus in problems with smooth data, $ f \in H^{1,\infty}(U_{\delta_0})$, the extension defined in \eqref{extens} satisfies the condition on the data error in Theorem~\ref{mainthm}.
\end{remark}

\begin{corollary} As a trivial consequence of the theorem above we obtain optimal $H^1$-error bounds for the case \emph{without} stabilization, i.e., $s_h(\cdot,\cdot)\equiv 0$. 
\end{corollary}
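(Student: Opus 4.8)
The plan is to observe that the hypotheses of Theorem~\ref{mainthm} are vacuously satisfied when $s_h(\cdot,\cdot)\equiv 0$, and then simply read off the bound. First, with $s_h\equiv 0$ the left-hand side of \eqref{conds1} is identically zero while its right-hand side is non-negative, so \eqref{conds1} holds (with any constant). Likewise, the numerator $s_h(u^e,w_h)$ appearing in \eqref{conds2} vanishes for every $w_h\in\Vkz$, so \eqref{conds2} holds for both admissible choices $l=k$ and $l=k+1$; we take $l=k+1$. The data-error assumption $\|\delta_f\|_{L^2(\Gamma_h)}\lesssim h^{k+1}\|f\|_{L^2(\Gamma)}$ is independent of the stabilization and is met, for instance, by the extensions discussed in Remark~\ref{daterror}.

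Next I would invoke Theorem~\ref{mainthm} with $l=k+1$. Since in the present case $\enormh{\cdot}^2=\|\cdot\|_a^2+s_h(\cdot,\cdot)=\|\cdot\|_a^2=\|\nabla_{\Gamma_h}(\cdot)\|_{L^2(\Gamma_h)}^2$, cf.~\eqref{eq:blf3}, the theorem specializes to
\[
\|\nabla_{\Gamma_h}(u^e-u_h)\|_{L^2(\Gamma_h)}\lesssim h^k\|u\|_{H^{k+1}(\Gamma)}+h^{k+1}\|f\|_{L^2(\Gamma)}\lesssim h^k\big(\|u\|_{H^{k+1}(\Gamma)}+\|f\|_{L^2(\Gamma)}\big),
\]
where in the last step we used $h^{k+1}\le h^k$ for $h$ sufficiently small. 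This is the claimed optimal-order $H^1(\Gamma_h)$-seminorm bound for the unstabilized discretization.

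Finally, I would add a word on well-posedness. As noted in Remark~\ref{remnonunique}, the discrete problem \eqref{eq:varform} with $s_h\equiv 0$ need not have a unique solution $u_h\in\Vkz$, but its trace $u_h|_{\Gamma_h}\in\Vtracez$ is uniquely determined because $\|\cdot\|_a$ is a norm on $\Vtracez$ by the Poincar\'e inequality \eqref{eq:poincare-gamma-h}; the error estimate above is a statement about this trace and is therefore unambiguous. There is essentially no obstacle in this corollary — all the work is already contained in Theorem~\ref{mainthm} — and the only point deserving a remark is precisely this non-uniqueness, which is resolved by passing to the trace space exactly as in Remark~\ref{remnonunique}.
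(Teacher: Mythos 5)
Your proposal is correct and follows exactly the route the paper intends: with $s_h\equiv 0$ the conditions \eqref{conds1} and \eqref{conds2} are satisfied trivially (zero left-hand sides), so Theorem~\ref{mainthm} applies verbatim, which is why the paper offers no separate proof. Your additional remark on the non-uniqueness of $u_h\in\Vkz$ and the well-definedness of the trace $u_h|_{\Gamma_h}$ is consistent with Remark~\ref{remnonunique} and is a worthwhile clarification, but nothing beyond the paper's own reasoning.
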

\section{Condition number analysis}
\label{sec:condition-number}
In this section, we derive condition number bounds for the stiffness matrix resulting from the discretization \eqref{eq:varform}.
It is well-known that in the case $s_h(\cdot,\cdot)=0$ already for $k=1$ the stiffness matrix of the discrete problem may have a condition number that does not scale like $h^{-2}$. This is due to the fact that the condition number depends on the position of the interface with respect to the volume triangulation. Remedies were proposed in \cite{BHL15,burman16fullgradcmame,reusken2015} for the case $k=1$. Below we formulate an assumption on the generic stabilization $s_h(\cdot,\cdot)$ that, together with \eqref{conds1} and \eqref{conds2}, is sufficient to guarantee a stiffness matrix condition number of $\mathcal{O}(h^{-2})$, while still preserving optimal order a-priori discretization error bounds. We thus have a general framework for comparing and analyzing different stabilization techniques, similar to the approach used in \cite{burmanembedded}. In Sections \ref{sec:ghostpen}--\ref{sec:fullgradvol}, for $k=1$
we discuss three stabilizations known from the literature. In Section \ref{sec:normal-derivative-volume}, we treat a fourth stabilization, cf. also \cite{burmanembedded}, which is easy to implement and satisfies the aforementioned conditions also in the higher order case $k\geq 1$.

Let $\bu \in \mathbb{R}^N$ be the representation of $u_h \in V_{h,\Theta}^k$ with respect to the standard nodal basis in $V_{h,\Theta}^k$, i.e., $u_h =\sum_{i=1}^N u_i \phi_i$, and similarly $\bv \in \mathbb{R}^N$ is the representation of $v_h \in V_{h,\Theta}^k$.
The \emph{volume} mass matrix is defined by 
\[
  \langle \bM \bu,\bv\rangle= \int_{\OGamma_\Theta} u_h v_h \, \d{x} \quad \text{for all}~~u_h,v_h \in \Vk.
\]
This matrix is symmetric positive definite and from standard finite element theory it follows that there are positive constants $c_L$ and $c_U$, depending only on $k$ and on the shape regularity of the outer triangulation $\mT$, such that
\begin{equation} \label{estM}
 c_L \leq \frac{\langle \bM \bu, \bu \rangle}{\langle\bu,\bu\rangle} \leq c_U \quad \text{for all}~~\bu \in \mathbb{R}^N, ~ \bu \neq 0.
\end{equation}
The stiffness matrix $\bS \in \mathbb{R}^{N\times N}$ is defined by
\[
  \langle \bS \bu, \bv\rangle= A_h(u_h,v_h) \quad \text{for all}~~u_h,v_h \in V_{h,\Theta}^k.
\]
This matrix is symmetric positive semi-definite. In the discretization we search for $u_h \in V_{h,\Theta}^k$ with $\int_{\Gamma_h} u_h \d s_h=0$. For the vector representation of the latter constraint we introduce $\bc \in \mathbb{R}^N$ with $c_i:= \int_{\Gamma_h} \phi_i \d s_h$, $1\leq i \leq N$, and define 
\begin{equation} \label{cdef}
\mathbb{R}^N_\ast:= \{\, \bu \in \mathbb{R}^N~|~\bu\cdot \bc =0\,\}. 
\end{equation}
Hence $u_h \in \Vkz$ iff $\bu \in \mathbb{R}^N_\ast$.
Let $q_L >0$, $q_U >0$ be such that
\begin{equation}\label{estA}
 q_L \leq \frac{A_h(u_h,u_h)}{\|u_h\|_{L^2(\OGamma_\Theta)}^2} \leq q_U \quad \text{for all}~~u_h \in \Vkz, ~u_h \neq 0.
\end{equation}
The estimates in \eqref{estM} and \eqref{estA} imply
\begin{equation} \label{estS}
 \frac{\max_{\bu \in \mathbb{R}^N_\ast, \|\bu\|_2=1}\langle \bS \bu, \bu \rangle}{\min_{\bu \in \mathbb{R}^N_\ast, \|\bu\|_2=1}\langle \bS \bu, \bu \rangle} =: \operatorname{cond_\ast}(\bS) \leq \frac{c_U q_U}{c_Lq_L}.
\end{equation}
 Hence, we want to obtain (sharp) estimates for the bounds in \eqref{estA}. We are interested in the dependence of $q_L,q_U$ on $h$. Recall that in the inequalities $\lesssim$ (also used below) the constant is independent of $h$ \emph{and of how the surface cuts the volume triangulation}. Concerning the upper bound in \eqref{estA} we have the following result.
\begin{lemma} \label{lemqU}
 Assume that the stabilization satisfies \eqref{conds1}. The following holds:
\begin{equation} \label{b1}
 \frac{A_h(u_h,u_h)}{\|u_h\|_{L^2(\OGamma_\Theta)}^2} \lesssim h^{-3} \quad \text{for all}~~u_h \in \Vk, ~u_h \neq 0.
\end{equation}
\end{lemma}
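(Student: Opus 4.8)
The plan is to bound $A_h(u_h,u_h) = a_h(u_h,u_h) + s_h(u_h,u_h)$ term by term, controlling each by $h^{-3}\|u_h\|_{L^2(\OGamma_\Theta)}^2$. For the stabilization part, this is immediate from the assumed condition \eqref{conds1}: it gives $s_h(u_h,u_h) \lesssim h^{-3}\|u_h\|_{L^2(\OGamma_\Theta)}^2 + h^{-1}\|\nabla u_h\|_{L^2(\OGamma_\Theta)}^2$, and the second summand is absorbed into the first by an inverse inequality $\|\nabla u_h\|_{L^2(\OGamma_\Theta)} \lesssim h^{-1}\|u_h\|_{L^2(\OGamma_\Theta)}$ valid for finite element functions in $\Vk$ (transported from the reference element via the uniform regularity of $\Theta_h$ in Lemma~\ref{lemregu}). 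So the heart of the matter is the $a_h(u_h,u_h)$ term.

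For $a_h(u_h,u_h) = \|\nabla_{\Gamma_h} u_h\|_{L^2(\Gamma_h)}^2 \leq \|\nabla u_h\|_{L^2(\Gamma_h)}^2$ (the tangential gradient being bounded by the full gradient pointwise on $\Gamma_h$), I would first pull the estimate back to the undeformed setting: writing $\tilde u_h = u_h \circ \Theta_h \in (V_h^k)_{|\OGamma}$ and using that $\Theta_h$ and $\Theta_h^{-1}$ have uniformly bounded Jacobians (again Lemma~\ref{lemregu}, and the companion bound for the inverse noted right after it), one has $\|\nabla u_h\|_{L^2(\Gamma_h)} \lesssim \|\nabla \tilde u_h\|_{L^2(\Gamma^{\rm lin})}$ up to lower-order geometric factors and, conversely, $\|u_h\|_{L^2(\OGamma_\Theta)} \gtrsim \|\tilde u_h\|_{L^2(\OGamma)}$. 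Then the problem reduces to the standard TraceFEM estimate on the piecewise planar interface: for each $T \in \mTGamma$ apply the trace inequality \eqref{Hansboest} in the untransformed form $\|v\|_{L^2(\Gamma^{\rm lin}\cap T)}^2 \lesssim h^{-1}\|v\|_{L^2(T)}^2 + h\|\nabla v\|_{L^2(T)}^2$ with $v = \nabla \tilde u_h$ (componentwise), giving $\|\nabla\tilde u_h\|_{L^2(\Gamma^{\rm lin}\cap T)}^2 \lesssim h^{-1}\|\nabla\tilde u_h\|_{L^2(T)}^2 + h\|D^2\tilde u_h\|_{L^2(T)}^2$, and then use the inverse inequalities $\|D^2\tilde u_h\|_{L^2(T)} \lesssim h^{-1}\|\nabla\tilde u_h\|_{L^2(T)} \lesssim h^{-2}\|\tilde u_h\|_{L^2(T)}$ on each element. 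Summing over $T \in \mTGamma$ yields $a_h(u_h,u_h) \lesssim h^{-3}\|\tilde u_h\|_{L^2(\OGamma)}^2 \lesssim h^{-3}\|u_h\|_{L^2(\OGamma_\Theta)}^2$, which is the claim.

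The main obstacle — or rather the one point requiring care — is justifying that all the inverse inequalities and trace inequalities, which are classically stated on affine simplices, survive the isoparametric transformation with constants independent of how $\Gamma^{\rm lin}$ cuts the mesh. This is exactly what Lemma~\ref{lemregu} (together with the remark following it, and \eqref{eq:phih1}--\eqref{eq:phi2} guaranteeing $D\Theta_h$ and $D\Theta_h^{-1}$ are uniformly bounded and bounded away from degeneracy) is designed to provide, so the argument is genuinely routine once that scaffolding is in place; one simply has to be careful that the trace constant in \eqref{Hansboest} does not depend on the cut position, which is stated explicitly there. I would also note that, alternatively, one could avoid pulling back entirely and apply \eqref{Hansboest} directly on $\Theta_h(T)$ with an inverse inequality for $V_{h,\Theta}^k$-functions on $\Theta_h(T)$, which is equivalent but slightly more direct; the pull-back version is cleaner to write.
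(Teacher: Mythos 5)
Your proof is correct and follows essentially the same route as the paper: bound $a_h$ by the full gradient on $\Gamma_h$, pull back to $\Gammalin$ via $\Theta_h$ using Lemma~\ref{lemregu}, apply a cut-element trace estimate followed by elementwise inverse inequalities to reach $h^{-3}\|u_h\|_{L^2(\OGamma_\Theta)}^2$, and handle $s_h$ via \eqref{conds1} plus an inverse inequality. The only cosmetic difference is that the paper invokes the trace step $\|\nabla \hat u_h\|_{L^2(\Gammalin\cap T)}^2\lesssim h^{-1}\|\nabla\hat u_h\|_{L^2(T)}^2$ directly as a finite element inverse inequality, whereas you derive it from \eqref{Hansboest} combined with $\|D^2\tilde u_h\|_{L^2(T)}\lesssim h^{-1}\|\nabla\tilde u_h\|_{L^2(T)}$ — both are valid and equivalent here.
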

\begin{proof}
We use Lemma \ref{lem:phih} and finite element inverse inequalities which we apply to $\hat{u}_h := u_h \circ \Theta_h |_T$, $T \in \mT^\Gamma$, so that for all $u_h \in \Vk$ there holds
\begin{align*}
  \Vert \nabla u_h \Vert_{\Gamma_h \cap \Theta_h(T)}^2 \!
\lesssim \!
  \Vert \nabla \hat{u}_h \Vert_{\Gammalin \cap T}^2 \!
\lesssim \!
h^{-1} \Vert \nabla \hat{u}_h \Vert_{L^2(T)}^2 \!
\lesssim \!
h^{-3} \Vert \hat{u}_h \Vert_{L^2(T)}^2 \!
\lesssim \!
h^{-3} \Vert u_h \Vert_{L^2(\Theta_h(T))}^2.
\end{align*}
Summing over $T \in \mT^\Gamma$ we get
\begin{align*}
 a_h(u_h,u_h) & = \Vert \nabla_{\Gamma_h} u_h \Vert_{L^2(\Gamma_h)}^2 \leq \Vert \nabla u_h \Vert_{L^2(\Gamma_h)}^2 \lesssim h^{-3} \| u_h\|_{L^2(\OGamma_\Theta)}^2. 
\end{align*}
The assumption \eqref{conds1} and an inverse inequality yield the same bound for $s_h(\cdot,\cdot)$
\[
 s_h(u_h,u_h) \lesssim h^{-3} \|u_h\|_{L^2(\OGamma_\Theta)}^2 + h^{-1} \|\nabla u_h\|_{L^2(\OGamma_\Theta)}^2 \lesssim h^{-3} \|u_h\|_{L^2(\OGamma_\Theta)}^2.
\]
\end{proof}

From Lemma~\ref{lemqU} and the result in \eqref{estS}, we obtain as a corollary the following main result.
\begin{theorem}\label{coro:condition-number}
Assume that the stabilization satisfies \eqref{conds1} and that
\begin{equation}
a_h(u_h,u_h) + s_h(u_h,u_h) \gtrsim h^{-1} \| u_h\|_{L^2(\OGamma_\Theta)}^2 \quad \text{for all}~~u_h \in \Vkz. \label{conds3}
\end{equation}
Then, the spectral condition number satisfies
\begin{equation} \label{condbound}
 \operatorname{cond_\ast}(\bS) \lesssim h^{-2}.
\end{equation}
\end{theorem}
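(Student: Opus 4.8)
The plan is to obtain \eqref{condbound} directly from the spectral equivalences already assembled in \eqref{estM}–\eqref{estS}, once the two constants $q_L,q_U$ in the two-sided bound \eqref{estA} are pinned down in terms of $h$. Recall that \eqref{estS} reduces the whole problem to controlling the Rayleigh quotient $A_h(u_h,u_h)/\|u_h\|_{L^2(\OGamma_\Theta)}^2$ on $\Vkz$ from above and below, since the mass-matrix equivalence \eqref{estM} has constants $c_L,c_U$ depending only on $k$ and the shape regularity of $\mT$.

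For the upper bound I would simply invoke Lemma~\ref{lemqU}: under hypothesis \eqref{conds1} it already gives
\[
 \frac{A_h(u_h,u_h)}{\|u_h\|_{L^2(\OGamma_\Theta)}^2} \lesssim h^{-3} \quad \text{for all } u_h \in \Vk,\ u_h \neq 0,
\]
hence a fortiori on the subspace $\Vkz$, so one may take $q_U \lesssim h^{-3}$ in \eqref{estA}. For the lower bound, hypothesis \eqref{conds3} is exactly the statement
\[
 A_h(u_h,u_h) = a_h(u_h,u_h) + s_h(u_h,u_h) \gtrsim h^{-1}\|u_h\|_{L^2(\OGamma_\Theta)}^2 \quad \text{for all } u_h \in \Vkz,
\]
so one may take $q_L \gtrsim h^{-1}$. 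Substituting these into \eqref{estS} yields
\[
 \operatorname{cond_\ast}(\bS) \leq \frac{c_U q_U}{c_L q_L} \lesssim \frac{h^{-3}}{h^{-1}} = h^{-2},
\]
and since $c_L,c_U$ do not depend on how $\Gamma$ meets the triangulation, neither does the hidden constant, which is precisely the robustness we want.

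There is essentially no obstacle left in this particular statement: the substance is front-loaded into \eqref{estS} (a routine consequence of the Courant–Fischer characterization combined with \eqref{estM}) and into Lemma~\ref{lemqU}. The one point deserving care is that \eqref{conds3} must be imposed and used on the mean-zero subspace $\Vkz$ rather than on all of $\Vk$ — on $\Vk$ no coercivity estimate of this form can hold, because of the frame/basis mismatch discussed in Remark~\ref{remnonunique} (constants have vanishing surface gradient, and there may be nonzero functions vanishing on $\Gamma_h$); the constraint $\bu\cdot\bc=0$ removes the constants, and the stabilization $s_h$ is designed to control the remainder. The genuinely hard work, namely verifying \eqref{conds1}, \eqref{conds2} and \eqref{conds3} simultaneously for a concrete choice of $s_h$ — especially for $k\ge 2$ — is deferred to the subsequent sections and is not needed for the proof of this theorem.
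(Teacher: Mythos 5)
Your proposal is correct and follows exactly the route the paper takes: the theorem is stated there as a corollary of Lemma~\ref{lemqU} (giving $q_U\lesssim h^{-3}$ in \eqref{estA}) combined with hypothesis \eqref{conds3} (giving $q_L\gtrsim h^{-1}$) and the reduction \eqref{estS}, which is precisely your computation $\operatorname{cond_\ast}(\bS)\leq c_Uq_U/(c_Lq_L)\lesssim h^{-2}$. Your closing remarks about restricting to $\Vkz$ and deferring the verification of the conditions on $s_h$ also match the paper's organization.
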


\begin{remark} \label{uniquesol} \rm From the previous theorem it follows that if the stabilization satisfies \eqref{conds1} and \eqref{conds3} then the stiffness matrix is regular and thus the discrete problem \eqref{eq:varform} has a unique solution, cf. Remark~\ref{remnonunique}.
\end{remark}

\subsection{Assumptions on the stabilization term}
We summarize the assumptions on the stabilization term $s_h$ used to derive Theorem \ref{mainthm} (optimal discretization error bound) and Theorem \ref{coro:condition-number} (condition number bound):
\begin{subequations} \label{conds}
\begin{align}
  s_h(w,w) & \lesssim h^{-3}\|w\|_{L^2(\OGamma_\Theta)}^2 + h^{-1} \|\nabla w\|_{L^2(\OGamma_\Theta)}^2 ~~\text{for all}~w \in \Vregh, \label{conds1A} \\ 
\sup_{w_h \in \Vkz} \frac{s_h(u^e,w_h)}{\enormh{w_h}} & \lesssim h^{l}\|f\|_{L^2(\Gamma)}, \quad \text{with }l=k \text{ or } l=k+1, \label{conds2A} \\
\!\!\! a_h(u_h,u_h) + s_h(u_h,u_h) & \gtrsim h^{-1} \| u_h\|_{L^2(\OGamma_\Theta)}^2 \quad \text{for all}~~u_h \in \Vkz. \label{conds3A} 
\end{align}
\end{subequations}
%
The first two are needed for optimal discretization error bounds, and the first and third one are needed for the uniform $\mathcal{O}(h^{-2})$ condition number bound.
We note that we only need $l=k$ in \eqref{conds2A} to obtain optimal $H^1$ error bounds. Having \eqref{conds2A} with $l=k+1$ may be useful in order to derive $L^2$ error bounds. The latter has not been studied, yet.\\


\subsection{Ghost penalty stabilization} \label{sec:ghostpen}
The ``ghost penalty'' stabilization is introduced in \cite{Burman2010} as a stabilization mechanism for unfitted finite element discretizations. In \cite{BHL15}, it is applied to a trace finite element discretization of the Laplace--Beltrami equation with piecewise linear finite elements ($k=1$). This stabilization is defined by the facet-based bilinear form 
\[
s_h(u_h,v_h) = \rho_s \sum_{F \in \mathcal{F}_h} \int_F \jump{\nabla u_h \cdot n_h} \jump{\nabla v_h \cdot n_h} \d{s},
\]
with a stabilization parameter $\rho_s > 0$, $\rho_s \simeq 1$, and with $n_h$ the normal to the facet.
For $k=1$, the assumptions in \eqref{conds} are satisfied due to results in \cite{BHL15}: Assumption \eqref{conds1A} follows from \cite[Lemma 4.6]{BHL15}, \eqref{conds2A} follows from $\jump{\nabla u^e \cdot n_h}=0$ for the smooth solution $u$, and \eqref{conds3A} follows from \cite[Lemma 4.5]{BHL15}.

A less nice property of the ghost-penalty method is that the jump of the derivatives on the element-facets changes the sparsity pattern of the stiffness matrix. The facet-based terms enlarge the discretization stencils. 

To our knowledge, there is no higher order version of the ghost penalty method for surface PDEs which provides a uniform bound on the condition number.

\subsection{Full gradient surface stabilization} \label{sec:fullgradsurf}
The ``full gradient'' stabilization is a method which does not rely on facet-based terms and keeps the sparsity pattern intact. It was introduced in \cite{DER14,reusken2015}. The bilinear form which describes this stabilization is
\begin{equation} \label{fullgradient}
 s_h(u_h,v_h):= \int_{\Gamma_h} \nabla u_h \cdot n_h \, \nabla v_h \cdot n_h \, \d{s_h},
\end{equation}
where $n_h$ denotes the normal to $\Gamma_h$.
Thus, we get $A_h(u_h,v_h)=\int_{\Gamma_h} \nabla u_h \cdot \nabla v_h \, \d{s_h}$, which explains the name of the method. 
The stabilization is very easy to implement. The conditions \eqref{conds1A} and \eqref{conds2A} hold for any $k$ with $l=k$, cf. \cite[Lemma 5.5]{reusken2015}.

For the case $k=1$, it is shown in \cite{reusken2015} that one has a uniform condition number bound as in \eqref{condbound}. The proof in \cite{reusken2015} relies on estimates similar to \eqref{conds1A} and \eqref{conds3A}, see \cite[Lemma 6.3]{reusken2015}.
For the case $k >1$, full gradient stabilization does not result in a uniform bound on the condition number, cf. \cite[Remark 6.5]{reusken2015}. This can be traced back to a failure to satisfy \eqref{conds3A}.

\subsection{Full gradient volume stabilization} \label{sec:fullgradvol}
Another ``full gradient'' stabilization was introduced in \cite{burman16fullgradcmame}. It uses the full gradient in the volume instead of (only) on the surface. The stabilization bilinear form is
\[
 s_h(u_h,v_h) = \rho_s \int_{\OGamma_{\Theta}} \nabla u_h \cdot \nabla v_h \d{x},
\]
with a stabilization parameter $\rho_s > 0$, $\rho_s \simeq h$. Again, it is easy to implement this stabilization as its bilinear form is provided by most finite element codes.

Condition \eqref{conds1A} is satisfied as $s_h(w,w) \simeq h \Vert \nabla w \Vert_{L^2(\Omega_\Theta^\Gamma)}^2$. In \cite[Lemma 4.2]{burman16fullgradcmame}, the condition \eqref{conds3A} is shown to hold. Hence, the bound \eqref{condbound} for the spectral condition number holds for arbitrary $k \geq 1$. 
The consistency condition \eqref{conds2A}, however, is satisfied only in the case $l=k=1$, cf. \cite[Lemma 6.2, Term III]{burman16fullgradcmame}.

\subsection{Normal derivative volume stabilization}
\label{sec:normal-derivative-volume}
In the lowest-order case $k=1$, the stabilization methods discussed in Section \ref{sec:ghostpen}, \ref{sec:fullgradsurf}, and \ref{sec:fullgradvol} satisfy the conditions \eqref{conds1A}, \eqref{conds2A}, and \eqref{conds3A}. For $k>1$, however, for all of these methods at least one of the three conditions in \eqref{conds} is violated. 
We now introduce a stabilization method, also considered in \cite{burmanembedded}, which fulfills \eqref{conds} for arbitrary $k \geq 1$. Its bilinear form is given by
\begin{equation}\label{eq:def-normal-gradient-volume-stab}
 s_h(u_h,v_h):= \rho_s \int_{\OGamma_\Theta} n_h \cdot \nabla u_h \, n_h \cdot \nabla v_h \, \d{x}
\end{equation}
with $n_h$ as in Lemma~\ref{Lemnormals} and $\rho_s>0$.
This is a (natural) variant of the stabilizations treated in Section \ref{sec:fullgradsurf} and \ref{sec:fullgradvol}. As in the full gradient surface stabilization only normal derivatives are added, but this time (as in the full gradient volume stabilization) in the volume $\OGamma_\Theta$. 
The implementation of this stabilization term is fairly simple as it fits well into the structure of many finite element codes.
The scaling of the stabilization parameter $\rho_s$ is assumed to satisfy
\begin{equation}\label{eq:as-rho}
  h \lesssim \rho_s \lesssim h^{-1}.
\end{equation}
In the next section we prove that this stabilization satisfies all three conditions in \eqref{conds}, for arbitrary $k \geq 1$.
\begin{remark} \label{comextra} \rm
  This stabilization method has also been introduced in the recent preprint  \cite{burmanembedded}. There it is used in the setting of \emph{linear} trace FEM (or CutFEM) for the discretization  of partial differential equations on embedded manifolds of arbitrary codimension. In that paper an important result \cite[Proposition 8.8]{burmanembedded} is derived that is very similar to a main result in the next section, Lemma~\ref{lemma3A}. The analysis given in  \cite{burmanembedded} applies also to the setting of higher order trace FEM. The analysis given in section~\ref{sec:new-stab} differs from the one given in  \cite{burmanembedded}.
  Due to the isoparametric mapping, our analysis has to consider curved tetrahedra and corresponding isoparametric finite element spaces while straight simplices and piecewise polynomial spaces are considered in \cite{burmanembedded}.
  In \cite{burmanembedded} the analysis is based on the concept of a ``fat intersection covering'', cf. \cite{BHL15}, while we use a more direct approach in our analysis.
\end{remark}

\section{Analysis of the normal derivative volume stabilization} \label{sec:new-stab}
 In this section we analyze the normal derivative volume stabilization \eqref{eq:def-normal-gradient-volume-stab}. We will prove that this method satisfies the conditions in \eqref{conds}. 
 The structure of this section is as follows. In section \ref{sec:proofs:easy} we consider the, relatively easy to prove, conditions \eqref{conds1A} and \eqref{conds2A}. It turns out that condition \eqref{conds3A} is more difficult to prove and requires more analysis, which is given in section~\ref{sectproof3A}.

\subsection{The conditions \eqref{conds} for the normal derivative volume stabilization} \label{sec:proofs:easy}
\begin{lemma}
  If the scaling assumption \eqref{eq:as-rho} holds, the normal derivative volume stabilization satisfies condition \eqref{conds1A}.
\end{lemma}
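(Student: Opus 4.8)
The plan is to exploit the fact that $s_h(w,w)$ is bounded by (a multiple of) the full-gradient volume energy, which trivially sits inside the right-hand side of \eqref{conds1A}. First I would observe that $n_h$ is a unit vector pointwise on $\OGamma_\Theta$ (by its definition in Lemma~\ref{Lemnormals}, where it is normalized), so that Cauchy--Schwarz gives the pointwise bound $(n_h \cdot \nabla w(x))^2 \le \|n_h(x)\|_2^2\,\|\nabla w(x)\|_2^2 = \|\nabla w(x)\|_2^2$ for a.e.\ $x \in \OGamma_\Theta$.

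Integrating this pointwise inequality over $\OGamma_\Theta$ and multiplying by $\rho_s$ yields
\[
 s_h(w,w) = \rho_s \int_{\OGamma_\Theta} (n_h\cdot\nabla w)^2\,\d{x} \le \rho_s \, \|\nabla w\|_{L^2(\OGamma_\Theta)}^2 .
\]
Then I would invoke the upper part of the scaling assumption \eqref{eq:as-rho}, namely $\rho_s \lesssim h^{-1}$, to conclude
\[
 s_h(w,w) \lesssim h^{-1}\,\|\nabla w\|_{L^2(\OGamma_\Theta)}^2 \le h^{-3}\,\|w\|_{L^2(\OGamma_\Theta)}^2 + h^{-1}\,\|\nabla w\|_{L^2(\OGamma_\Theta)}^2,
\]
which is exactly \eqref{conds1A}. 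Note that only the upper bound on $\rho_s$ is used here; the lower bound $h\lesssim\rho_s$ will be needed elsewhere (for condition \eqref{conds3A}).

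There is essentially no obstacle in this lemma: the only point requiring a word of care is that the estimate must hold for all $w\in\Vregh$, not just discrete functions, but since the argument is purely pointwise and uses only $\|\nabla w\|_{L^2(\OGamma_\Theta)}<\infty$ (which holds on $\Vregh$ by definition), no inverse inequality or discreteness is needed. Hence the proof is immediate.
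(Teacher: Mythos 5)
Your proposal is correct and follows exactly the same route as the paper: pointwise Cauchy--Schwarz with $\|n_h\|_2=1$ gives $s_h(w,w)\le\rho_s\|\nabla w\|_{L^2(\OGamma_\Theta)}^2$, and the upper bound $\rho_s\lesssim h^{-1}$ from \eqref{eq:as-rho} yields \eqref{conds1A}. The paper's proof is a one-line version of precisely this argument.
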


\begin{proof}
Using the scaling assumption we get
\[
  s_h(w,w) = \rho_s\|n_h \cdot \nabla w\|_{L^2(\OGamma_\Theta)}^2 \lesssim h^{-1} \|\nabla w\|_{L^2(\OGamma_\Theta)}^2,
\]
and thus
\eqref{conds1A} holds.
\end{proof}

\begin{lemma}
  If the scaling assumption \eqref{eq:as-rho} holds, the normal derivative volume stabilization satisfies condition \eqref{conds2A} with $l=k$.
\end{lemma}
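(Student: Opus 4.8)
The plan is to exploit the fact that $u^e$ is the \emph{normal} extension of $u$, so that $n\cdot\nabla u^e=0$ on $U_r\supset\OGamma_\Theta$. Consequently $n_h\cdot\nabla u^e=(n_h-n)\cdot\nabla u^e$ pointwise on $\OGamma_\Theta$, so the factor $n_h\cdot\nabla u^e$ appearing in $s_h(u^e,w_h)$ is of size $h^k$ by the normal approximation estimate \eqref{normalres1} in Lemma~\ref{Lemnormals}. This single observation is the whole content of the proof; the rest is bookkeeping.

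Concretely, I would first write, using the definition \eqref{eq:def-normal-gradient-volume-stab} and Cauchy--Schwarz,
\[
 |s_h(u^e,w_h)| = \rho_s\Big|\int_{\OGamma_\Theta}(n_h-n)\cdot\nabla u^e\;\, n_h\cdot\nabla w_h\,\d{x}\Big| \le \rho_s\,\|(n_h-n)\cdot\nabla u^e\|_{L^2(\OGamma_\Theta)}\,\|n_h\cdot\nabla w_h\|_{L^2(\OGamma_\Theta)}.
\]
For the second factor I would use $\rho_s\|n_h\cdot\nabla w_h\|_{L^2(\OGamma_\Theta)}^2 = s_h(w_h,w_h)\le\enormh{w_h}^2$, i.e. $\|n_h\cdot\nabla w_h\|_{L^2(\OGamma_\Theta)}\le\rho_s^{-1/2}\enormh{w_h}$. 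For the first factor I would combine the bound $\|n_h-n\|_{\infty,\OGamma_\Theta}\lesssim h^k$ from Lemma~\ref{Lemnormals} with the case $m=1$ of \eqref{nb}, namely $\|\nabla u^e\|_{L^2(\OGamma_\Theta)}\lesssim h^{1/2}\|u\|_{H^1(\Gamma)}$, to get $\|(n_h-n)\cdot\nabla u^e\|_{L^2(\OGamma_\Theta)}\lesssim h^{k+1/2}\|u\|_{H^1(\Gamma)}$.

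Combining these yields $|s_h(u^e,w_h)|\lesssim \rho_s^{1/2}\,h^{k+1/2}\|u\|_{H^1(\Gamma)}\enormh{w_h}$, and invoking the upper scaling bound $\rho_s\lesssim h^{-1}$ of \eqref{eq:as-rho} converts $\rho_s^{1/2}h^{k+1/2}$ into $h^k$. Finally I would use the a priori estimate for \eqref{eq:weak-LB}, namely $\|u\|_{H^1(\Gamma)}\lesssim\|f\|_{H^{-1}(\Gamma)}\lesssim\|f\|_{L^2(\Gamma)}$ (test \eqref{eq:weak-LB} with $v=u$ and use the Poincar\'e inequality on $H^1_\ast(\Gamma)$), to conclude $\sup_{w_h\in\Vkz}s_h(u^e,w_h)/\enormh{w_h}\lesssim h^k\|f\|_{L^2(\Gamma)}$.

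There is no real obstacle here; the proof is short. The only point requiring care is the bookkeeping of the powers of $\rho_s$: one must spend $\rho_s^{-1/2}$ to control the test function through $\enormh{\cdot}$ and can afford at most $\rho_s^{1/2}\lesssim h^{-1/2}$ on the consistency side, which is exactly what the upper bound in \eqref{eq:as-rho} supplies. This is why only the upper scaling bound is used for \eqref{conds2A}, the lower bound $h\lesssim\rho_s$ being needed instead for \eqref{conds3A}. One should also record that $u^e\in\Vregh$, so that $s_h(u^e,\cdot)$ is well defined; this holds because $f\in L^2(\Gamma)$ gives $u\in H^2(\Gamma)$ by elliptic regularity on $\Gamma$, hence $u^e\in H^1(\OGamma_\Theta)$ with $H^1(\Gamma_h)$-trace.
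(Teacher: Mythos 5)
Your proposal is correct and follows essentially the same route as the paper's proof: both reduce $n_h\cdot\nabla u^e$ to $(n_h-n)\cdot\nabla u^e$ via $n\cdot\nabla u^e\equiv 0$, apply Cauchy--Schwarz together with $s_h(w_h,w_h)\le\enormh{w_h}^2$, invoke \eqref{normalres1} and \eqref{nb} to obtain the $h^{k+\frac12}$ factor, and finish with well-posedness of \eqref{eq:weak-LB} and the upper bound $\rho_s\lesssim h^{-1}$. Your added remarks on why only the upper scaling bound enters here are accurate and consistent with the paper.
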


\begin{proof}
Using the Cauchy--Schwarz inequality and \eqref{eq:def-h-norm}, we obtain
\[
  \sup_{w_h\in \Vkz}\frac{s_h(u^e,w_h)}{\norm{w_h}_h}\le \rho_s^{\frac12}\norm{n_h\cdot\nabla u^e}_{L^2(\OGamma_\Theta)}.
\]
From $n\cdot\nabla u^e\equiv 0$, \eqref{normalres1} and \eqref{nb} we get
\[
  \norm{n_h\cdot\nabla u^e}_{L^2(\OGamma_\Theta)}\lesssim h^k\norm{\nabla u^e}_{L^2(\OGamma_\Theta)}\lesssim h^{k+\frac12}\norm{\nabla_\Gamma u}_{L^2(\Gamma)}.
\]
Together with the well-posedness of \eqref{eq:weak-LB}, this yields
\[
  \sup_{w_h\in \Vkz}\frac{s_h(u^e,w_h)}{\norm{w_h}_h}\lesssim \rho_s^{\frac12}h^{k+\frac12}\norm{\nabla_\Gamma u}_{L^2(\Gamma)}\lesssim \rho_s^{\frac12}h^{k+\frac12}\norm{f}_{L^2(\Gamma)}.
\]
The assertion follows from the upper bound for $\rho_s$ in \eqref{eq:as-rho}.
\end{proof}
\begin{remark} \rm From the proof above it follows that if $\rho_s \sim h$ the normal derivative volume stabilization satisfies condition \eqref{conds2A} with $l=k+1$.
\end{remark}
\begin{lemma} \label{lemma3A}
   If the scaling assumption \eqref{eq:as-rho} holds, the normal derivative volume stabilization satisfies \eqref{conds3A} for $h$ sufficiently small.
\end{lemma}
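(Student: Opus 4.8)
The goal is to prove the coercivity-type estimate
\[
  a_h(u_h,u_h) + s_h(u_h,u_h) \gtrsim h^{-1}\|u_h\|_{L^2(\OGamma_\Theta)}^2 \qquad\text{for all } u_h \in \Vkz,
\]
where $s_h(u_h,u_h) = \rho_s\|n_h\cdot\nabla u_h\|_{L^2(\OGamma_\Theta)}^2$ is the normal derivative volume stabilization and $\rho_s \gtrsim h$. The plan is to pass everything back to the reference (untransformed) cut domain $\OGamma$ via $\Theta_h$, using Lemma~\ref{lemregu} and \eqref{eq:phih1} so that norms of $u_h$ and of the relevant (co)normal derivatives on $\OGamma_\Theta$ are equivalent, up to $h$-independent constants, to the corresponding quantities for $\hat u_h := u_h\circ\Theta_h \in V_h^k$ on $\OGamma$. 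In these coordinates the stabilization controls (essentially) $\|\nlin\cdot\nabla\hat u_h\|_{L^2(\OGamma)}^2$ where $\nlin$ is the (elementwise constant) normal to $\Gammalin$, while $a_h$ controls $\|\nabla_{\Gammalin}\hat u_h\|_{L^2(\Gammalin)}^2$ plus a geometric-consistency perturbation of higher order that can be absorbed.

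The heart of the argument is a local, per-element estimate: for each $T\in\mTGamma$ and $\hat u_h\in\mathcal P(T)^{(k)}$ (a polynomial of degree $\le k$ on $T$),
\[
  h^{-1}\|\hat u_h\|_{L^2(T)}^2
  \;\lesssim\;
  h\,\|\nlin\cdot\nabla\hat u_h\|_{L^2(T)}^2
  \;+\;
  h^{-1}\|\hat u_h\|_{L^2(\Gammalin\cap T)}^2
  \;+\;
  h\,\|\nabla_{\Gammalin}\hat u_h\|_{L^2(\Gammalin\cap T)}^2 .
\]
This is the decisive inequality and, I expect, the main obstacle. The idea is: write $T$ as a union of fibers in the direction $\nlin(T)$; on each fiber the polynomial $\hat u_h$ restricted to that line is a univariate polynomial of degree $\le k$, and by \eqref{eq:lsetdist}-type reasoning (or simply because $\Gammalin\cap T$ is a planar piece transversal to $\nlin$ and $T$ has diameter $\sim h$) the fiber has length $\lesssim h$ and meets $\Gammalin\cap T$ (or its affine extension within $T$) in a point. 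Using the one-dimensional estimate that a degree-$k$ polynomial $p$ on an interval $I$ of length $L$ satisfies $\|p\|_{L^2(I)}^2 \lesssim L\,|p(x_0)|^2 + L^2\|p'\|_{L^2(I)}^2$ for any $x_0\in I$ — with the constant depending only on $k$ — and then integrating over the transversal cross-section, one recovers a bound of $\|\hat u_h\|_{L^2(T)}^2$ by $h\int_{\Gammalin\cap T}|\hat u_h|^2$ plus $h^2\|\nlin\cdot\nabla\hat u_h\|_{L^2(T)}^2$. The subtle point is that not every fiber through $T$ actually hits $\Gammalin\cap T$ itself (only its extension within the plane); here one uses shape-regularity of $T$ and the fact that $\Gammalin$ cuts $T$ so that $\meas_2(\Gammalin\cap T) \gtrsim$ (cross-sectional area), together with a trace/Poincaré inequality on the planar polynomial to replace the pointwise value at a possibly-exterior point by the $L^2$-average over $\Gammalin\cap T$ plus an $h\|\nabla_{\Gammalin}\hat u_h\|$ term. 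This kind of argument is the one referenced elsewhere in the paper (it is the analogue for curved/isoparametric elements of \cite[Lemma 4.5]{BHL15} and \cite[Lemma 6.3]{reusken2015}), but it must now be run uniformly in how $\Gammalin$ cuts~$T$.

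Once the local estimate is in hand, I would sum over $T\in\mTGamma$, giving
\[
  h^{-1}\|\hat u_h\|_{L^2(\OGamma)}^2
  \;\lesssim\;
  h\,\|\nlin\cdot\nabla\hat u_h\|_{L^2(\OGamma)}^2
  \;+\;
  h^{-1}\|\hat u_h\|_{L^2(\Gammalin)}^2
  \;+\;
  h\,\|\nabla_{\Gammalin}\hat u_h\|_{L^2(\Gammalin)}^2 ,
\]
and then transport back to $\OGamma_\Theta$. For the middle term I would not want a spurious $h^{-1}\|u_h\|_{L^2(\Gamma_h)}^2$ on the right-hand side of the final claim, so I would instead invoke the $L^2(\Gamma_h)$ trace inequality \eqref{Hansboest} (in transformed coordinates) to bound $h^{-1}\|u_h\|_{L^2(\Gamma_h)}^2 \lesssim h^{-2}\|u_h\|_{L^2(\OGamma_\Theta)}^2 + \|\nabla u_h\|_{L^2(\OGamma_\Theta)}^2$, which is circular, so the cleaner route is: keep $h^{-1}\|\hat u_h\|_{L^2(\Gammalin)}^2$ and note it is $\sim h^{-1}\|u_h\|_{L^2(\Gamma_h)}^2 \lesssim a_h(u_h,u_h) + \tilde s_h$ only after using a surface Poincaré-type estimate together with the zero-mean condition $u_h\in\Vkz$. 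Concretely, on $\Vtracez$ we have the Poincaré inequality $\|u_h\|_{L^2(\Gamma_h)}^2 \lesssim \|\nabla_{\Gamma_h} u_h\|_{L^2(\Gamma_h)}^2 = a_h(u_h,u_h)$ (this is \eqref{eq:poincare-gamma-h}, valid with an $h$-independent constant since $\Gamma_h$ is a uniformly $\mathcal O(h^k)$-close approximation of the fixed smooth $\Gamma$). Feeding this in, the middle term becomes $\lesssim h^{-1} a_h(u_h,u_h)$ — which is too big by a factor $h^{-1}$.

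So the correct balancing, and the reason the stabilization is needed at all, is the following: the local estimate should be used in the sharper form where the $\Gammalin$-surface term appears with a good power. Rewriting, $h^{-1}\|\hat u_h\|_{L^2(\Gammalin\cap T)}^2$ is exactly (up to $\mathcal O(1)$ and the transport) the quantity $h^{-1}\|u_h\|_{L^2(\Gamma_T)}^2$, and summed this is $h^{-1}\|u_h\|_{L^2(\Gamma_h)}^2$; by the Poincaré inequality on $\Vtracez$ and the trivial bound $\|\nabla_{\Gamma_h}u_h\| \le \|\nabla u_h\|$ restricted to $\Gamma_h$, this is $\lesssim h^{-1}a_h(u_h,u_h)$. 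Meanwhile $h\|\nabla_{\Gammalin}\hat u_h\|_{L^2(\Gammalin)}^2 \lesssim h\,a_h(u_h,u_h)$. The genuinely dangerous term is $h\|\nlin\cdot\nabla\hat u_h\|_{L^2(\OGamma)}^2$, which is $\sim \rho_s^{-1}\cdot\rho_s\|\nlin\cdot\nabla\hat u_h\|^2 \lesssim \rho_s^{-1}\,s_h(u_h,u_h)\cdot(\text{const})$ provided $\rho_s\gtrsim h$ — wait, we need $h\|\nlin\cdot\nabla\hat u_h\|^2 \lesssim s_h(u_h,u_h) = \rho_s\|n_h\cdot\nabla u_h\|_{L^2(\OGamma_\Theta)}^2 \sim \rho_s\|\nlin\cdot\nabla\hat u_h\|_{L^2(\OGamma)}^2$, which holds precisely because $\rho_s \gtrsim h$. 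Hence, assembling, $h^{-1}\|u_h\|_{L^2(\OGamma_\Theta)}^2 \lesssim s_h(u_h,u_h) + h^{-1}a_h(u_h,u_h)$; the factor $h^{-1}$ in front of $a_h$ is not what \eqref{conds3A} asks. I therefore suspect the actual proof refines the local estimate so that the surface term enters as $h\|\hat u_h\|_{L^2(\Gammalin\cap T)}^2$ rather than $h^{-1}\|\cdot\|^2$ — i.e. one proves $h^{-1}\|\hat u_h\|_{L^2(T)}^2 \lesssim h\|\nlin\cdot\nabla\hat u_h\|_{L^2(T)}^2 + h^{-1}\|\hat u_h\|_{L^2(\Gammalin\cap T)}^2$ is too lossy and instead the pointwise-on-a-fiber value is controlled by a \emph{cell average over a neighboring uncut cell} via a chain of ghost-penalty-like element-to-element jumps; but with the \emph{volume} stabilization no jumps are needed because one can walk within the single cut cell. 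In any case the clean statement I would target is the per-element bound above with the surface term $h^{-1}\|\hat u_h\|_{L^2(\Gammalin\cap T)}^2$ \emph{replaced} by $h^{-1}\|u_h\|_{L^2(\Gamma_T)}^2$ handled globally via Poincaré \emph{on the coarser mesh of uncut-plus-cut elements} — this is the step I expect to be delicate and is where the isoparametric curved-element complication (Remark~\ref{comextra}) bites, since the fibers of $\Theta_h(T)$ are curved and \eqref{eq:lsetdist}, \eqref{eq:phih1} must be used to keep the one-dimensional polynomial estimates uniform. Summing up and choosing constants, \eqref{conds3A} follows for $h$ small enough. The main obstacle, to restate plainly, is the uniform-in-cut-position one-dimensional/fiberwise polynomial estimate on the curved cut element, and correctly bookkeeping the powers of $h$ so that the surface term is absorbed into $a_h(u_h,u_h)$ (not $h^{-1}a_h$) using the zero-mean constraint and a Poincaré inequality on $\Vtracez$.
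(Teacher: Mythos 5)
Your overall strategy is the right one and matches the paper's in spirit (fiberwise one-dimensional integration in a normal-like direction, absorption of the resulting surface term via the Poincar\'e inequality \eqref{eq:poincare-gamma-h} on $\Vtracez$, and absorption of the normal-derivative volume term into $s_h$ using $\rho_s\gtrsim h$). However, there are two problems. The first is a bookkeeping slip that you then misdiagnose as a fundamental obstruction: your own in-text fiber computation gives $\|\hat u_h\|_{L^2(T)}^2\lesssim h\|\hat u_h\|_{L^2(\Gammalin\cap T)}^2+h^2\|\nlin\cdot\nabla\hat u_h\|_{L^2(T)}^2$, which after dividing by $h$ yields the surface term with coefficient $1$, \emph{not} $h^{-1}$ as in your displayed ``decisive inequality.'' With the correct coefficient the global assembly closes immediately: $h^{-1}\|u_h\|_{L^2(\OGamma_\Theta)}^2\lesssim\|u_h\|_{L^2(\Gamma_h)}^2+h\|n_h\cdot\nabla u_h\|_{L^2(\OGamma_\Theta)}^2\lesssim a_h(u_h,u_h)+s_h(u_h,u_h)$, with no spurious $h^{-1}a_h$ and no need for the ``refined'' estimate you speculate about. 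This is exactly the scaling of the paper's fundamental estimate \eqref{fundest} in Lemma~\ref{lemJoerg-local} and of Corollary~\ref{coro:cond3A}.

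The second problem is a genuine gap: you never actually prove the local estimate for elements whose fibers leave the cut region without meeting the surface (your ``fiber hits only the affine extension of $\Gammalin\cap T$'' scenario, which is the same difficulty as Figure~\ref{fig:obtuse-neighborhood} in the paper). You gesture at shape regularity, a planar trace inequality, and ghost-penalty-like walks, but explicitly leave this as ``the step I expect to be delicate,'' and the extra term $h\|\nabla_{\Gammalin}\hat u_h\|_{L^2(\Gammalin\cap T)}^2$ you introduce to compensate is not justified. The paper resolves this differently and more cleanly: the fundamental estimate of Lemma~\ref{lemJoerg-local} is proved for arbitrary $H^1$ functions on a \emph{graph region} $U_\gamma$ over a surface patch (fibers in the direction of the exact normal $n_\Gamma$, via the coarea formula), so no polynomial structure is needed there; the finite element structure enters only in the localization, where Lemma~\ref{la:norm-B} shows that on $V_{h,\Psi}^k$ it suffices to control the $L^2$ norm on balls $B_T\subset T$ of radius $\gtrsim h$, and Lemma~\ref{la:as-local-tubular-neighborhoods} constructs, for every cut element, a half-ball graph region $U_{\gamma_T}\subseteq\OGamma_{\Psi}$ containing such a ball (distinguishing whether $\Gamma\cap T$ is within $\delta_1 h$ of $\partial\OGamma_{\Psi}$ or not). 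The geometric perturbation $\OGamma_{\Psi}$ versus $\OGamma_\Theta$ and $n_\Gamma$ versus $n_h$ is then handled by Lemma~\ref{lemJoergpert}, much as you propose. Without an argument replacing this localization step, your proof is incomplete precisely at the point that makes the result nontrivial, namely uniformity with respect to how the surface cuts the mesh.
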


\begin{proof}
 The analysis is given in the next section, cf. Corollary \ref{coro:cond3A}. 
\end{proof}
\subsection{Proof of Lemma~\ref{lemma3A}} \label{sectproof3A}
In the neighborhood $U_r$ of $\Gamma$ we use the following local coordinate system, cf.~\eqref{decompo}.  For $x \in U_r$ We write
\begin{equation} \label{loccoordinate} x=(\xi,s),~~~ \xi=p(x) \in \Gamma,~s = {\rm sdist}(x)\in [-r,r],~~\text{i.e., } x= \xi + s\, n(\xi).
\end{equation} 
Let $\gamma \subset \Gamma$ be a simply connected subdomain of $\Gamma$ with ${\rm meas_2}(\gamma) >0$ (e.g., $\gamma=\Gamma$). Below, we consider neighborhoods $U_\gamma$ of $\Gamma$ which have the form
\begin{equation} \label{defU}
 U_\gamma = \{ (\xi,s) \mid \xi \in \gamma, -g(\xi)h\le s\le G(\xi)h \},
\end{equation}
with scalar Lipschitz functions $g \ge 0$, $G\ge 0$. This means that $U_\gamma$ is bounded by the graphs of $g$ and $G$ over $\Gamma$ (when mapped in the normal direction), cf. the sketch in Figure \ref{fig:ass1} below.

The following lemma is of fundamental importance in our analysis.
\begin{lemma}\label{lemJoerg-local}
Let $U_\gamma$ be a set as in \eqref{defU} and assume $\|g+G\|_{L^\infty(\gamma)} \lesssim 1$. The following holds:
\begin{equation} \label{fundest}
 \|u\|_{L^2(U_\gamma)}^2 \lesssim h \|u\|_{L^2(\gamma)}^2 + h^2 \|n_\Gamma\cdot \nabla u\|_{L^2(U_\gamma)}^2 \quad \text{for all} ~~u \in H^1(U_\gamma).
\end{equation}
\end{lemma}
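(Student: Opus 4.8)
The plan is to prove \eqref{fundest} by working in the tubular coordinates $(\xi,s)$ from \eqref{loccoordinate} and exploiting the fundamental theorem of calculus in the normal direction. The key observation is that $n_\Gamma\cdot\nabla u$ is precisely the derivative $\partial_s u$ of $u$ along normal lines. So for a.e. $\xi\in\gamma$ and any $s\in[-g(\xi)h,G(\xi)h]$, I would write
\[
 u(\xi,s) = u(\xi,0) + \int_0^s \partial_t u(\xi,t)\,\d t,
\]
square both sides, use $(a+b)^2\le 2a^2+2b^2$, and bound the integral term by Cauchy--Schwarz: $\bigl|\int_0^s \partial_t u(\xi,t)\,\d t\bigr|^2 \le |s|\int_{-g(\xi)h}^{G(\xi)h} |\partial_t u(\xi,t)|^2\,\d t \lesssim h \int_{-g(\xi)h}^{G(\xi)h} |\partial_t u(\xi,t)|^2\,\d t$, using $\|g+G\|_{L^\infty(\gamma)}\lesssim 1$ so that $|s|\lesssim h$. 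This gives a pointwise-in-$\xi$ bound on $|u(\xi,s)|^2$.

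Next I would integrate in $s$ over the fiber $[-g(\xi)h,G(\xi)h]$: the first term $u(\xi,0)^2$ picks up a factor $(g(\xi)+G(\xi))h \lesssim h$, and the second term picks up a factor $\lesssim h\cdot h = h^2$ (the extra $h$ from integrating the constant-in-$s$ inner integral over a fiber of length $\lesssim h$). Then I integrate in $\xi$ over $\gamma$. One has to be slightly careful with the change of variables from Cartesian coordinates on $U_\gamma$ to the $(\xi,s)$ coordinates: the Jacobian is $\prod_i(1-s\kappa_i(\xi))$ where $\kappa_i$ are the principal curvatures of $\Gamma$, and since $|s|\le r$ with $r$ small this Jacobian is bounded above and below by constants depending only on $\Gamma$. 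Hence the surface integral $\int_\gamma u(\xi,0)^2\,\d\xi = \|u\|_{L^2(\gamma)}^2$ (possibly up to the harmless metric factor on $\Gamma$ itself, which is $1$), and $\int_\gamma\int |\partial_s u|^2\,\d s\,\d\xi \simeq \|n_\Gamma\cdot\nabla u\|_{L^2(U_\gamma)}^2$, yielding exactly \eqref{fundest}.

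There is a minor technical point about the regularity needed to justify the fundamental theorem of calculus along fibers: for $u\in H^1(U_\gamma)$ this is standard by a density argument (approximate $u$ by smooth functions, or invoke the characterization of $H^1$ via absolute continuity on a.e. line), and the measurability of $\xi\mapsto u(\xi,0)$ together with the trace estimate $\|u(\cdot,0)\|_{L^2(\gamma)}\lesssim \|u\|_{H^1(U_\gamma)}$ makes the right-hand side finite. I would mention this briefly rather than belabor it.

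The main obstacle, as I see it, is not any single hard estimate but rather handling the \emph{variable} fiber lengths $g(\xi)h$ and $G(\xi)h$ cleanly. Because $U_\gamma$ is not a product set, one must keep the $\xi$-dependence of the integration limits through the computation; the saving grace is the uniform bound $\|g+G\|_{L^\infty(\gamma)}\lesssim 1$, which lets every fiber-length factor be absorbed into the constant hidden in $\lesssim$. A secondary subtlety is that the inner $s$-integral in the Cauchy--Schwarz step is taken over the \emph{whole} fiber $[-g(\xi)h,G(\xi)h]$ rather than just $[0,s]$; this is a deliberate (and harmless) over-estimate that decouples the inner integral from the outer $s$-variable, which is what makes the subsequent integration in $s$ trivial. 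Once these bookkeeping issues are dispatched, the estimate follows directly.
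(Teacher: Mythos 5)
Your proposal is correct and follows essentially the same route as the paper's proof: the fundamental theorem of calculus along the normal fibers, Cauchy--Schwarz to decouple the fiber integral, integration over the fiber of length $\lesssim h$, and a change of variables to $(\xi,s)$-coordinates with a Jacobian bounded above and below (the paper phrases this via the coarea formula for the closest-point projector, with normal Jacobian $\det(I+sH(\xi))\sim 1$, which is the same curvature factor you invoke). The only cosmetic difference is that the paper applies the fundamental theorem to $u^2$ and then uses a weighted Young inequality with weight $r_\xi=\meas_1(F_\xi)$, whereas you square after applying it to $u$; both yield the identical bound.
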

\begin{proof}
Let $u\in C^\infty(U_\gamma)$. For each $\xi\in\gamma$, let $F_\xi$ denote the line-segment $\{\xi +s n(\xi)\mid -g(\xi)h\le s\le G(\xi)h\}\subseteq U_\gamma$. From the fundamental theorem of integration, we get for each $x= \xi+s n (\xi)\in F_\xi$ that
\[
  u(x)^2= u(\xi)^2 + 2\int_0^s u(\xi+tn(\xi))n(\xi)\cdot\nabla u(\xi+tn(\xi)) \d t.
\]
The Cauchy--Schwarz inequality implies
\[
  u(x)^2\le u(\xi)^2 + 2\norm{u}_{L^2(F_\xi)}\norm{n\cdot\nabla u}_{L^2(F_\xi)}.
\]
We integrate over $x\in F_\xi$ and apply the inequality $2ab \le \frac12 r_\xi^{-1}a^2 + 2 r_\xi b^2$ with $r_\xi = \meas_1(F_\xi)$ to obtain
\[
  \norm{u}_{L^2(F_\xi)}^2\le 2r_\xi u(\xi)^2 + 4 r_\xi^2\norm{n\cdot\nabla u}_{L^2(F_\xi)}^2.
\]
To relate the integral over $U_\gamma$ and $F_\xi$, we use the coarea formula \cite{cuckerbook2013,federerbook1969} for the closest-point projector $p\colon U_\gamma\to \gamma \subset \Gamma$. We interpret $p$ as a map onto the 2-dimensional manifold $\gamma\subseteq\Gamma$. The ``level set'' of each $\xi\in\gamma$ is $F_\xi$. For a function $f\in C^\infty(U_\gamma)$, we get
\[
  \int_{U_\gamma} f \d x = \int_\gamma \int_{F_\xi} f J(x)^{-1} \d s \d \sigma(\xi),
\]
where $J(x)=J(\xi,s)$ is the so-called normal-Jacobian of $p(x)$. Elementary computations yield that $Dp(x)=P(\xi)(I +s H(\xi))^{-1} P(\xi)$ with $s={\rm sdist}(x)$ and the Hessian $H$ of ${\rm sdist}$, and $ J(\xi,s)^{-1}= \det(I +s H(\xi))$. From this we obtain $J(x)^{-1} \sim 1$.
%

Inserting $f=u^2$ into the coarea formula, we obtain $\norm{u}_{L^2(U_\gamma)}^2 \lesssim \int_\gamma r_\xi u^2 + \int_\gamma r_\xi^2 \norm{n\cdot\nabla u}_{L^2(F_\xi)}^2$. Using $r_\xi \lesssim h$ yields the result in \eqref{fundest}. A density argument completes the proof.
\end{proof}

This lemma shows that one can control the $L^2$-norm in the volume $U_\gamma$ with the normal derivative in the same volume (as used in the stabilization) and the $L^2$-norm on the surface. The result in \eqref{fundest} can be interpreted as a ``local version'' of \eqref{conds3A}. Below we will use this in combination with a localization argument to obtain a result as in \eqref{conds3A} up to a geometric error ($\OGamma_\Psi$ vs. $\OGamma_\Theta$). This geometric error can be dealt with as shown in Lemma~\ref{lemJoergpert}.

\subsubsection{Localization argument}
\begin{figure}[h!]
  \vspace*{-0.2cm}
  \begin{center}
    \begin{tikzpicture}[scale=1.5,line cap=round]
      \coordinate (A) at (0.0,0.0);
      \coordinate (B) at (1.0,0.0);
      \coordinate (C) at (2.0,0.0);
      \coordinate (D) at (3.0,0.0);
      \coordinate (E) at (0.0,1.0);
      \coordinate (F) at (1.0,1.0);
      \coordinate (G) at (2.5,2.0);
      \coordinate (H) at (2.0,1.0);
      \coordinate (I) at (3.0,1.0);
      \tikzstyle{mytriangles}=[line width=0.8pt, line join=round];
      \draw[mytriangles] (A) -- (F) -- (E) -- cycle;
      \draw[mytriangles] (A) -- (B) -- (F) -- cycle;
      \draw[mytriangles] (B) -- (G) -- (F) -- cycle;
      \draw[mytriangles] (B) -- (H) -- (G) -- cycle;
      \draw[mytriangles] (B) -- (C) -- (H) -- cycle;
      \draw[mytriangles] (C) -- (I) -- (H) -- cycle;
      \draw[mytriangles] (C) -- (D) -- (I) -- cycle;
      \node[right] at (D) {\color{black}$\OGamma_\Psi$};
      \tikzstyle{myborder}=[line width=1pt, line cap=round, line join=round];
      \draw[myborder] (A) -- (B) -- (C) -- (D);
      \draw[myborder] (E) -- (F) -- (G) -- (H) -- (I);
      \coordinate (MA) at (-0.5, 0.5);
      \coordinate (MB) at (3.5,0.5);
      \draw[line width=1pt,red!80!black] (MA) -- (MB) node[right]{$\Gamma$};
      \coordinate (P) at (2.0,1.6667);
      \coordinate (Q) at (2.5,0.5);
      \coordinate (R) at (2.5,1.0);
      \draw[line width=1pt, line join=round, green, fill=green,fill opacity=0.3] (H) -- (G) -- (P) -- cycle;
      \draw[line width=1pt, green]         (R) -- (Q);
      \draw[line width=1pt, green, dotted] (Q) -- (G);
    \end{tikzpicture}
  \end{center}
  \vspace*{-0.4cm}
  \caption{Depending on the shape of the $T\in\mT$, $\OGamma_\Psi$ is not a graph over $\Gamma$ as in \eqref{defU}.}
  \label{fig:obtuse-neighborhood}
\end{figure}
In general, we cannot expect $\OGamma_\Psi=U_\Gamma$ for some $U_\Gamma$ as in \eqref{defU}, cf.~Figure~\ref{fig:obtuse-neighborhood}. Therefore, we present a localization argument which is based on the following observation (lemma~\ref{la:norm-B} below): On the finite element space (as opposed to $H^1(U_\gamma)$ used in \eqref{fundest}), it suffices to control the $L^2$-norm on suitable subsets in order to get a bound on the $L^2(\OGamma_\Psi)$-norm. We apply the localization argument to the triangulation $\mT^\Gamma_\Psi =\{ \, \Psi(T)\mid T\in \mT^\Gamma \,\}$ because this triangulation corresponds to the globally smooth surface $\Gamma$, cf.~Fig.~\ref{fig:trafos}. On $\mT^\Gamma_\Psi$ we define, with $F_h:=\Theta_h \circ \Psi^{-1}$ (cf. Fig.~\ref{fig:trafos}), the finite element space
 \[
  V_{h,\Psi}^k:= \{\, \tilde u_h = u_h \circ F_h \mid u_h \in \Vk \,\}.
 \]
Let $\{B_T\mid T\in \mT^\Gamma_\Psi\}$ be a collection of balls with $B_T\subset T$ and ${\rm radius}(B_T)=: r_T\gtrsim h$ for all $T\in \mT^\Gamma_\Psi$.
Let
\begin{equation}\label{eq:norm-B}
  \|u\|_B^2= \sum_{T\in\mT^\Gamma_\Psi} \norm{u}_{L^2(B_T)}^2.
\end{equation}

\begin{lemma}\label{la:norm-B}
  On $V_{h,\Psi}^k$ the uniform norm equivalence $\|\cdot\|_B \sim \|\cdot\|_{L^2(\OGamma_\Psi)}$ holds.
\end{lemma}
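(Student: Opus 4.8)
The plan is to prove the norm equivalence $\|\cdot\|_B \sim \|\cdot\|_{L^2(\OGamma_\Psi)}$ on the finite-dimensional space $V_{h,\Psi}^k$ by a scaling argument, element by element. Since $\|u\|_B^2 = \sum_{T\in\mT^\Gamma_\Psi} \|u\|_{L^2(B_T)}^2 \le \sum_{T\in\mT^\Gamma_\Psi} \|u\|_{L^2(T)}^2 = \|u\|_{L^2(\OGamma_\Psi)}^2$, the inequality $\|u\|_B \le \|u\|_{L^2(\OGamma_\Psi)}$ is trivial (indeed the $B_T\subset T$ are disjoint up to measure zero). The real content is the reverse bound $\|u\|_{L^2(\OGamma_\Psi)} \lesssim \|u\|_B$, which amounts to showing that, on each curved element $T = \Psi(\hat T)$ with $\hat T\in\mT^\Gamma$, one has $\|u\|_{L^2(T)} \lesssim \|u\|_{L^2(B_T)}$ for every $u\in V_{h,\Psi}^k$, uniformly in $T$ and $h$.

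First I would pull everything back to the straight reference simplex. For $u\in V_{h,\Psi}^k$ write $u = \tilde u_h = u_h\circ F_h$ with $u_h\in\Vk$, so $u_h = v_h\circ\Theta_h^{-1}$ with $v_h\in (V_h^k)_{|\OGamma}$; hence on $\hat T\in\mT^\Gamma$ we have $u\circ\Psi = v_h$, a polynomial of degree $\le k$ on the straight simplex $\hat T$. Transforming the integrals on $T = \Psi(\hat T)$ and on $B_T$ back via $\Psi$ introduces the Jacobian $\det D\Psi$, which by Lemma~\ref{lem:phih} (estimate \eqref{eq:phi2}) satisfies $\det D\Psi \sim 1$ uniformly. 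So it suffices to establish the purely polynomial statement: for the finite-dimensional space $\mP_k$ of polynomials of degree $\le k$ on $\hat T$, and any measurable subset $\hat B\subset\hat T$ with $|\hat B|\gtrsim |\hat T|$ (here $\hat B := \Psi^{-1}(B_T)$, whose measure is comparable to that of $B_T$, hence $\gtrsim h^3 \sim |\hat T|$), one has $\|p\|_{L^2(\hat T)} \lesssim \|p\|_{L^2(\hat B)}$ for all $p\in\mP_k$, with a constant depending only on $k$, on the shape regularity of $\mT$, and on the ratio $|\hat B|/|\hat T|$. This in turn follows by the standard Bramble--Hilbert / compactness argument after scaling $\hat T$ to the unit simplex: on the unit simplex the map $p\mapsto \|p\|_{L^2(\hat B)}$ is a norm on the finite-dimensional space $\mP_k$ whenever $\hat B$ has positive measure, and a compactness argument over the (compact) family of admissible $\hat B$ with $|\hat B|$ bounded below gives a uniform equivalence constant; scaling back to $\hat T$ of diameter $h$ produces matching powers of $h$ on both sides, so the constant is $h$-independent.

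I would then reassemble: summing the element-wise bound $\|u\|_{L^2(T)}^2 \lesssim \|u\|_{L^2(B_T)}^2$ over all $T\in\mT^\Gamma_\Psi$ gives $\|u\|_{L^2(\OGamma_\Psi)}^2 \lesssim \|u\|_B^2$, and combined with the trivial direction this yields $\|u\|_B \sim \|u\|_{L^2(\OGamma_\Psi)}$. The only point that requires care, and the main obstacle, is making the constant in the polynomial inequality $\|p\|_{L^2(\hat T)}\lesssim\|p\|_{L^2(\hat B)}$ genuinely independent of the shape and position of $\hat B$ inside $\hat T$ — it is crucial here that $B_T$ is a \emph{ball} of radius $r_T\gtrsim h$ (so $\hat B$ contains, after the bi-Lipschitz map $\Psi^{-1}$ with uniformly bounded distortion, a ball of radius $\gtrsim h$), because for arbitrary thin subsets of comparable measure no such uniform bound holds. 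One also uses the uniform local regularity of $\Psi$ and $\Psi^{-1}$ (bi-Lipschitz with constants $\sim 1$, from Lemma~\ref{lem:phih}) to transfer the ball property from $B_T$ to $\hat B$, and the quasi-uniformity and shape regularity of $\mT$ to make the reference-simplex reduction uniform.
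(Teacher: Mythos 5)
Your proposal is correct and follows essentially the same route as the paper's proof: the trivial direction from $B_T\subset T$, then pulling back via $\Psi$ (with Jacobian $\sim 1$ by Lemma~\ref{lem:phih}) to a polynomial of degree $k$ on the straight simplex, locating a ball of radius $\gtrsim h$ inside $\Psi^{-1}(B_T)$, and invoking the uniform $L^2$-norm equivalence for polynomials between a simplex and a comparably sized interior ball. Your explicit remarks on why the ball property (rather than mere measure comparability) is essential, and on the compactness/scaling argument behind the polynomial norm equivalence, spell out what the paper compresses into ``standard finite element analysis.''
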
\begin{proof}
As $B_T\subset T$ holds for all $T\in\mT^\Gamma_\Psi$, we immediately find $\|u\|_B \le \|u\|_{L^2(\OGamma_\Psi)}$ for all $u\in V_{h,\Psi}^k$.

To prove the other direction of the estimate, let $u\in V_{h,\Psi}^k$ and $T\in\mT^\Gamma_\Psi$ be arbitrary. We can write $T=\Psi(S)$, $S\in\mTGamma$. Furthermore, $u|_T=\hat u\circ \Psi^{-1}|_T$ for some polynomial $\hat u$ of degree $k$. Using $\Psi= I + \mathcal{O}(h)$, cf. \eqref{eq:phi2}, it follows that there is a ball $S_T \subset \Psi^{-1}(B_T)$ with ${\rm radius}(S_T) \gtrsim h$. From standard finite element analysis we obtain $\|\hat u\|_{L^2(S)} \sim \|\hat u\|_{L^2(S_T)}$. Using this and 
$
  \norm{u}_{L^2(T)}\sim \norm{\hat u}_{L^2(S)} $, which follows from \eqref{eq:phi2}, we get
\[
 \| u\|_{L^2(B_T)} \sim \|\hat u\|_{L^2(S_T)}\sim \norm{u}_{L^2(T)},
\]
and summing over $T\in\mT^\Gamma_\Psi$ completes the proof.
\end{proof}

The following assumption specifies quantitatively that each $T\in\mT^\Gamma_\Psi$ contains a sufficiently big ball which is ``locally visible'' from $\Gamma$ in a set as in \eqref{defU}.

\begin{assumption}\label{as:local-tubular-neighborhoods}
  For each $T\in\mT^\Gamma_\Psi$ there exists a set $U_{\gamma_T}$ as in \eqref{defU} with the following properties. The graph functions $g_T\ge 0$, $G_T\ge 0$ on $\gamma_T$ satisfy $\norm{g_T+G_T}_{L^\infty(\gamma_T)}\lesssim 1$. Furthermore, $U_{\gamma_T} \subseteq \OGamma_\Psi$, ${\rm diam}(\gamma_T) \lesssim h$ and $U_{\gamma_T}$ contains a ball $B_T\subset T$ with radius $r_T\gtrsim h$.
\end{assumption}

For a sketch of the domains involved in Assumption \ref{as:local-tubular-neighborhoods} we refer to Figure \ref{fig:ass1}.

\begin{figure}
  \begin{center}
    \begin{tikzpicture}[scale=0.45]
       \coordinate (A) at (0.5,6.1);
       \coordinate (B) at (3.5,0.8);
       \coordinate (C) at (6.2,5.5);
       \coordinate (D) at (9.7,0.6);
       \coordinate (E) at (12.3,5.5);
       \coordinate (F) at (1.0,0.7);
       \draw[line width=1pt] (D)
       to[in=6,out=182] (B)
       to[in= -90,out= 135] (A)
       to[in= 166,out=  -2] (C)
       to[in= 180,out=  14] (E)
       to[in=  70,out=-115] (D)
       to[in= -75,out= 135] (C)
       to[in=  83,out=-100] (B);
       \draw[fill=gray,opacity=0.2, draw=none] (D)
       to[in= -75,out= 135] (C)
       to[in=  83,out=-100] (B)
       to[in= 186,out=   2] (D) -- cycle;

       \coordinate (XA) at (-.25, 3.5);
       \coordinate (XB) at ( 2.5, 3.5);
       \coordinate (XC) at (10.0, 3.5);
       \coordinate (XD) at (12.0, 3.5);
       \draw[line width=1pt,red!80!black,->] (XA) -- (XD) node[right]{$\xi$};
       \draw[line width=2pt,blue!80!black] (XB) -- (XC);

       \coordinate (YA) at ( 5.6, 1.4);
       \coordinate (YB) at ( 7.6, 2.1);
       \coordinate (ZA) at ( 5.0, 4.5);
       \coordinate (ZB) at ( 6.8, 4.9);
       \draw[fill=green,opacity=0.3] (XB)
       to[ in= 160, out= -35] (YA)
       to[out= -20,  in=-130] (YB)
       to[out=  50,  in=-174] (XC)
       to[ in=  45, out= 180] (ZB)
       to[ in=   0, out=-135] (ZA)
       to[ in=  22, out= 180] (XB);
       
       \coordinate (BXB) at ( 2.5, 0.0);
       \coordinate (BXC) at (10.0, 0.0);
       \coordinate (BXM) at (6.25, 0.0);
       \draw[line width=0.75pt,blue!80!black,densely dotted,<->] (BXB) -- (BXC);
       \node[below,scale=0.8] at (BXM)
       {\color{blue!80!black} $\operatorname{diam}(\gamma_T)\lesssim h$};

       \coordinate (XX)  at ( 6.08, 2.6);
       \coordinate (XXA) at ( 4.83, 2.6);
       \coordinate (XXB) at ( 7.33, 2.6);
       \filldraw[orange,opacity=0.3,draw=black] (XX) circle (1.25);
       \draw[line width=0.75pt,orange!50!black,densely dotted,<->] (XXA) -- (XXB);
       \node[below,scale=0.8] at (XX)
       {\color{orange!50!black} $\gtrsim h$};
       
       \coordinate (GAA) at (-0.25, 5.15);
       \coordinate (GAB) at ( 12.0, 5.15);
       \coordinate (GBA) at (-0.25, 1.3);
       \coordinate (GBB) at ( 12.0, 1.3);
       \draw[line width=0.75pt,black,densely dotted] (GAA) -- (GAB);
       \draw[line width=0.75pt,black,densely dotted] (GBA) -- (GBB);
       \draw[line width=0.75pt,black,densely dotted,<->] (GAA) -- (GBA);
       \node[left,scale=0.8]  at (XA)
       { $\Vert g_T + G_T\Vert_{L^\infty({\color{blue!80!black}}\gamma_T)} \cdot h \lesssim h$};

       \coordinate (LR1) at ( 14.5, 5.5);
       \coordinate (LR2) at ( 15.5, 5.5);
       \draw[line width=1pt,red!80!black] (LR1) -- (LR2) node[right]{$\Gamma$};
       \coordinate (LB1) at ( 14.5, 4.5);
       \coordinate (LB2) at ( 15.5, 4.5);
       \draw[line width=2pt,blue!80!black] (LB1) -- (LB2) node[right]{$\gamma_T$};
       \coordinate (LU) at ( 15, 3.4);
       \coordinate (LUR) at ( 15.5, 3.3);
       \filldraw[green,opacity=0.3,draw=black] (LU) circle (0.5);
       \node[right] at (LUR) {\color{green!50!black} $U_{\gamma_T}$};
       \coordinate (LBT) at ( 15, 2.1);
       \coordinate (LBTR) at ( 15.5, 2.0);
       \filldraw[orange,opacity=0.3,draw=black] (LBT) circle (0.5);
       \node[right] at (LBTR) {\color{orange!50!black} $B_T$};
       \coordinate (LT) at ( 15, 0.8);
       \coordinate (LTR) at ( 15.5, 0.7);
       \filldraw[gray,opacity=0.2,draw=black] (LT) circle (0.5);
       \filldraw[fill=none,draw=black] (LT) circle (0.5);
       \node[right] at (LTR) {\color{gray!50!black} $T$};
     \end{tikzpicture} \vspace*{-0.25cm}
   \end{center}
   \caption{Sketch of the domains involved in Assumption \ref{as:local-tubular-neighborhoods}.}
   \label{fig:ass1}
\end{figure}

\begin{lemma}\label{lemJoerg}
  If Assumption \ref{as:local-tubular-neighborhoods} is satisfied, the following holds:
  \[
    \norm{u}_{L^2(\OGamma_\Psi)}^2 \lesssim h\norm{u}_{L^2(\Gamma)}^2 + h^2\norm{n_\Gamma\cdot\nabla u}_{L^2(\OGamma_\Psi)}^2 \quad\text{for all } u\in V_{h,\Psi}^k.
  \]
\end{lemma}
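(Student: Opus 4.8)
The plan is to localize the desired estimate to the (globally fitted) triangulation $\mT^\Gamma_\Psi$, apply Lemma~\ref{lemJoerg-local} on each of the tubular sets $U_{\gamma_T}$ provided by Assumption~\ref{as:local-tubular-neighborhoods}, and glue the local bounds together using the norm equivalence of Lemma~\ref{la:norm-B}. The only genuinely new work beyond invoking these three results is a finite-overlap bookkeeping, which is where I expect the main (though modest) difficulty to lie.

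First I would note that any $u\in V_{h,\Psi}^k$ is globally continuous and piecewise smooth on $\mT^\Gamma_\Psi$, hence Lipschitz on the bounded set $\OGamma_\Psi$, hence $u\in H^1(\OGamma_\Psi)$; in particular $u\in H^1(U_{\gamma_T})$ for every $T\in\mT^\Gamma_\Psi$, since $U_{\gamma_T}\subseteq\OGamma_\Psi$ by Assumption~\ref{as:local-tubular-neighborhoods}. Combining Lemma~\ref{la:norm-B}, the inclusion $B_T\subseteq U_{\gamma_T}$, and Lemma~\ref{lemJoerg-local} applied on each $U_{\gamma_T}$ with $\gamma=\gamma_T$ (whose hypothesis $\|g_T+G_T\|_{L^\infty(\gamma_T)}\lesssim1$ is exactly what the assumption guarantees), I obtain
\begin{align*}
  \norm{u}_{L^2(\OGamma_\Psi)}^2
  &\lesssim \sum_{T\in\mT^\Gamma_\Psi}\norm{u}_{L^2(B_T)}^2
   \le \sum_{T\in\mT^\Gamma_\Psi}\norm{u}_{L^2(U_{\gamma_T})}^2 \\
  &\lesssim h\sum_{T\in\mT^\Gamma_\Psi}\norm{u}_{L^2(\gamma_T)}^2
   + h^2\sum_{T\in\mT^\Gamma_\Psi}\norm{n_\Gamma\cdot\nabla u}_{L^2(U_{\gamma_T})}^2 ,
\end{align*}
with constants independent of $h$, of $T$, and of how $\Gamma$ cuts $\mT$ (here one only has to check that the constants coming from Lemma~\ref{lemJoerg-local} and from Assumption~\ref{as:local-tubular-neighborhoods} are uniform in $T$, which they are by hypothesis).

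It then remains to bound the two sums on the right by $\norm{u}_{L^2(\Gamma)}^2$ and $\norm{n_\Gamma\cdot\nabla u}_{L^2(\OGamma_\Psi)}^2$, respectively, which I would do via a finite-overlap argument for the families $\{\gamma_T\}$ and $\{U_{\gamma_T}\}$. Since $\operatorname{diam}(\gamma_T)\lesssim h$ and the normal extent of $U_{\gamma_T}$ is $\|g_T+G_T\|_{L^\infty(\gamma_T)}h\lesssim h$, one has $\operatorname{diam}(U_{\gamma_T})\lesssim h$; because $B_T\subseteq U_{\gamma_T}\cap T$ and $\operatorname{diam}(T)\lesssim h$, every $x\in U_{\gamma_T}$ satisfies $\operatorname{dist}(x,T)\lesssim h$, and every $\xi\in\gamma_T$ satisfies $\operatorname{dist}(\xi,T)\lesssim h$ (using $p(B_T)\subseteq\gamma_T$ and $|x-p(x)|\lesssim h$ on $U_{\gamma_T}$). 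By quasi-uniformity of $\mT$ and $\Psi=\id+\bigO(h)$, only $\bigO(1)$ elements $T\in\mT^\Gamma_\Psi$ can lie within distance $\lesssim h$ of any fixed point, so $\sum_T\chi_{\gamma_T}\lesssim1$ on $\Gamma$ and $\sum_T\chi_{U_{\gamma_T}}\lesssim1$ on $\OGamma_\Psi$. Hence $\sum_T\norm{u}_{L^2(\gamma_T)}^2\lesssim\norm{u}_{L^2(\Gamma)}^2$ and $\sum_T\norm{n_\Gamma\cdot\nabla u}_{L^2(U_{\gamma_T})}^2\lesssim\norm{n_\Gamma\cdot\nabla u}_{L^2(\OGamma_\Psi)}^2$, and inserting these into the displayed chain of inequalities yields the claim. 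The main obstacle is precisely this overlap count; once it is in place, the statement is an immediate assembly of Lemma~\ref{la:norm-B}, Lemma~\ref{lemJoerg-local}, and Assumption~\ref{as:local-tubular-neighborhoods}.
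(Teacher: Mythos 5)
Your proposal is correct and follows essentially the same route as the paper: localize via the norm equivalence of Lemma~\ref{la:norm-B} and the inclusion $B_T\subseteq U_{\gamma_T}$, apply Lemma~\ref{lemJoerg-local} on each $U_{\gamma_T}$, and conclude with a finite-overlap (finite intersection) argument based on ${\rm diam}(\gamma_T)\lesssim h$ and quasi-uniformity. The paper merely invokes the overlap count as ``standard,'' whereas you spell it out; the substance is identical.
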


\begin{proof}
Let $u\in V_{h,\Psi}^k$ be arbitrary. From Lemma \ref{la:norm-B} and Assumption \ref{as:local-tubular-neighborhoods}, we get
\[
  \norm{u}_{L^2(\OGamma_\Psi)}^2 \lesssim \norm{u}_B^2\le \sum_{T\in\mTGamma_\Psi} \norm{u}_{U_{\gamma_T}}^2.
\]
We apply Lemma \ref{lemJoerg-local} on each $T\in\mTGamma_\Psi$,
\[
  \sum_{T\in\mTGamma_\Psi} \norm{u}_{U_{\gamma_T}}^2 \lesssim \sum_{T\in\mTGamma_\Psi}\left( h\norm{u}_{L^2(\gamma_T)}^2 + h^2\norm{n_\Gamma\cdot\nabla u}_{L^2(U_{\gamma_T})}^2 \right).
\]
Due to ${\rm diam}(\gamma_T) \lesssim h$, cf. Assumption \ref{as:local-tubular-neighborhoods}, we can apply a standard finite intersection argument. Hence the right-hand side of the previous estimate is uniformly bounded by \vspace*{-0.25cm}
\[
  \sum_{T\in\mTGamma_\Psi}\left( h\norm{u}_{L^2(\Gamma\cap T)}^2 + h^2\norm{n_\Gamma\cdot\nabla u}_{L^2(T)}^2 \right).\vspace*{-0.25cm}
\]
\end{proof}

\noindent
Finally we treat Assumption \ref{as:local-tubular-neighborhoods}:
\begin{lemma}\label{la:as-local-tubular-neighborhoods}
  On a quasi-uniform family of triangulations, for sufficiently small $h$, Assumption \ref{as:local-tubular-neighborhoods} holds.
\end{lemma}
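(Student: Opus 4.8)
The plan is to reduce the verification of Assumption~\ref{as:local-tubular-neighborhoods} to a purely geometric statement about the piecewise planar interface $\Gammalin$ on the straight triangulation $\mT^\Gamma$, and then to prove that statement by a local construction on each cut simplex, using only shape regularity, quasi-uniformity, and the bounds $\|\nabla\phi\|\sim1$, $\|\nabla\hphi\|\sim1$.

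First I would isolate the ``scaling'' ingredients. By Lemma~\ref{lem:phih}, \eqref{eq:phi2}, we have $\Psi=\id+\mathcal{O}(h^2)$ and $D\Psi=I+\mathcal{O}(h)$ uniformly on $\OGamma$, so for $h$ small $\Psi$ is a bi-Lipschitz diffeomorphism whose Lipschitz constants and those of $\Psi^{-1}$ are arbitrarily close to $1$; moreover, by \eqref{err1} and the definitions of $n$ and $\nlin$, the field $\nlin$ differs from $n=\nabla\phi/\|\nabla\phi\|$ by $\mathcal{O}(h)$. It therefore suffices to construct, for each $S\in\mT^\Gamma$ (with $T:=\Psi(S)\in\mT^\Gamma_\Psi$), a ``linear tube''
\[
 W_S := \{\, x+t\,\nlin(x) \mid x\in\sigma_S,\ 0\le t\le\tau_S(x)\,\}
\]
over a flat, relatively open, simply connected patch $\sigma_S\subseteq\Gammalin$ with ${\rm meas}_2(\sigma_S)>0$ and ${\rm diam}(\sigma_S)\lesssim h$, such that $\|\tau_S\|_{L^\infty}\lesssim h$, $W_S\subseteq\OGamma$, $W_S$ contains a Euclidean ball $B$ of radius $\gtrsim h$, and $B$ together with the fibres of $W_S$ stays at distance $\gtrsim h$ from $\partial\OGamma$ (the fibres being allowed to end on $\Gammalin$, which is harmless since $\Psi(\Gammalin)=\Gamma\subseteq\OGamma_\Psi$). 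Applying $\Psi$ to $W_S$, reparametrising the image over $\gamma_T:=\Psi(\sigma_S)\subseteq\Gamma$ in the tubular coordinates \eqref{decompo}, and absorbing the resulting $\mathcal{O}(h)$ relative perturbation of the two bounding graphs, then produces a set $U_{\gamma_T}$ as in \eqref{defU} satisfying $\|g_T+G_T\|_{L^\infty(\gamma_T)}\lesssim1$, ${\rm diam}(\gamma_T)\lesssim h$ and $U_{\gamma_T}\subseteq\OGamma_\Psi$, and containing a ball $B_T\subset T$ of radius $\gtrsim h$; the distance-$\gtrsim h$ margin makes the perturbation harmless.

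For the construction of $W_S$ write $S^\pm:=S\cap\{\pm\hphi>0\}$ and let $\tilde S$ be the larger of the two pieces, so $|\tilde S|\gtrsim h^3$ and ${\rm diam}(\tilde S)\lesssim h$. A standard volume argument (e.g.\ via the John ellipsoid) shows that the convex body $\tilde S$ contains a ball of radius $\gtrsim h$, and since $|\tilde S|\lesssim h^2\,\max_{\tilde S}|\hphi|$ (the level slices $\tilde S\cap\{\hphi=c\}$ having area $\lesssim h^2$ and $\|\nabla\hphi\|\sim1$) we also get $\max_{\tilde S}|\hphi|\gtrsim h$. In the generic case in which $\Gammalin\cap S$ itself has inradius $\gtrsim h$, a short convexity argument then lets one place a ball of radius $\gtrsim h$ inside $\tilde S$ that sits ``above'' a $\gtrsim h$-subpatch of $\Gammalin\cap S$ and at distance $\gtrsim h$ from the mesh faces; taking $W_S$ to be the union of the $\nlin$-fibres in $\tilde S$ through a slightly shrunk concentric ball gives a tube with the required properties, $\tau_S\le{\rm diam}(S)\lesssim h$, and $\sigma_S\subseteq\Gammalin\cap S$ convex.

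The step I expect to be the main obstacle is the degenerate case in which $\Gammalin$ meets $S$ only in a small sliver: then $\Gamma\cap T=\Psi(\Gammalin\cap S)$ has diameter $\ll h$, so no admissible graph neighbourhood over a subset of $\Gamma\cap T$ can contain a ball of radius $\gtrsim h$, and one has to build the tube through a neighbouring simplex $S'$ across which $\Gammalin$ passes nondegenerately, arranging its $\nlin$-tube so that it enters $S$ and still hosts a $\gtrsim h$-ball lying inside $S$. The key enabling fact here --- that within a fixed number of mesh layers around any cut simplex there is a cut simplex $S'$ with ${\rm meas}_2(\Gammalin\cap S')\gtrsim h^2$ and inradius $\gtrsim h$ --- follows from $\|\nabla\hphi\|\sim1$, which makes $\Gammalin$ locally a uniformly Lipschitz graph and forbids it from being a sliver in every element of such a cluster, together with quasi-uniformity; the precise matching of the tube built in $S'$ with the simplex $S$ requires a careful but elementary case distinction on how $\Gammalin$ sits relative to $S$. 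Once the sets $W_S$, and hence the $U_{\gamma_T}$, are available, the bounds ${\rm diam}(\gamma_T)\lesssim h$, $\|g_T+G_T\|_{L^\infty}\lesssim1$ and $U_{\gamma_T}\subseteq\OGamma_\Psi$ are exactly the content of Assumption~\ref{as:local-tubular-neighborhoods}, which completes the proof.
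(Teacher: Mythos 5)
There is a genuine gap, and it sits exactly where you flag ``the main obstacle'': the degenerate/sliver case is not actually proved, and the route you sketch for it is not the right one. Two concrete problems. First, the relocation to a neighbouring, nondegenerately cut simplex $S'$ does not deliver what Assumption~\ref{as:local-tubular-neighborhoods} needs: the ball must lie in $T=\Psi(S)$ itself, and the $\nlin$-fibres emanating from $\Gammalin\cap S'$ need not enter $S$, need not carve out a ball of radius $\gtrsim h$ \emph{inside} $S$, and --- most importantly --- need not stay inside $\OGamma$ on the way, because the elements they traverse between $S'$ and $S$ need not be cut. This is precisely the phenomenon of Figure~\ref{fig:obtuse-neighborhood}: $\OGamma_\Psi$ is not a normal graph over $\Gamma$, so a fibre can leave $\OGamma_\Psi$ and re-enter it. For the same reason your stipulation that the ball and all fibres stay ``at distance $\gtrsim h$ from $\partial\OGamma$'' cannot always be met: there are cut elements for which \emph{every} point of $\Gamma\cap T$ lies within $O(h)$ of $\partial\OGamma_\Psi$ on one side, and any admissible tube necessarily hugs that boundary. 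Second, the dichotomy you set up (fat cut vs.\ sliver cut of the single element) is not the relevant one, and it sends you looking for a fat piece of $\Gammalin$ elsewhere when none is needed: the patch $\gamma_T$ is only required to be a subdomain of $\Gamma$ of diameter $\lesssim h$, not the trace of $\Gamma$ in any one element.

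The paper's proof supplies the two ideas your proposal is missing. (i) Center the construction at a point $\xi_0\in\Gamma\cap T$ and take $\gamma_T$ to be the full radius-$\delta_0 h$ patch of $\Gamma$ around $\xi_0$; then the corresponding ball $B_\ast(\xi_0;\delta_0 h)$ in the tubular coordinates automatically satisfies $|B_\ast(\xi_0;\delta_0 h)\cap T|\gtrsim h^3$ by shape regularity alone, cf.~\eqref{either1}, so the sliver issue never arises. (ii) To guarantee containment in $\OGamma_\Psi$, use \emph{half}-balls on the two sides of $\Gamma$ and the observation \eqref{either} that, since the two outer boundary components $\Gamma_\pm$ are separated by $\gtrsim h$ (quasi-uniformity), at least one half-ball always fits inside $\OGamma_{\Psi,\pm}$; the case distinction is then on whether $\Gamma\cap T$ contains a point at distance $>\delta_1 h$ from $\partial\OGamma_\Psi$ (take the side meeting $T$ in volume $\gtrsim h^3$) or lies entirely in the boundary strip (the ``near'' side meets $T$ in volume $\le\delta_1 h^3$, so the ``far'' side, which is contained in $\OGamma_\Psi$ by \eqref{either}, must carry the $\gtrsim h^3$ volume). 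Your generic-case cylinder construction over $\Gammalin$ and the $\Psi$-transfer are plausible as far as they go, but without a replacement for (i) and (ii) the proof does not close.
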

\begin{proof}
The idea of the proof is as follows. For each $T \in \mT^\Gamma_\Psi$ we will 
 define a set $U_{\gamma_T}$ as in \eqref{defU}, which equals a ``half-ball'' in the local coordinates \eqref{loccoordinate} with radius $\sim h$. In the construction  we will distinguish two cases, namely either $\Gamma_T:=\Gamma \cap T$ is ``close to'' $\partial\mT^\Gamma_\Psi$ or this is not the case, cf. Fig.~\ref{fig:sketchproofass1}. 

\begin{figure}
     \begin{tikzpicture}[scale=2]
       \coordinate (A) at (0.0,0.0);
       \coordinate (B) at (2.0,0.0);
       \coordinate (C) at (1.0,1.2);

       \coordinate (D) at (0.1,0.3);
       \coordinate (E) at (1.9,0.7);
       
       \coordinate (F) at (0.875,0.48);

       \coordinate (G) at (0.25,0.33);
       \coordinate (H) at (1.49,0.61);
       \coordinate (I) at (0.97,0.75);

       \coordinate (LR1) at (2.2,1.0);
       \coordinate (LR2) at (2.4,1.0);

       \coordinate (LB1) at (2.2,0.75);
       \coordinate (LB2) at (2.4,0.75);

       \coordinate (LU)  at (2.3,0.5);
       \coordinate (LBT) at (2.3,0.2);
       \coordinate (LUR)  at (2.4,0.5);
       \coordinate (LBTR) at (2.4,0.2);
       
       \draw[fill=green,opacity=0.3] (F) -- ++(13:.6) arc (13:193:.6);

       \draw[dotted,->] (F) -- ++(124:.6) node[left]{$\delta_0 h$};
       
       \draw[line width=1pt] (A) to[in=-170,out=10] (B) to[in=-60,out=135] (C) -- cycle;
       \draw[line width=1pt,red!80!black,->] (D) -- (E) node[right] {$\xi$};
       \draw[line width=2pt,blue!80!black] (G) -- (H);
       \filldraw[orange,opacity=0.3,draw=black] (I) circle (0.25);

       \draw[line width=1pt,red!80!black] (LR1) -- (LR2) node[right]{$\Gamma$};
       \draw[line width=2pt,blue!80!black] (LB1) -- (LB2) node[right]{$\gamma_T$};
       \filldraw[green,opacity=0.3,draw=black] (LU) circle (0.1);
       \node[right] at (LUR) {\color{green!50!black} $U_{\gamma_T}$};
       \filldraw[orange,opacity=0.3,draw=black] (LBT) circle (0.1);
       \node[right] at (LBTR) {\color{orange!50!black} $B_T$};
       \filldraw[draw=black] (F) circle (0.02) node[below right]{$\xi_0$};
     \end{tikzpicture}
     \begin{tikzpicture}[scale=2]
       \coordinate (A) at (0.0,0.0);
       \coordinate (AL) at (0.0,-0.1);
       \coordinate (AU) at (0.3,0.1);
       \coordinate (B) at (2.0,-0.05);
       \coordinate (C) at (1.0,1.2);
       
       \coordinate (D) at (-1.0,0.1);
       \coordinate (E) at (2.0,0.1);
       
       \coordinate (F) at (1.0,0.7);

       \coordinate (G) at (-0.6,0.1);
       \coordinate (H) at (1.2,0.1);
       
       \coordinate (I) at (0.74,0.4);

       \coordinate (LR1) at (2.2,1.0);
       \coordinate (LR2) at (2.4,1.0);

       \coordinate (LB1) at (2.2,0.75);
       \coordinate (LB2) at (2.4,0.75);

       \coordinate (LU)  at (2.3,0.5);
       \coordinate (LUR)  at (2.4,0.5);
       \coordinate (LBT) at (2.3,0.2);
       \coordinate (LBTR) at (2.4,0.2);

       \draw[fill=green,opacity=0.3] (AU) -- ++ (0.9,0.0) arc (6:173.5:0.9) -- cycle;

       \draw[dotted,->] (AU) -- (0.3,0.9) node[below left]{$\delta_0 h$};
       \filldraw[draw=black] (AU) circle (0.02) node[above right]{$\!\!\xi_0$};
       
       \draw[line width=1pt] (A) -- (B) to[in=-60,out=135] (C) -- cycle;
       \draw[line width=1pt,red!80!black,->] (D) -- (E) node[right]{$\xi$};
       \draw[line width=2pt,blue!80!black] (G) -- (H);
       \filldraw[orange,opacity=0.3,draw=black] (I) circle (0.31);

       \draw[line width=1pt,red!80!black] (LR1) -- (LR2) node[right]{$\Gamma$};
       \draw[line width=2pt,blue!80!black] (LB1) -- (LB2) node[right]{$\gamma_T$};
       \filldraw[green,opacity=0.3,draw=black] (LU) circle (0.1);
       \node[right] at (LUR) {\color{green!50!black} $U_{\gamma_T}$};
       \filldraw[orange,opacity=0.3,draw=black] (LBT) circle (0.1);
       \node[right] at (LBTR) {\color{orange!50!black} $B_T$};
     \end{tikzpicture}
     \caption{Sketch of the two cases: There either exists $\xi_0 \in \Gamma$, so that
       $\operatorname{dist}_{\ast}(\xi_0,\partial \Omega_{\Psi}^\Gamma) > \delta_1 h_{\min}$ (left) or $\operatorname{dist}_{\ast}(\Gamma,\partial \Omega_{\Psi}^\Gamma) < \delta_1 h_{\min}$ (right). According to the two cases the sets $U_{\gamma_T}$, $\gamma_T$ and $B_T$ can be found. Note that the sketch uses the local coordinates $(\xi,s)$ as in \eqref{defU}.}
     \label{fig:sketchproofass1}
\end{figure}

First we introduce (small) balls in the local coordinates. We define the distance ${\rm dist}_\ast\big((\xi,s),(\tilde \xi,\tilde s)\big):= \big( \|\xi - \tilde \xi\|_2^2 + |s-\tilde s|^2\big)^\frac12$, where $\|\cdot\|_2$ is the Euclidean distance and $(\xi,s)$ are the local coordinates as in \eqref{loccoordinate}.
This distance is equivalent to the Euclidean distance: there are constants $d_0 >0$ and $d_1$ such that for all $x=(\xi,s)$ and $\tilde x=(\tilde \xi, \tilde s)$ from $U_r$ we have 
\begin{equation} \label{normeq}
 d_0 \|x- \tilde x\|_2 \leq {\rm dist}_\ast\big((\xi,s),(\tilde \xi,\tilde s)\big) \leq d_1 \|x- \tilde x\|_2 .
\end{equation}
In this distance the balls with center $\xi_0 \in \Gamma$ are denoted by $B_\ast(\xi_0;\delta):= \{\, (\xi,s)\mid {\rm dist}_\ast \big((\xi_0,0),(\xi,s) \big) \leq \delta\,\}$. For defining  suitable half-balls we introduce some further notation. We define the part of the domain $\Omega^\Gamma_\Psi$ with negative (positive) level set values and the corresponding part of the outer boundary:
\begin{equation*}
  \OGamma_{\Psi,\mp} := \{\, x \in \OGamma_\Psi \mid \phi(x) \lessgtr 0\,\}, \quad 
 \Gamma_{\pm} := \partial\OGamma_{\Psi,\pm} \setminus \Gamma.
\end{equation*}
 For $\xi_0 \in \Gamma$ we define the ``half-balls''
 $B_\ast^{\pm}(\xi_0;\delta):=B_\ast(\xi_0;\delta)\cap \OGamma_{\Psi,\pm}$.
 Using the quasi-uniformity assumption on the family of triangulations one can show 
 that $\min\{\| x_+ - x_-\|_2 \mid x_\pm\in \Gamma_\pm\} \gtrsim h$. Hence, also
 $\min\{{\rm dist}_\ast((\xi_+, s_+), (\xi_-, s_-)) \mid (\xi_\pm, s_\pm)\in \Gamma_\pm\} \gtrsim h$ holds.
 Using this we conclude that there exists a $\delta_0 >0$ (independent of $h$) such that for all $\xi_0 \in \Gamma$
 \begin{equation} \label{either}
  B_\ast^+(\xi_0;\delta_0 h) \subset \OGamma_{\Psi,+} \quad \text{or}~~ B_\ast^-(\xi_0;\delta_0 h) \subset \OGamma_{\Psi,-}
 \end{equation}
 holds. In the remainder we take such a fixed $\delta_0 > 0$. One checks that such half-balls $B_\ast^\pm(\xi_0;\delta_0 h)$  are of the form $U_\gamma$ as in \eqref{defU}, with $\gamma:=\{\, \xi \in \Gamma \mid \|\xi- \xi_0\|_2 \leq \delta_0 h \,\}$ and $\|g+G\|_{L^\infty(\gamma)} \leq  \delta_0$, ${\rm diam (\gamma)} \lesssim h$. 

 Take $T \in \mT^\Gamma_\Psi$. We now show that such half-balls with center $\xi_0 \in \Gamma_T=\Gamma \cap T$
 satisfy the conditions required in Assumption \ref{as:local-tubular-neighborhoods}. Note that there exists a constant $\hat c>0$, depending only on the shape regularity of the triangulation, such that for all $\xi \in \Gamma_T$ we have $|B_\ast(\xi;\delta_0 h) \cap T| \geq \hat c h^3$. Hence
\begin{equation}\label{either1}
 |B_\ast^+(\xi;\delta_0 h) \cap T| \geq \frac12 \hat c h^3 \quad \text{or}~~|B_\ast^-(\xi;\delta_0 h) \cap T| \geq \frac12 \hat c h^3
\end{equation}
holds.
We introduce the following boundary strip. For fixed $\delta_1$ with $0< \delta_1 \leq \frac12 \delta_0$ we define
 \[
  \Gamma_{\delta_1}:= \{\, x \in \OGamma_\Psi \mid {\rm dist}_\ast(x, \partial\OGamma_\Psi) \leq \delta_1 h\,\}.
 \]
Then either $\Gamma_T \subset \Gamma_{\delta_1}$ or there exists $\xi_0 \in \Gamma_T$ with ${\rm dist}_\ast((\xi_0,0), \partial\OGamma_\Psi) > \delta_1 h$.
We first consider the latter case. By construction we have that both half-balls $B_\ast^+(\xi_0;\delta_1 h)$ and  $B_\ast^-(\xi_0;\delta_1 h)$ are  contained in  $\OGamma_\Psi$. We choose one of these, say $U_{\gamma_T}:=B_\ast^+(\xi_0;\delta_1 h) \subset \OGamma_\Psi$, such that $|U_{\gamma_T}\cap T| \gtrsim h^3$ holds, see Figure \ref{fig:sketchproofass1} (left) for a sketch.
\\
We now consider the case $\Gamma_T \subset \Gamma_{\delta_1}$. Take a $\xi_0 \in \Gamma_T$, hence ${\rm dist}_\ast((\xi_0,0), \partial\OGamma_\Psi) \leq \delta_1 h$.
Without loss of generality we assume that $\xi_0$ is closest to $\Gamma_-= \partial\OGamma_{\Psi,-}$, and thus, cf.~\eqref{either}, $B_\ast^+(\xi_0;\delta_0 h) \subset \OGamma_{\Psi,+}$. Note that
\[
 |B_\ast^-(\xi_0;\delta_0 h)\cap T| \leq |\Gamma_{\delta_1} \cap T| \leq \delta_1 h {\rm diam}(T)^2 \leq \delta_1 h^3. 
\]
Using this and taking $\delta_1:=\frac14 \min\{\delta_0, \hat c\}$, we conclude from \eqref{either1} that for $U_{\gamma_T}:=B_\ast^+(\xi_0;\delta_0 h) \subset \OGamma_\Psi $ we have
$|U_{\gamma_T} \cap T| \gtrsim h^3$ holds.

In both cases we have have $U_{\gamma_T}=B_\ast^+(\xi_0;\delta_i h) \subset \OGamma_\Psi$ ($i=0$ or $i=1$), with $ |U_{\gamma_T} \cap T| \gtrsim h^3$.
Due to shape regularity we can construct a ball with radius $r_T \gtrsim h$ and $B_T \subset T \cap U_{\gamma_T}$. Hence, for this $U_{\gamma_T}$ all conditions in Assumption \ref{as:local-tubular-neighborhoods} are satisfied. 

\end{proof}


\subsubsection{Geometric error} \label{sec:approxnormal}
In this section we treat the geometric error ($\OGamma_\Psi$ vs. $\OGamma_\Theta$) by a straightforward perturbation argument.
We assume that we have a quasi-uniform family of triangulations, hence Assumption \ref{as:local-tubular-neighborhoods} is satisfied (for $h$ sufficiently small).
\begin{lemma} \label{lemJoergpert}
 Let $\tilde n$ be such that $\|\tilde n - n_\Gamma\|_{\infty,\OGamma_\Theta} \lesssim h$ holds. For $h$ sufficiently small, the following holds:
  \begin{equation} \label{estpert}
    \|u_h\|_{L^2(\OGamma_\Theta)}^2 \lesssim h \|u_h\|_{L^2(\Gamma_h)}^2 + h^2 \|\tilde n \cdot \nabla u_h\|_{L^2(\OGamma_\Theta)}^2 \quad \text{for all}~~u_h \in V_{h,\Theta}^k.
  \end{equation}
\end{lemma}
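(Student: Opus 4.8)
The plan is to transport the estimate of Lemma~\ref{lemJoerg} from $\OGamma_\Psi$ to $\OGamma_\Theta$ via the map $F_h := \Theta_h \circ \Psi^{-1} \colon \OGamma_\Psi \to \OGamma_\Theta$ (cf.\ Fig.~\ref{fig:trafos}), which has the property $F_h(\Gamma) = \Gamma_h$. Given $u_h \in V_{h,\Theta}^k$, I set $\tilde u_h := u_h \circ F_h$, which by definition lies in $V_{h,\Psi}^k$. The preliminary step is to record the regularity of $F_h$: writing $DF_h = D\Theta_h \, (D\Psi)^{-1}$ along $\Psi^{-1}$, and using $D\Theta_h = D\Psi + \mathcal{O}(h^k)$ from \eqref{eq:phih1} together with $D\Psi = I + \mathcal{O}(h)$ from \eqref{eq:phi2}, one obtains (for $k \ge 1$) that $\|DF_h - I\|_{\infty,\OGamma_\Psi} \lesssim h$ and $\|F_h - \id\|_{\infty,\OGamma_\Psi} \lesssim h$, hence $\det DF_h \sim 1$ and the same bound for $DF_h^{-1}$, provided $h$ is small. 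Consequently the change of variables $x = F_h(y)$ gives the norm equivalences $\|u_h\|_{L^2(\OGamma_\Theta)} \sim \|\tilde u_h\|_{L^2(\OGamma_\Psi)}$, and, using that the surface Jacobian of $F_h|_\Gamma$ is also $\sim 1$ by Lemma~\ref{lem:phih}, $\|u_h\|_{L^2(\Gamma_h)} \sim \|\tilde u_h\|_{L^2(\Gamma)}$.

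The main step is to relate the two normal derivatives. By the chain rule, for $y \in \OGamma_\Psi$ and $x = F_h(y)$,
\[
  n_\Gamma(y) \cdot \nabla \tilde u_h(y) = \big( DF_h(y)\, n_\Gamma(y) \big) \cdot (\nabla u_h)(x) .
\]
I then bound $DF_h(y)\, n_\Gamma(y) - \tilde n(x)$ by $\mathcal{O}(h)$, decomposing it into three pieces: $DF_h(y)\, n_\Gamma(y) - n_\Gamma(y) = \mathcal{O}(h)$ (from $\|DF_h - I\| \lesssim h$ and $|n_\Gamma| = 1$); $n_\Gamma(y) - n_\Gamma(x) = \mathcal{O}(|y - x|) = \mathcal{O}(h)$ (Lipschitz continuity of the normal field $n_\Gamma = n \circ p$ on $U_r$ and $\|F_h - \id\| \lesssim h$); and $n_\Gamma(x) - \tilde n(x) = \mathcal{O}(h)$ by the hypothesis on $\tilde n$. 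Hence $|n_\Gamma(y) \cdot \nabla \tilde u_h(y)| \le |\tilde n(x) \cdot (\nabla u_h)(x)| + C h\,|(\nabla u_h)(x)|$, and squaring, integrating, and changing variables back yields
\[
  \|n_\Gamma \cdot \nabla \tilde u_h\|_{L^2(\OGamma_\Psi)}^2 \lesssim \|\tilde n \cdot \nabla u_h\|_{L^2(\OGamma_\Theta)}^2 + h^2 \|\nabla u_h\|_{L^2(\OGamma_\Theta)}^2 .
\]

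Finally, I apply Lemma~\ref{lemJoerg} to $\tilde u_h \in V_{h,\Psi}^k$ and substitute the three transformed quantities, obtaining
\[
  \|u_h\|_{L^2(\OGamma_\Theta)}^2 \lesssim h \|u_h\|_{L^2(\Gamma_h)}^2 + h^2 \|\tilde n \cdot \nabla u_h\|_{L^2(\OGamma_\Theta)}^2 + h^4 \|\nabla u_h\|_{L^2(\OGamma_\Theta)}^2 .
\]
The last term is removed with the finite element inverse inequality $\|\nabla u_h\|_{L^2(\OGamma_\Theta)} \lesssim h^{-1} \|u_h\|_{L^2(\OGamma_\Theta)}$ on the isoparametric space $V_{h,\Theta}^k$, which follows from Lemma~\ref{lemregu} and the standard inverse inequality on $V_h^k$; this turns $h^4 \|\nabla u_h\|_{L^2(\OGamma_\Theta)}^2$ into a term bounded by $c\,h^2 \|u_h\|_{L^2(\OGamma_\Theta)}^2$, which for $h$ sufficiently small is absorbed into the left-hand side, giving \eqref{estpert}. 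The only genuinely delicate point is the chain of perturbation estimates showing $DF_h\, n_\Gamma \approx \tilde n \circ F_h$ up to $\mathcal{O}(h)$; the remaining manipulations are routine changes of variables.
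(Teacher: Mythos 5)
Your proof is correct and follows essentially the same route as the paper's: transport via $F_h = \Theta_h\circ\Psi^{-1}$, apply Lemma~\ref{lemJoerg} to $\tilde u_h = u_h\circ F_h$, bound $\|(DF_h\, n_\Gamma)\circ F_h^{-1} - \tilde n\|_{\infty,\OGamma_\Theta}\lesssim h$ by a triangle inequality, and absorb the resulting $h^2\|u_h\|_{L^2(\OGamma_\Theta)}^2$ term via an inverse inequality for $h$ sufficiently small. The only difference is that you spell out the three-term perturbation estimate for the transported normal that the paper leaves implicit.
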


\begin{proof}
We use the homeomorphism $F_h = \Theta_h \circ \Psi^{-1}: \OGamma_\Psi \to \OGamma_\Theta$ (see also Figure \ref{fig:trafos}) which satisfies
\begin{equation}\label{lemJoergpert-1}
  \|I- F_h \|_{\infty,\OGamma_\Psi} + h\|I- DF_h \|_{\infty,\OGamma_\Psi} \lesssim h^{k+1},
\end{equation}
cf.~Lemma~\ref{lem:phih}. Take $u_h \in V_{h,\Theta}^k$ and define $\tilde u_h:=u_h \circ F_h\in V_{h,\Psi}^k$. Using standard transformation rules and the result in Lemma~\ref{lemJoerg} we obtain
\begin{align*}
  \|u_h\|_{L^2(\OGamma_\Theta)}^2 & \sim \|\tilde u_h \|_{L^2(\OGamma_\Psi)}^2 \lesssim h \|\tilde u_h\|_{L^2(\Gamma)}^2 + h^2 \|n_\Gamma \cdot \nabla \tilde u_h\|_{L^2(\OGamma_\Psi)}^2 \\
  & \sim h \| u_h\|_{L^2(\Gamma_h)}^2 + h^2 \|(DF_h n_\Gamma)\circ F_h^{-1} \cdot \nabla u_h\|_{L^2(\OGamma_\Theta)}^2 .
\end{align*}
From a triangle-inequality, $\|\tilde n - n_\Gamma\|_{\infty,\OGamma_\Theta} \lesssim h$ and \eqref{lemJoergpert-1} we get, for $h$ sufficiently small:
\[
 \|(DF_h n_\Gamma)\circ F_h^{-1} - \tilde n\|_{\infty, \OGamma_\Theta} \lesssim
 h.
\]
Hence we obtain, using an inverse inequality:
\[
 \|u_h\|_{L^2(\OGamma_\Theta)}^2 \lesssim h \| u_h\|_{L^2(\Gamma_h)}^2 + h^2 \| \tilde n \cdot \nabla u_h\|_{L^2(\OGamma_\Theta)}^2 + h^2 \|u_h\|_{L^2(\OGamma_\Theta)}^2.
\]
For $h$ sufficiently small, we can adsorb the last term on the right-hand side in the term on the left hand-side, and this completes the proof.
\end{proof}

On $\Gamma$, there holds the Poincar{\'e} inequality $\norm{u}_{L^2(\Gamma)}\lesssim \norm{\nabla_\Gamma u}_{L^2(\Gamma)}$ for all $u\in H^1_\ast(\Gamma)$. Using the properties of the mapping $F_h\colon \OGamma_\Theta\to\OGamma_\Psi$ in Lemma \ref{lem:phih}, one can derive a Poincar{\'e} inequality on $\Gamma_h$ (see e.g. \cite[Remark 5.3]{reusken2015}), 
\begin{equation}\label{eq:poincare-gamma-h}
  \norm{u_h}_{L^2(\Gamma_h)}\lesssim \norm{\nabla_{\Gamma_h} u_h}_{L^2(\Gamma_h)}\quad \text{for all } u_h \in \Vkz.
\end{equation}

\begin{corollary}\label{coro:cond3A} If the scaling assumption \eqref{eq:as-rho} holds, the normal derivative volume stabilization satisfies \eqref{conds3A} for $h$ sufficiently small. 
\end{corollary}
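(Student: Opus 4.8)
The plan is to combine the localization estimate of Lemma~\ref{lemJoergpert} with the Poincar{\'e} inequality \eqref{eq:poincare-gamma-h} on $\Gamma_h$ and the lower bound on the stabilization parameter in \eqref{eq:as-rho}. All the substantial geometric work (the coarea argument in Lemma~\ref{lemJoerg-local}, the localization via Assumption~\ref{as:local-tubular-neighborhoods}, and the perturbation from $\OGamma_\Psi$ to $\OGamma_\Theta$) has already been carried out upstream, so what remains is essentially bookkeeping of the powers of $h$ and $\rho_s$.

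First I would invoke Lemma~\ref{lemJoergpert} with the concrete choice $\tilde n = n_h$, where $n_h$ is the discrete normal of Lemma~\ref{Lemnormals}. This is admissible: by \eqref{normalres1} we have $\|n_h - n\|_{\infty,\OGamma_\Theta}\lesssim h^k\lesssim h$ since $k\ge 1$ and $h$ is small. Thus, for $h$ sufficiently small and all $u_h\in\Vk$,
\[
 \|u_h\|_{L^2(\OGamma_\Theta)}^2 \lesssim h\,\|u_h\|_{L^2(\Gamma_h)}^2 + h^2\,\|n_h\cdot\nabla u_h\|_{L^2(\OGamma_\Theta)}^2 .
\]
Next, restricting to $u_h\in\Vkz$, the Poincar{\'e} inequality \eqref{eq:poincare-gamma-h} bounds the first term on the right by $h\,\|\nabla_{\Gamma_h}u_h\|_{L^2(\Gamma_h)}^2 = h\,a_h(u_h,u_h)$. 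For the second term, the lower bound $h\lesssim\rho_s$ from \eqref{eq:as-rho} together with the definition \eqref{eq:def-normal-gradient-volume-stab} of the normal derivative volume stabilization gives
\[
 h^2\,\|n_h\cdot\nabla u_h\|_{L^2(\OGamma_\Theta)}^2 \lesssim h\,\rho_s\,\|n_h\cdot\nabla u_h\|_{L^2(\OGamma_\Theta)}^2 = h\,s_h(u_h,u_h).
\]
Adding the two contributions yields
\[
 \|u_h\|_{L^2(\OGamma_\Theta)}^2 \lesssim h\big(a_h(u_h,u_h)+s_h(u_h,u_h)\big)\quad\text{for all }u_h\in\Vkz,
\]
which, after dividing by $h$, is exactly condition \eqref{conds3A}.

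I do not expect a genuine obstacle at this final stage: the only points needing a little care are verifying that $n_h$ is an admissible substitute for $\tilde n$ in Lemma~\ref{lemJoergpert} (which relies crucially on the $\mathcal{O}(h^k)$ normal approximation of Lemma~\ref{Lemnormals}) and matching the $h$- and $\rho_s$-powers so that the surface term $a_h$ and the volume stabilization term $s_h$ carry the same weight $h^{-1}$ on the right-hand side. The real difficulty of the whole argument lies in the preceding lemmas, in particular in Lemma~\ref{lemJoerg-local} and in establishing Assumption~\ref{as:local-tubular-neighborhoods} for quasi-uniform meshes.
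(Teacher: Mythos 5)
Your proposal is correct and follows essentially the same route as the paper's own proof: take $\tilde n = n_h$ in Lemma~\ref{lemJoergpert} (admissible by \eqref{normalres1}), absorb the volume term using $h\lesssim\rho_s$, and handle the surface term with the Poincar\'e inequality \eqref{eq:poincare-gamma-h} and $\|\nabla_{\Gamma_h}u_h\|_{L^2(\Gamma_h)}^2=a_h(u_h,u_h)$. The bookkeeping of powers of $h$ and $\rho_s$ matches the paper exactly.
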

\begin{proof} Take $\tilde n=n_h$ as defined in Lemma \ref{Lemnormals}, cf.~\eqref{normalres1}. Hence, $\|\tilde n - n_\Gamma\|_{\infty,\OGamma_\Theta} \lesssim h$ holds. Using this, Lemma \ref{lemJoergpert} and $h\lesssim \rho_s$ we get
\[
  h^{-1} \| u_h\|_{L^2(\OGamma_\Theta)}^2 \lesssim \|u_h\|_{L^2(\Gamma_h)}^2 + s_h(u_h, u_h) \quad \text{for all}~~u_h\in \Vk.
\]
The assertion follows from the Poincar{\'e} inequality \eqref{eq:poincare-gamma-h} and $\norm{\nabla_{\Gamma_h} u_h}_{L^2(\Gamma_h)}^2 = a_h(u_h,u_h)$.
\end{proof}

%
%

\section{Numerical example} \label{sec:numex}
In this section we present numerical results for the isoparametric trace FEM explained in section~\ref{sec:itracefem} with a stabilization $s_h(\cdot,\cdot)$ as in section \ref{sec:normal-derivative-volume}. 
We first briefly discuss how we solve the linear systems arising from the discretization of the Laplace--Beltrami operator on the finite element spaces $\Vk$. The linear systems are singular because $\Vk$ contains constant functions.

\begin{remark}[Solution of (singular) linear systems] \label{sec:solvelinsys}\rm
Let $\bS \in \rr^{n \times n}$ be the stiffness matrix arising from the discretization such that 
$\bS_{i,j} = A_h(\varphi_j,\varphi_i)$ for basis functions $\varphi_i,\varphi_j$ of $\Vk$, $i,j \in \{1,..,n\}$, $n = \mathrm{dim}(V_{h,\Theta}^k)$, cf. section \ref{sec:condition-number}.
We seek a solution of 
\[
  \bS \bu = \bff \text{ with } \bu \text{ subject to } \langle \bc, \bu \rangle = 0,
\]
cf. \eqref{cdef}.
Here $\bu \in \rr^n$ denotes the coefficient vector of the solution such that the discrete solution is $u_h = \sum_{i=1}^n \bu_i \varphi_i$, $\bff \in \rr^n$ denotes the right-hand side functional, and $\bc \in \rr^n$ describes the constraint that the solution should be mean value free, cf.~\eqref{cdef}.
We denote the coefficient vector of the discrete function which is (constant) one on $\Theta_h(\Omega_h^\Gamma)$ by $\be$ and note that $\mathrm{ker}(\bS) = \mathrm{span}(\be)$. 
Note that $\bc$ represents a functional (in $(V_{h,\Theta}^k)'$) whereas $\be$ represents a discrete function (in $V_{h,\Theta}^k$). There holds $\bc = \bM_\Gamma \be$ with the $L^2(\Gamma_h)$-mass-matrix $\bM_\Gamma$ of $V_{h,\Theta}^k$.


In order to obtain a solvable linear system the compatibility condition must hold on the discrete level: $\int_{\Gamma_h} f_h \d{s_h} = \langle \bff , \be \rangle = 0$. Due to geometrical discretization errors it is not inherited from the corresponding property of the continuous problem \eqref{eq:weak-LB}. We proceed as suggested in Remark \ref{daterror}. Given an initial approximation $\tilde{f}$ on $\Gamma_h$ ($\tilde{\bff}$ with $\tilde{\bff}_i = \int_{\Gamma_h} \tilde{f} \varphi_i \d{s_h}$, $i\in\{1,..,n\}$), we define $f_h$ as in \eqref{extens} and let $ \bff = \tilde{\bff} - \frac{ \langle \tilde{\bff}, \be \rangle }{ \langle \bc , \be \rangle } \bc$.
Note that we have $f_h(v) = 0$ for every function $v$ which is constant on $\Gamma_h$, i.e. 
$\bff \in \mathrm{range}(\bS)$. 

To solve the constrained linear system we consider the uniquely solvable problem
\[
  \tilde \bS  \bu = \bff, \quad \text{with} ~~ \tilde \bS := \bS + \gamma \bc  \bc^T.
\]
Here, we choose $\gamma = ( \sum_{i=1}^n \mathrm{diag}(\bS)_i ) / ( \sum_{i=1}^n \bc_i^2 )$ to approximately match the scaling of both terms. 
$\tilde{\bS}$ is symmetric positive definite and the solution of this system is unique and fulfils the equation $\bS  \bu = \bff$ and the constraint $ \langle \bc , \bu \rangle = 0$. To solve the system we apply a standard conjugate gradient method with diagonal preconditioning.
\end{remark}

\subsection{Laplace--Beltrami equation on a toroidal surface} \label{sec:torus}

\begin{figure}[h!]
  \vspace*{-0.2cm}
  \includegraphics[trim=5cm 0.0cm 5cm 0.0cm, clip=true, 
                   height=0.25\textwidth]{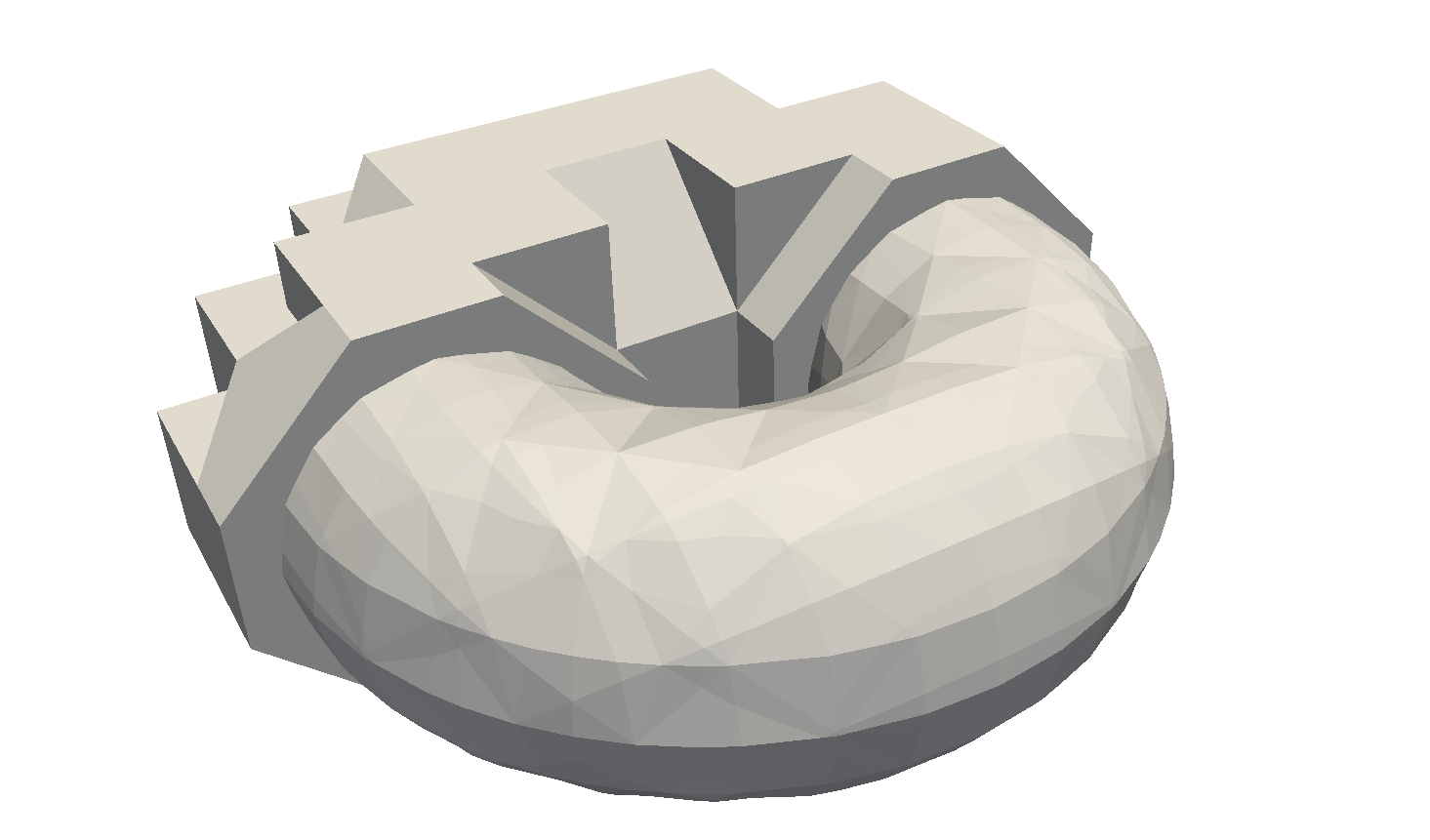} 
                   \hspace*{-0.7cm}
  \includegraphics[trim=5cm 0.0cm 5cm 0.0cm, clip=true, 
                   height=0.25\textwidth]{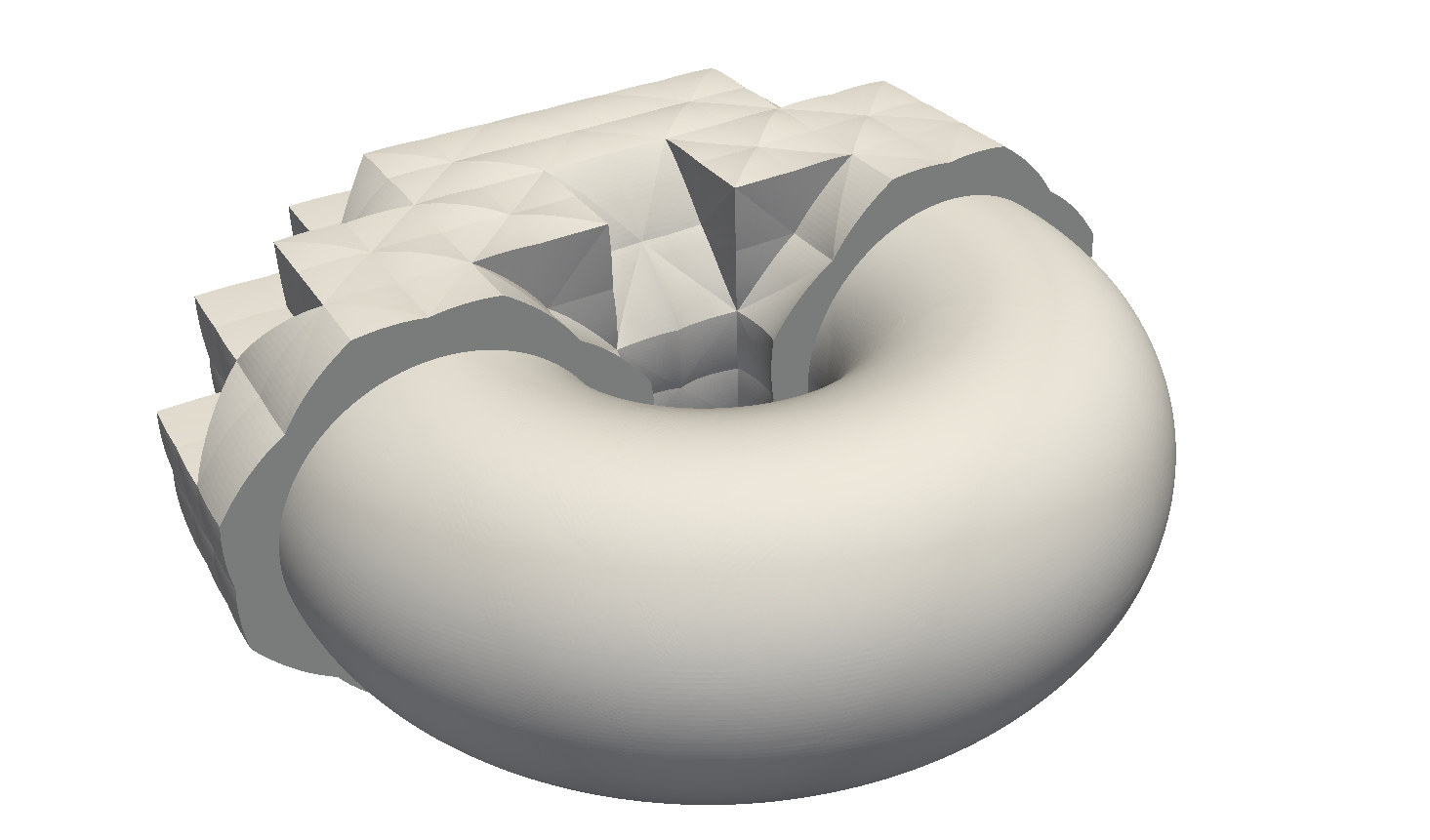} 
                   \hspace*{-0.7cm}
  \includegraphics[trim=5cm 0.0cm 5cm 0.0cm, clip=true, 
                   height=0.25\textwidth]{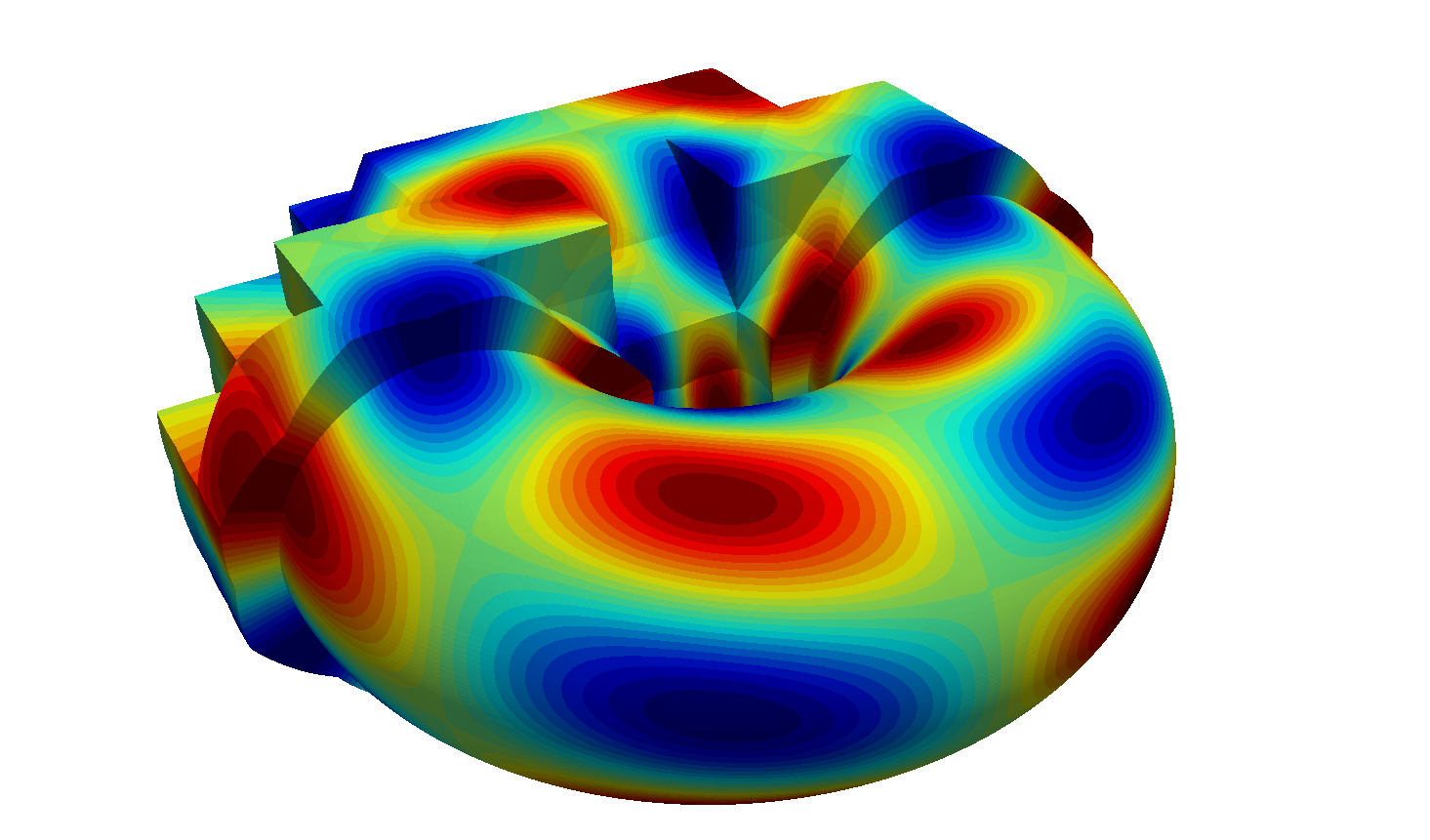} 
                   \hspace*{-0.7cm}
  \includegraphics[trim=0cm 0.0cm 0cm 0.0cm, clip=true, 
                   height=0.25\textwidth]{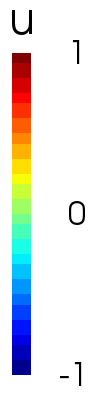} 
  \vspace*{-0.1cm}
  \caption{Numerical solution on the initial mesh for $k=3$ for the example in section \ref{sec:torus}: $\mT^\Gamma$ and $\Gammalin$ (left), $\Theta_h(\mT^\Gamma)$ and $\Gamma_h$ (center), $u_h \in \Vkz$ (right). }
  \label{fig:torus} 
\end{figure}

We consider an example from \cite{grande2014highordersjna} and apply the discretization described above with the normal derivative stabilization. The surface is a torus prescribed by the level set function $\phi$, $\Gamma = \{x \in \Omega~|~\phi(x) = 0\}$ with
\[
 \phi(x) = \left(x_3^2 + \left( \left(x_1^2+x_2^2\right)^\frac12 - R \right)^2\right)^\frac12 - r, \quad R=1, r=0.6.
\]
The surface is embedded in the domain $\Omega=[-2,2]^3$, and the solution is given as $u(x) = \sin(3 \varphi) \cos(3 \theta + \varphi)$ where $(\varphi,\theta)$ are the angles describing a surface parametrization, cf. \cite{grande2014highordersjna} for details. 
The right-hand side function $f$ is taken consistent with the solution $u$ and $\tilde{f}_h$ is chosen as the natural extension of this $f$. Then $f_h$ is constructed as described above, see remark \ref{sec:solvelinsys}.
Note that $u$ and $f$ have mean value zero on $\Gamma$ while $u_h$ and $f_h$ have mean value zero on $\Gamma_h$.
We start from a structured $16 \times 16 \times 16$ mesh ($h \approx \frac18$) and repeatedly apply uniform refinements (at the interface). 
In Figure \ref{fig:torus} the initial mesh is shown along with the surface approximations $\Gammalin$, $\Gamma_h$ and the discrete solution for $k=3$.
We investigate the behavior of the following quantities under mesh refinement. As a measure of the geometrical approximation quality we take $e_{dist} := \mathrm{dist}(\Gamma_h,\Gamma)$.
We further investigate the convergence of the errors 
$e_{L^2} := \Vert u^e - u_h \Vert_{L^2(\Gamma_h)}$, 
$e_{H^1}^t := \Vert \nabla_{\Gamma_h} (u^e - u_h) \Vert_{L^2(\Gamma_h)}$ and 
$e_{H^1}^n := \Vert \nabla u_h \cdot n \Vert_{L^2(\Gamma_h)}$. 
Here $u^e$ is the constant extension of $u$ along the normals of $\Gamma$.
In contrast to the stabilization term $s_h(\cdot,\cdot)$ 
the error measure $e_{H^1}^n$ is evaluated on the (discrete) surface $\Gamma_h$.
Finally, we also collect the number of CG iterations $N_{its}$ necessary to reduce the initial residual by a factor of $\num{1e-9}$.

We carry out the numerical experiment for the cases $\rho_s \sim h^{-1}$ and $\rho_s \sim h$, $k \in \{1,..,5\}$ and apply mesh refinements up to meshes with around a million unknowns.
In the numerical experiments we find that $\rho_s = h^{-1}$ gives much better results than $\rho_s = h$ in the sense that in the latter case we observe a strong dependence of the iteration number ($N_{its} \gtrsim k^2$) on the polynomial degree $k$. As a remedy we introduce a factor \emph{independent of $h$} into $\rho_s\sim h$. From a small test series we find that $\rho_s =   k^4 h$ gives results which are more robust with respect to variations in $k$.
At this point, we have no mathematical justification for the choice of the factor $k^4$. Note  that in our analysis  the dependence of the constants in the estimates on $k$  has not been considered. The results of the numerical experiments are displayed in Table \ref{rho_s1}.

\begin{table}[h!]
  \vspace*{-0.1cm}
  \footnotesize
  \hfill
  \begin{tabular}{l@{}r@{~~}l@{}r@{~~}l@{}r@{~~}l@{}r@{~}r}
    \toprule
    \multicolumn{9}{c}{$\rho_s \sim h^{-1}$} \vspace*{-0.02cm} \\
\midrule
$e_{dist}$&(eoc)&$e_{L^2}$&(eoc)&$e_{H^1}^t$&(eoc)&$e_{H^1}^n$&(eoc)&$N_{its}$ \\ 
\midrule
    \multicolumn{9}{l}{$k=1$ \quad ($912 - 900K$ unknowns)} \vspace*{-0.02cm} \\
    \midrule
\num{2.90286e-02} &              & \num{7.19179e-01} &              & \num{7.59399e+00} &              & \num{2.97248e+00} &              &       71  \\
\num{7.98578e-03} & ({\bf  1.9}) & \num{2.22706e-01} & ({\bf  1.7}) & \num{3.76102e+00} & ({\bf  1.0}) & \num{1.98229e+00} & ({\bf  0.6}) &      118  \\
\num{1.83402e-03} & ({\bf  2.1}) & \num{5.05149e-02} & ({\bf  2.1}) & \num{1.88546e+00} & ({\bf  1.0}) & \num{9.86263e-01} & ({\bf  1.0}) &      229  \\
\num{4.87394e-04} & ({\bf  1.9}) & \num{1.28923e-02} & ({\bf  2.0}) & \num{9.53279e-01} & ({\bf  1.0}) & \num{4.95408e-01} & ({\bf  1.0}) &      442  \\
\num{1.24949e-04} & ({\bf  2.0}) & \num{3.32666e-03} & ({\bf  2.0}) & \num{4.77926e-01} & ({\bf  1.0}) & \num{2.48855e-01} & ({\bf  1.0}) &      849  \\
\num{3.10248e-05} & ({\bf  2.0}) & \num{9.46600e-04} & ({\bf  1.8}) & \num{2.38365e-01} & ({\bf  1.0}) & \num{1.24845e-01} & ({\bf  1.0}) &     1652  \\
\midrule                                                                                                                                   
    \multicolumn{9}{l}{$k=2$ \quad ($5.3K - 5.3M$ unknowns)} \vspace*{-0.02cm} \\                                                                   
\midrule
\num{3.85000e-03} &              & \num{5.36388e-02} &              & \num{1.36773e+00} &              & \num{1.22060e+00} &              &      130  \\
\num{4.46928e-04} & ({\bf  3.1}) & \num{8.35991e-03} & ({\bf  2.7}) & \num{4.28053e-01} & ({\bf  1.7}) & \num{3.05451e-01} & ({\bf  2.0}) &      181  \\
\num{5.92903e-05} & ({\bf  2.9}) & \num{1.11528e-03} & ({\bf  2.9}) & \num{1.10183e-01} & ({\bf  2.0}) & \num{7.65697e-02} & ({\bf  2.0}) &      326  \\
\num{7.99525e-06} & ({\bf  2.9}) & \num{1.55728e-04} & ({\bf  2.8}) & \num{2.96749e-02} & ({\bf  1.9}) & \num{2.08708e-02} & ({\bf  1.9}) &      623  \\
\num{1.02274e-06} & ({\bf  3.0}) & \num{1.90417e-05} & ({\bf  3.0}) & \num{7.31939e-03} & ({\bf  2.0}) & \num{5.17451e-03} & ({\bf  2.0}) &     1178  \\
\num{1.26318e-07} & ({\bf  3.0}) & \num{2.36268e-06} & ({\bf  3.0}) & \num{1.81892e-03} & ({\bf  2.0}) & \num{1.28007e-03} & ({\bf  2.0}) &     2275  \\
\midrule                                                                                                                              
    \multicolumn{9}{l}{$k=3$ \quad ($16K - 4M$ unknowns)} \vspace*{-0.02cm} \\
    \midrule
\num{1.24052e-03} &              & \num{1.30372e-02} &              & \num{4.30460e-01} &              & \num{4.30498e-01} &              &      263  \\
\num{8.16528e-05} & ({\bf  3.9}) & \num{7.29942e-04} & ({\bf  4.2}) & \num{4.98292e-02} & ({\bf  3.1}) & \num{5.11916e-02} & ({\bf  3.1}) &      344  \\
\num{5.07904e-06} & ({\bf  4.0}) & \num{4.62651e-05} & ({\bf  4.0}) & \num{6.45915e-03} & ({\bf  2.9}) & \num{5.55089e-03} & ({\bf  3.2}) &      429  \\
\num{6.45444e-07} & ({\bf  3.0}) & \num{3.14062e-06} & ({\bf  3.9}) & \num{8.66504e-04} & ({\bf  2.9}) & \num{7.60004e-04} & ({\bf  2.9}) &      768  \\
\num{4.35992e-08} & ({\bf  3.9}) & \num{1.85714e-07} & ({\bf  4.1}) & \num{1.04391e-04} & ({\bf  3.1}) & \num{9.13136e-05} & ({\bf  3.1}) &     1420  \\
\midrule
    \multicolumn{9}{l}{$k=4$ \quad ($35K - 8.9M$ unknowns)} \vspace*{-0.02cm} \\ 
    \midrule
\num{6.51429e-04} &              & \num{1.44593e-03} &              & \num{6.53102e-02} &              & \num{1.20937e-01} &              &      528  \\
\num{1.11642e-05} & ({\bf  5.9}) & \num{5.04099e-05} & ({\bf  4.8}) & \num{4.85311e-03} & ({\bf  3.8}) & \num{5.99704e-03} & ({\bf  4.3}) &      600  \\
\num{4.34349e-07} & ({\bf  4.7}) & \num{1.79374e-06} & ({\bf  4.8}) & \num{3.22690e-04} & ({\bf  3.9}) & \num{3.49953e-04} & ({\bf  4.1}) &      681  \\
\num{1.59668e-08} & ({\bf  4.8}) & \num{8.19386e-08} & ({\bf  4.5}) & \num{2.70109e-05} & ({\bf  3.6}) & \num{2.50651e-05} & ({\bf  3.8}) &      945  \\
\num{5.45729e-10} & ({\bf  4.9}) & \num{2.58168e-09} & ({\bf  5.0}) & \num{1.49586e-06} & ({\bf  4.2}) & \num{1.53323e-06} & ({\bf  4.0}) &     1613  \\
\midrule                                                                                                                              
    \multicolumn{9}{l}{$k=5$ \quad ($66K - 1M$ unknowns)} \vspace*{-0.02cm} \\                                                     
    \midrule
\num{9.68761e-04} &              & \num{4.40866e-04} &              & \num{3.32631e-02} &              & \num{3.60224e-02} &              &     1071  \\
\num{1.18222e-06} & ({\bf  9.7}) & \num{6.04831e-06} & ({\bf  6.2}) & \num{6.06451e-04} & ({\bf  5.8}) & \num{8.39301e-04} & ({\bf  5.4}) &     1236  \\
\num{2.52701e-08} & ({\bf  5.5}) & \num{9.07531e-08} & ({\bf  6.1}) & \num{1.91119e-05} & ({\bf  5.0}) & \num{2.47323e-05} & ({\bf  5.1}) &     1312  \\
\num{7.26693e-10} & ({\bf  5.1}) & \num{2.40375e-09} & ({\bf  5.2}) & \num{7.59469e-07} & ({\bf  4.7}) & \num{9.86219e-07} & ({\bf  4.6}) &     1676  \\
    \bottomrule
  \end{tabular}
  \hfill
  \hfill
  \begin{tabular}{l@{}r@{~~}r}
    \toprule
    \multicolumn{3}{c}{$\rho_s \sim h$} \vspace*{-0.02cm} \\
    \midrule
    $e_{H^1}^n$&(eoc)&$N_{its}$ \\
    \midrule
    \vspace{-0.02cm}\\
    \midrule
    \num{5.45763e+00} &              &       69  \\
    \num{3.98106e+00} & ({\bf  0.5}) &      121  \\
    \num{3.10661e+00} & ({\bf  0.4}) &      248  \\
    \num{2.59565e+00} & ({\bf  0.3}) &      473  \\
    \num{2.39875e+00} & ({\bf  0.1}) &      937  \\
    \num{2.32129e+00} & ({\bf  0.0}) &     1872  \\
    \midrule
    \vspace{-0.02cm}\\
    \midrule
    \num{1.21806e+00} &              &      130  \\
    \num{4.01971e-01} & ({\bf  1.6}) &      192  \\
    \num{1.29864e-01} & ({\bf  1.6}) &      378  \\
    \num{4.53554e-02} & ({\bf  1.5}) &      730  \\
    \num{1.37695e-02} & ({\bf  1.7}) &     1543  \\
    \num{3.37812e-03} & ({\bf  2.0}) &     3118  \\
    \midrule
    \vspace{-0.02cm}\\
    \midrule
    \num{2.48577e-01} &              &      273  \\
    \num{4.93405e-02} & ({\bf  2.3}) &      335  \\
    \num{6.68864e-03} & ({\bf  2.9}) &      530  \\
    \num{1.17189e-03} & ({\bf  2.5}) &     1011  \\
    \num{1.71228e-04} & ({\bf  2.8}) &     2073  \\
    \midrule
    \vspace{-0.02cm}\\
    \midrule
    \num{8.44460e-02} &              &      482  \\
    \num{4.97260e-03} & ({\bf  4.1}) &      464  \\
    \num{3.49648e-04} & ({\bf  3.8}) &      680  \\
    \num{3.21935e-05} & ({\bf  3.4}) &     1261  \\
    \num{2.55372e-06} & ({\bf  3.7}) &     2582  \\
    \midrule
    \vspace{-0.02cm}\\
    \midrule
    \num{2.07080e-02} &              &      935  \\
    \num{6.89020e-04} & ({\bf  4.9}) &     1016  \\
    \num{2.28371e-05} & ({\bf  4.9}) &     1098  \\
    \num{1.05918e-06} & ({\bf  4.4}) &     1836  \\
    \bottomrule
  \end{tabular}
  \hfill
  \caption{Results for the example in section \ref{sec:torus} with $\rho_s = h^{-1}$ (left) and $\rho_s = hk^4$ (right).} 
  \label{rho_s1}
  \vspace*{-0.45cm}
\end{table}

As predicted in \eqref{resdist} we observe $\mathcal{O}(h^{k+1})$-convergence for the geometrical error measure $e_{dist}$. We note that the initial triangulation is sufficiently fine to guarantee the mesh regularity of the deformed meshes at all refinement levels.

With respect to the error measures $e_{H^1}^t$ and $e_{L^2}$ we only display the results for $\rho_s = h^{-1}$ in Table \ref{rho_s1} because the differences between the different stabilization scalings in those error measures are only marginal. For $e_{H^1}^t$
we observe $\mathcal{O}(h^k)$-convergence which is in agreement with the prediction of Theorem \ref{mainthm}. For $e_{L^2}$, we observe the optimal rate $\mathcal{O}(h^{k+1})$, but have no a priori analysis for this, yet.

The previous error measures are essentially unaffected by the choice of the stabilization scaling. This is different for the number of iterations $N_{its}$ and the error measure $e_{H^1}^n$. 
For $k=1$ we observe that $e_{H^1}^n$ does not convergence for $\rho_s\sim h$ while it converges with order one for $\rho_s \sim h^{-1}$. 
In the higher order case, $k\geq 1$, the difference in the results is much smaller. For both scalings we observe at least $e_{H^1}^n \lesssim h^{k-1/2} \rho_s^{-1/2}$. The results even indicate a convergence order $k$ in both cases, although this is more pronounced for $\rho_s \sim h^{-1}$ than for $\rho_s \sim h$.

The iteration counts for both scalings increase linearly with the mesh size for sufficiently fine meshes which is in agreement with the condition number bound in Theorem \ref{coro:condition-number}. On coarser grids and for increasing order $k$ the numbers of iterations stagnate before the asymptotic regime starts and the iteration counts grow linearly. 

\begin{remark}[No stabilization, $s_h(\cdot,\cdot) \equiv 0$] \rm
It is known that for $k=1$  stabilization is in general not necessary for satisfactory iteration numbers in the CG method, provided diagonal preconditioning is applied, cf. \cite{reusken10matrixprop}. Accordingly, we repeated the previous numerical experiment with $k=1$ and $\rho_s=0$. We obtain similar results for $e_{H^1}^t$ and $e_{L^2}$, whereas $e_{H^1}^n$ does not converge (with similar errors as for $k=1$ and $\rho_s \sim h$). The iteration counts are larger (95, 175, 360, 793, 1470, 2890), but also increase linearly with $h$.
In our experience, for moderate orders, $k=2,3$, a discretization with $\rho_s=0$  \emph{often}  yields results for $e_{H^1}^t$, $e_{L^2}$ and $N_{its}$ which are similar to those obtained with stabilization. However, there is no control on $e_{H^1}^n$ and, more importantly, \emph{sometimes} the linear solver fails to converge or the iteration numbers are very high (even with diagonal preconditioning). For even higher order, $k \geq 4$, in general the (diagonally preconditioned) CG solver does not converge for $\rho_s=0$.
\end{remark}

\section{Conclusion and outlook} \label{sec:conclusion}
We introduced and analyzed a higher order iso\-para\-metric trace FEM. The higher discretization accuracy is obtained by using an isoparametric mapping of the volume mesh, based on a high order approximation of the level set function. The resulting trace finite element method is easy to implement. We presented an error analysis of this method and derived optimal order $H^1(\Gamma)$-norm error bounds. A second main topic of this paper is a unified analysis of several stabilization methods for this class of surface finite element methods. The recently developed normal derivative volume stabilization method is analyzed.  This method is able to control the condition number of the stiffness matrix also for the case of higher order discretizations.
 
We mention a few topics which we consider to be of interest for future research. Firstly, the derivation of an optimal order $L^2$-error bound has not been investigated, yet. We think that most ingredients needed for such an anlysis are available from this paper, e.g. the $\mathcal{O}(h^{k+1})$-consistency-bound in Lemma~\ref{lem:conserr}. A second, much more challenging, topic is the extension of the higher trace finite element technique presented in this paper to the class of PDEs on \emph{evolving} surfaces. It may be possible to extend the isoparametric mapping technique to a space-time setting and then combine it with the trace space-time method for discretization of PDEs on evolving surfaces given in \cite{olshanskii14spacetime,olshanskii14spacetimeanalysis}. As a final topic we mention the extension of the higher order discretization technique presented in this paper to coupled bulk-surface problems. 
\appendix

\section{Proof of Lemma~\ref{lem:dh}} \label{sec:proof:lem:dh}
\begin{proof}
First we prove the bound in \eqref{eq:psihbound}.
For $T \in \TGamma$ we consider the function $F(x,y) = \mathcal{E}_T \phi_h (x + y G_h(x))-\hphi(x)$ for $(x,y) \in T \times (-\alpha_0 h, \alpha_0 h)$, with $G_h:= \nabla \phi_h$. 
From \cite[Lemma 3.2]{CLARH1} we know that there exists a $h_0 > 0$ so that for all $0< h < h_0$ the function $d_h(x) = y(x)$ solves $F(x,y(x)) = 0$ on $T$. 
Since $\hphi, \mathcal{E}_T \phi_h$ and $G_h$ are polynomials and hence 
$F \in C^{\infty}(T \times (-\alpha_0 h, \alpha_0 h))$ it follows from the implicit function theorem that $y \in C^{\infty}(T)$.
Due to $D^{\alpha} \hphi = 0$ for $|\alpha|>1$ we have $\Vert D^{\alpha} \hphi \Vert_{\infty,T} \lesssim \Vert \phi \Vert_{H^{2,\infty}(T)}$ independent of $\alpha$. Using the extended element $U(T) := \{x+w\mid x \in T, |w| \leq 2 \alpha_0 h \}$ and the continuity of the polynomial extension operator $\mathcal{E}_T$ we have with $l = |\alpha|\leq k+1$:
\begin{equation} \label{implbound} \begin{split}
  \Vert D^{\alpha}_{(x,y)} F \Vert_{\infty,T\times(-\alpha_0 h, \alpha_0 h)} &
  \lesssim \Vert \mathcal{E}_T \phi_h \Vert_{H^{l,\infty}(U(T))} \Vert G_h\|_{H^{l,\infty}(T)} \\
  & \lesssim \Vert \phi_h \Vert_{H^{l,\infty}(T)}\Vert\phi_h\Vert_{H^{l+1,\infty}(T)} \lesssim\Vert\phi\Vert_{H^{l+1,\infty}(T)}^2 \lesssim 1. 
\end{split} \end{equation}
Differentiating $F(x,y(x))=0$ yields, for $|\alpha|=1$:
\begin{equation} \label{impl1}
D^{\alpha} y(x) = - D_y F(x,y(x))^{-1} D_x^{\alpha} F(x,y(x)) = -A(x) D_x^{\alpha} F(x,y(x)).
\end{equation}
with $A(x) = S(x)^{-1}$, $S(x) = D_y F(x,y(x)) = \nabla \mathcal{E}_T \phi_h (x + y G_h(x))^T \nabla \phi_h \in [c_0,c_1]$ with $c_0,c_1 > 0$ independent of $h, x, T$. Differentiating $S(x) A(x) = 1$ yields 
\begin{equation} \label{impl2}
D^{\alpha} A(x) = - A(x)^2 D^{\alpha} S(x), \quad |\alpha|=1.
\end{equation}
From \eqref{impl1} and \eqref{impl2} we deduce that $| D^{\alpha} y(x)|,~|\alpha|=l$, can be bounded in terms of $\vert A(x) \vert$ and $| D^{\alpha}_{(x,y)} F(x,y(x))|,~|\alpha|\leq l$. Combining this with \eqref{implbound} gives the first bound in \eqref{eq:psihbound}.
From $\Vert G_h \Vert_{H^{l,\infty}(T)} \lesssim \Vert \phi_h \Vert_{H^{l+1,\infty}(T)} \lesssim \Vert \phi \Vert_{H^{l+1,\infty}(T)}\lesssim 1$
and the first bound we obtain the second bound in \eqref{eq:psihbound}.

For \eqref{eq:jumpbound} we consider an interior facet $F \in \mFGamma$ with neighboring tetrahedra denoted by $T_1,T_2\in \mTGamma$. 
We set $d_h^i = d_h|_{T_i}$ and $G_h^i = G_h|_{T_i}$ for $i=1,2$. As $\hphi$ is continuous we have
\begin{equation}\label{jumpe}
\mathcal{E}_{T_1} \phi_h(x+d_h^1(x) G_h^1(x)) - \mathcal{E}_{T_2} \phi_h(x+d_h^2(x) G_h^2(x)) = 0 \quad \text{for all}~ x \in F.
\end{equation}
Using \eqref{eq:lsetdist} we obtain for $x \in F$ and with $G:=\nabla \phi$,
\begin{align*}
  |d_h^1(x) - d_h^2(x)| & \sim | \phi(x+d_h^1(x)G(x)) - \phi(x+d_h^2(x)G(x)) | \\
   & \lesssim | \phi(x+d_h^1(x)G_h^1(x)) - \phi(x+d_h^2(x)G_h^2(x)) | \\
 & + \sum_{i=1}^2| \phi(x+d_h^i(x)G(x)) - \phi(x+d_h^i(x)G_h^i(x)) |. 
\end{align*}
For the sum we use the regularity of $\phi$ in $U$, \eqref{eq:lsetdist} and the estimates for $G_h^i - G$ (cf. \cite[Lemma 3.1]{CLARH1}): 
\[
| \phi(x+d_h^i(x)G(x)) - \phi(x+d_h^i(x)G_h^i(x)) | \lesssim |d_h^i(x)| \Vert G_h^i(x) - G(x) \Vert_2 \lesssim h^{k+2}.
\]
For the other term we use $y_i:=x+d_h^i(x)G_h^i(x)$ and \eqref{jumpe}:
\[
 |\phi(y_1)-\phi(y_2)| \leq |\phi(y_1)-\mathcal{E}_{T_1} \phi_h(y_1)| + |\phi(y_2)-\mathcal{E}_{T_2} \phi_h(y_2)|. \vspace*{-0.1cm}
\]
The two terms on the right-hand side can be bounded by $\mathcal{O}(h^{k+1})$ using Taylor expansion arguments, cf. \cite[Proof of Lemma 3.2]{CLARH1}, which concludes the proof of the first bound in \eqref{eq:jumpbound}. 

Finally, we consider $\jump{\Psi_h} = \jump{d_h G_h} = \jump{d_h} \average{G_h} + \average{d_h} \jump{G_h}$ (with $\average{a} := \frac{a_1+a_2}{2}$).
From \cite[Lemma 3.1]{CLARH1} and the assumed regularity of $\phi$ we have (uniform in $x$) $|\average{G_h}| \lesssim 1$, $|\jump{G_h}| \lesssim h^k$ and with the first bound in \eqref{eq:jumpbound} and \cite[Lemma 3.2]{CLARH1} we have $|\jump{d_h}| \lesssim h^{k+1}$ and $|\average{d_h}| \lesssim h^2$. Together this proves \eqref{eq:jumpbound}.
\end{proof}

\bibliographystyle{siam}
\bibliography{literature}

\end{document}